\def\url@leostyle{%
  \@ifundefined{selectfont}{\def\UrlFont{\sf}}{\def\UrlFont{\small\ttfamily}}}
\let\oldlabel=\label
\def\prellabel{\marginparsep=1em
    \def\label##1{\oldlabel{##1}\ifmmode\else\ifinner\else
         \marginpar{{\footnotesize\ \\ \tt
                    ##1}}\fi\fi}}
\def\codim{\operatorname{codim}}
\def\rank{\operatorname{rank}}
\def\Aut{\operatorname{Aut}}
\def\o{\operatorname{o}}
\def\int{{\operatorname{int}}}
\def\conv{\operatorname{conv}}
\def\BC{\operatorname{BC}}
\def\V{\operatorname{V}}
\def\lin{\operatorname{lin}}
\def\vertex{\operatorname{vert}}
\def\rank{\operatorname{rank}}
\def\codim{\operatorname{codim}}
\def\Im{\operatorname{Im}}
\def\Hom{\operatorname{Hom}}
\def\Pol{\mathbf{Pol}}
\def\surj{\operatorname{surj}}
\def\inj{\operatorname{inj}}
\def\RR{{\mathbb R}}
\def\ZZ{{\mathbb Z}}
\def\NN{{\mathbb N}}
\def\FF{{\mathbb F}}
\def\cN{{\mathcal N}}
\def\Aff{\operatorname{Aff}}
\let\epsilon=\varepsilon
\let\phi=\varphi
\let\theta=\vartheta
\newtheorem{lemma}{Lemma}[section]
\newtheorem{corollary}[lemma]{Corollary}
\newtheorem{theorem}[lemma]{Theorem}
\newtheorem{proposition}[lemma]{Proposition}
\theoremstyle{definition}
\begin{document}

\title[Vertex maps between $\triangle$, $\Box$, and $\Diamond$]{Vertex maps between $\triangle$, $\Box$, and $\Diamond$}

\author[J. Gubeladze]{Joseph Gubeladze}
\author[J. Love]{Jack Love}

\address{Department of Mathematics\\
         San Francisco State University\\
         1600 Holloway Ave.\\
         San Francisco, CA 94132, USA}
\email{soso@sfsu.edu; jlove@mail.sfsu.edu}

\thanks{Joseph Gubeladze was supported by NSF grants DMS-1000641 \& DMS-1301487}

\subjclass[2010]{Primary 52B05, 52B11, 52B12; Secondary 5E99, 18B99, 55U99}
\keywords{Polytope, affine map, category of polytopes, crosspolytope, cube, hom-polytope, vertex map, perturbation}

\maketitle

\begin{abstract}
We study the vertices of the polytopes of all affine maps (a.k.a. \emph{hom-polytopes}) between higher dimensional simplices, cubes, and crosspolytopes. Systematic study of general hom-polytopes was initiated in \cite{HOM}. The study of such vertices is the classical aspect of a conjectural homological theory of convex polytopes. One quickly runs into open problems even for simple source and target polytopes.  The vertices of $\Hom(\triangle_m,-)$ and $\Hom(-,\Box_n)$ are easily understood. In this work we describe the vertex sets of $\Hom(\Box_m,\triangle_n)$, $\Hom(\Diamond_m,\triangle_n)$, and $\Hom(\Diamond_m,\Diamond_n)$. The emergent pattern in our arguments is reminiscent of diagram chasing in homological algebra.
\end{abstract}

\section{Introduction}\label{Intro}

Convex polytopes serve as the main vehicle for a major part of algebraic combinatorics. A big part of the theory of convex polytopes studies affine properties of these objects, i.e., the properties that are invariant under affine transformations, as opposed to other properties such as projective, metric, discrete etc. In their turn, affine maps are for affine spaces (e.g., the affine hulls of polytopes) what linear maps are for vector spaces. Thus, on the one hand, the category $\Pol$ of convex polytopes and their affine maps is a natural habitat for polytopal combinatorics and, on the other hand, it resembles the linear category of finite dimensional vector spaces. The latter analogy can be promoted to the following semi-folklore fact: $\Pol$ enjoys a symmetric closed monoidal category structure, enriched on itself. In other words, the set of affine maps between two polytopes forms a polytope in its own right and there is another functorial construction -- the tensor product of polytopes -- satisfying the usual (right) conjunction with the hom-construction \cite{HOM,Valby}.

The importance of the basic fact that the hom-objects in $\Pol$ are polytopes is emphasized in the last pages of \cite{ZiPOL} and the well known software package \textsf{Polymake} \cite{GaJoPOLY} even has a special module to actually compute these objects in terms the source and target polytopes. Although \cite{GaJoPOLY} uses the name \emph{mapping polytopes}, our terminology of \emph{hom-polytopes} is more in line with the categorial point of view. The categorial perspective also suggests what the next natural steps in the process of fusing the polytopal and linear worlds should be. For instance, can one view the Sturmfels-Billera fiber polytopes \cite{BiStuFIBER}, which plays the central role in the theory of regular triangulations, as certain kernel objects in $\Pol$? More interestingly, is there a framework for the still elusive dual \emph{quotient} polytopes? These and other homological polytopal constructions still being crystallized, in this paper we focus on the basic challenge of determination of the vertices of $\Hom(P,Q)$ for classical $P$ and $Q$.

The first substantial treatment of hom-polytopes was given in \cite{HOM}, where basic properties were established. In particular, the paper \cite{HOM} emphasized on the importance of computing the vertices of hom-polytopes -- it was shown that this poses a serious problem even for supposedly tame polytopes (e.g., polygons) as the source and target objects. In this paper we continue the investigation of hom-polytopes along `simple' examples: the simplices, cubes, and crosspolytopes in arbitrary dimension. The emergent rich combinatorics, resulting from the categorial approach, is remarkable. But one can also trace patterns reminiscent of diagram chasing in homological algebra.

Our arguments involve phenomena in polytopes which are often observed for general polytopes and not just for the mentioned class. The most general principle employed in this paper is the following simple \emph{perturbation criterion}: an affine map $f:P\to Q$ is not a vertex of $\Hom(P,Q)$ if and only if  there is a family of affine maps $\{f_t:P\to Q\}_{t\in(-1,1)}$, \emph{smoothly} parameterized by $t$ so that $f_0=f$. Examples of involvement of general polytopes are Lemma \ref{newlemma} and  Theorem \ref{diamondtosimplex}(a).

Before describing the main results we recall the following well known fact (\cite[Section 2]{HOM},\cite[Section 9.4]{ZiPOL}). It explains why the determination of vertices of hom-polytopes is the first step in understanding the hom-polytopes:

\begin{theorem}\label{folklore}
Let $P\subset V$ and $Q\subset W$ be polytopes in their ambient vector spaces.
\begin{itemize}
\item[(a)] The set $\Hom(P,Q)$ naturally embeds as a polytope into the vector space of linear maps $\Hom(W\times\RR,V\times\RR)$.
\item[(b)] The facets of $\Hom(P,Q)$ are the subsets of the form
$$
H(v,F)=\{f\in\Hom(P,Q\}\ |\ f(v)\in F\},
$$
where $v\in P$ is a vertex and $F\subset Q$ is a codimension one face.
\item[(c)] $\dim(\Hom(P,Q))=\dim P\dim Q+\dim Q$.
\item[(d)] For every vertex $w\in Q$, the map $f:P\to Q$, $\Im f=\{w\}$, is a vertex of $\Hom(P,Q)$.
\end{itemize}
\end{theorem}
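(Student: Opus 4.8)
The plan is to deduce all four parts from a single device, the \emph{homogenization} of an affine map, together with the elementary fact that an affine map sends a convex combination of points to that same convex combination of their images. First I would dispose of the degenerate cases ($\dim P=0$, where $\Hom(P,Q)\cong Q$, and $\dim Q\le 0$, where everything is vacuous) and set them aside.

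For (a), recall that an affine map $f\colon P\to Q$ is the same datum as its unique affine extension $\aff(P)\to\aff(Q)$; homogenizing this extension --- and, if $P$ or $Q$ is not full-dimensional, fixing a linear complement --- realizes $\Hom(P,Q)$ as a subset of the linear-map space named in (a). The key observation is then that a point $f$ of that space lies in $\Hom(P,Q)$ if and only if $f(v)\in Q$ for every vertex $v$ of $P$: one direction is trivial, the other is exactly the convex-combination fact, since every point of $P$ is a convex combination of vertices. Writing $Q=\bigcap_F H_F^+$ as the intersection of the closed half-spaces supporting its facets $F$, this becomes $\Hom(P,Q)=\bigcap_{v,F}\{f:f(v)\in H_F^+\}$, a finite intersection of closed half-spaces, hence a polyhedron; it is bounded since $f$ is determined by the tuple $(f(v))_{v\in\vertex(P)}$, which lies in a product of copies of $Q$, and a bounded polyhedron is a polytope. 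This also yields (c): an affine map $\aff(P)\to\aff(Q)$ has exactly $\dim P\dim Q+\dim Q$ degrees of freedom, and $\Hom(P,Q)$ is full-dimensional in the corresponding affine subspace because the constant map onto a point in the relative interior of $Q$ sends the compact set $P$ into the relative interior of $Q$, so a whole neighbourhood of that map still carries $P$ into $Q$.

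For (b), the half-space description shows at once that every facet of $\Hom(P,Q)$ is contained in some $H(v,F)$ and that each $H(v,F)$ is a face; the substance is the converse, namely that no inequality ``$f(v)\in H_F^+$'' is redundant, so that each $H(v,F)$ really is a facet. Given a vertex $v$ of $P$ and a facet $F$ of $Q$, I would choose $w_F$ in the relative interior of $F$, a point $w_0$ in the relative interior of $Q$, and --- using that $v$ is a vertex --- an affine functional $\ell$ on $\aff(P)$ with $\ell(v)=0$, $\ell\ge 0$ on $P$, and $\ell>0$ at every other vertex of $P$. Then for all sufficiently small $\epsilon>0$ the affine map $f(x)=(1-\epsilon\ell(x))\,w_F+\epsilon\ell(x)\,w_0$ lies in $\Hom(P,Q)$, sends $v$ into the relative interior of $F$, and sends every other vertex of $P$ into the relative interior of $Q$; hence $f$ meets the constraint attached to $(v,F)$ with equality and every other constraint strictly, which forces that constraint to be facet-defining.

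For (d), I would argue directly from the extreme-point characterization of vertices: if the constant map $f\equiv w$ were $\tfrac12(f_1+f_2)$ with $f_1,f_2\in\Hom(P,Q)$, then for every vertex $v$ of $P$ we would have $w=\tfrac12(f_1(v)+f_2(v))$ with $f_1(v),f_2(v)\in Q$, and since $w$ is an extreme point of $Q$ this forces $f_1(v)=f_2(v)=w$; as an affine map is determined by its values on the vertices, $f_1=f_2=f$. I expect the only step with genuine content to be the non-redundancy argument in (b) --- producing the ``bent'' map $f$ and verifying that it meets exactly one of the defining half-spaces --- while (a), (c), and (d) reduce to bookkeeping around the homogenization and the convexity of $Q$.
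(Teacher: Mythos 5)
Your proposal is correct, but a comparison with ``the paper's proof'' is not quite possible: the paper does not prove Theorem \ref{folklore} at all, it records it as a known fact with references to \cite{HOM} and \cite{ZiPOL}, and only sketches the underlying embedding (the maps $\theta_\pi$ and $\theta$) in its Preliminaries. Your argument is the standard self-contained one and is consistent with that sketch: you homogenize to land in a space of linear maps, observe that $f\in\Hom(P,Q)$ iff $f(v)\in Q$ for every vertex $v$ of $P$, and thereby present $\Hom(P,Q)$ as a bounded intersection of the half-spaces ``$f(v)\in H_F^+$''. From there, full-dimensionality via a neighborhood of a constant map at a relative interior point of $Q$ gives (c); the ``bent'' map $f(x)=(1-\epsilon\ell(x))w_F+\epsilon\ell(x)w_0$, which meets the $(v,F)$-constraint with equality and all others strictly, shows each constraint is irredundant, and combined with full-dimensionality this gives (b) (every facet lies on the boundary hyperplane of one of the finitely many constraints, and each such set $H(v,F)$ is a facet); the extreme-point argument for the constant map at a vertex $w$ gives (d). All steps check out; the small points you should make explicit are that $\epsilon$ is chosen so that $\epsilon\ell\le 1$ on the compact set $P$, that a proper convex combination of $w_0\in\int(Q)$ with any point of $Q$ stays in $\int(Q)$, and that the facet-defining conclusion in (b) uses the full-dimensionality established in (c). One cosmetic discrepancy is not your doing: the theorem as stated embeds $\Hom(P,Q)$ into $\Hom(W\times\RR,V\times\RR)$, whereas your homogenization (like the paper's own Section \ref{Preliminaries}) naturally produces the covariant $\Hom(V\times\RR,W\times\RR)$; the two differ by a transpose/duality identification and nothing in the rest of the paper depends on the choice.
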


\medskip The main results in this paper, derived from explicit smooth perturbations of affine maps, are as follows.

\smallskip\noindent$\centerdot$ In Section \ref{cubetosimplex} we show that every vertex $f\in\Hom(\Box_m,\triangle_n)$ maps the $m$-cube $\Box_m$ onto either a vertex or an edge of the $n$-simplex $\triangle_n$. In particular, $\Hom(\Box_m,\triangle_n)$ has $(n+1)(mn+1)$ vertices.

\smallskip\noindent$\centerdot$ In Section \ref{Diamondtosimplex} it is shown that, for the $m$-dimensional crosspolytope $\Diamond_m$ and an $n$-dimensional polytope $P$, every vertex $f\in\Hom(\Diamond_m,P)$ with $\dim(\Im f)=n$ admits an $n$-dimensional sub-crosspolytope $\Diamond\subset\Diamond_m$ such that $f|_{\Diamond}$ is a vertex of $\Hom(\Diamond,P)$. Furthermore, we have a complete geometric description of the vertices $f\in\Hom(\Diamond_m,\triangle_n)$.  These results lead to nontrivial lower and upper estimates for the number of vertices of $\Hom(\Diamond_m,\triangle_n)$.

\smallskip\noindent$\centerdot$ Section \ref{Diamondtodiamond} focuses on the polytope $\Hom(\Diamond_m,\Diamond_n)$. In view of Section \ref{Diamondtosimplex}, the only new situation arises when the image of a vertex map $f\in\Hom(\Diamond_m,\Diamond_n)$ meets the interior of $\Diamond_n$. The main result here is that every such $f$ maps the center of $\Diamond_m$ to that of $\Diamond_n$. As a consequence, we obtain nontrivial estimates for the number of vertices of $\Hom(\Diamond_m,\Diamond_n)$.

\smallskip Since the functors $\Hom(\triangle_m,-)$ and $\Hom(-,\Box_n)$ are well understood (Proposition \ref{calculus}), the only remaining open case is $\Hom(\Box_m,\Diamond_n)$. Our \textsf{Polymake} computations show that an easy lower bound for the number of vertices of $\Hom(\Box_m,\triangle_n)$ is far from being optimal (Section \ref{Boxtodiamond}).

Based on this work one can speculate that among the regular polytopes, the Platonic solids and regular 4-polytopes are the most challenging source/target polytopes for determination of the vertex maps.

\section{Preliminaries}\label{Preliminaries}

Our references on general convex polytopes are  \cite[Ch.1]{KRIPO} and \cite{ZiPOL}. Yet for the reader's convenience below we encapsulate some basic definitions and fix notation, which are not always identical in the two sources.

\subsection{Affine spaces}\label{affinespaces} All our vector spaces are real and finite dimensional. An \emph{affine space} is a translate of a vector subsapcce, i.e., a subset of the form $H=x+V'\subset V$, where $V$ is a vector space, $V'\subset V$ is a subspace, and $x\in V$. A map between two affine spaces $f:H_1\to H_2$ is an \emph{affine map} if $f$ respects barycentric coordinates or, equivalently, $f$ maps polytopes to polytopes and parallel affine subspaces to parallel subspaces, possibly of lesser dimension.

\medskip For a subset $X\subset\RR^n$, denote by:

\noindent $\centerdot$ $\RR X$ the linear span of $X$,

\noindent $\centerdot$ $\conv(X)$ the convex hull of $X$,

\noindent $\centerdot$ $\Aff(X)$ the affine hull of $X$,

\noindent $\centerdot$ $\lin(X)$ the homogenization of $\Aff(X)$, i.e.,  the linear subspace $\Aff(X)-x\subset\RR^n$ where $x$ is some (equivalently, any) element $x\in X$.

\medskip The set of affine maps $H_1\to H_2$ between two affine spaces will be denoted by $\Aff(H_1,H_2)$. We put $\rank f=\dim f(H_1)$.

Let $H_1\subset V_1$ and $H_2\subset V_2$ be affine subspaces in their ambient vector spaces. Upon fixing an affine surjective map $\pi:V_1\to H_1$, which restricts to the identity map on $H_1$, we get an injective map
$$
\theta_\pi:\Aff(H_1,H_2)\to\Aff(V_1,V_2),\quad f\mapsto \iota\circ f\circ\pi,
$$
where $\iota:H_2\hookrightarrow V_2$ is the inclusion map. We also have the embedding into the space of linear maps:
\begin{align*}
\theta:\Aff(V_1,V_2)&\to\Hom(V_1\times\RR,V_2\times\RR),\\
&(\theta(f))(x,c)=(cf(c^{-1}x),c),\quad(\theta(f))(x,0)=(x,0),\\
&\qquad\qquad\qquad\qquad\qquad\qquad\qquad\quad x\in V_1,\quad c\in\RR\setminus\{0\}.\\
\end{align*}
The composite map $\theta\circ\theta_\pi$ identifies $\Aff(H_1,H_2)$ with an affine subspace of $\Hom(V_1\times\RR,V_2\times\RR)$ and the induced convexity notion in $\Aff(H_1,H_2)$ is independent of the choice of $\pi$. This affine embedding is implicit in Theorem \ref{folklore}(a).

Any full rank subset of a vector space contains a basis. By dualizing we get

\begin{lemma}\label{helly}
For two natural numbers $n\le m$ and a system of linear subspaces $V_1,\ldots,V_m\subset\RR^n$ we have the equivalence
\begin{align*}
V_1\cap\cdots\cap V_m=0\ \Longleftrightarrow\ \exists\{i_1,\ldots,i_n\}\subset\{1,\ldots,m\}\ \ V_{i_1}\cap\cdots\cap V_{i_n}=0.
\end{align*}
\end{lemma}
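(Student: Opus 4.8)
The plan is to reduce everything to the elementary fact quoted just before the statement — that a ``full rank'' (i.e.\ spanning) subset of a vector space contains a basis — applied to the dual space $(\RR^n)^*$. First I would dispose of the easy implication ``$\Longleftarrow$'': if $V_{i_1}\cap\cdots\cap V_{i_n}=0$ for some $n$-element index set, then $V_1\cap\cdots\cap V_m$ is contained in this intersection and hence is $0$ as well; this direction needs no hypothesis relating $m$ and $n$.

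For the forward implication ``$\Longrightarrow$'' I would pass to annihilators. Writing $U^\perp\subset(\RR^n)^*$ for the annihilator of a subspace $U\subset\RR^n$, the standard identities $\bigl(\bigcap_j U_j\bigr)^\perp=\sum_j U_j^\perp$ and $(U^\perp)^\perp=U$ translate the hypothesis $V_1\cap\cdots\cap V_m=0$ into $V_1^\perp+\cdots+V_m^\perp=(\RR^n)^*$. Equivalently, the union $\bigcup_{j=1}^m V_j^\perp$ spans the $n$-dimensional space $(\RR^n)^*$.

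Now I apply the quoted fact: this spanning set contains a basis $u_1,\ldots,u_n$ of $(\RR^n)^*$. Each $u_k$ lies in some $V_{j_k}^\perp$; the indices $j_1,\ldots,j_n$ need not be distinct, but since $m\ge n$ I can enlarge $\{j_1,\ldots,j_n\}$ to an $n$-element subset $\{i_1,\ldots,i_n\}\subset\{1,\ldots,m\}$. Then $V_{i_1}^\perp+\cdots+V_{i_n}^\perp$ contains $u_1,\ldots,u_n$, hence equals $(\RR^n)^*$, and dualizing back yields $V_{i_1}\cap\cdots\cap V_{i_n}=0$, as desired.

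I do not expect a genuine obstacle here; the only point requiring a little care is the bookkeeping in the last step, where the basis extracted from $\bigcup_j V_j^\perp$ may be drawn from fewer than $n$ of the subspaces $V_j^\perp$, so one must pad the index set up to size exactly $n$ — and this padding is precisely where the assumption $n\le m$ enters. Everything else is the routine annihilator dictionary.
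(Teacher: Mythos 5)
Your proof is correct and is exactly the argument the paper intends: the lemma is stated right after the remark ``Any full rank subset of a vector space contains a basis. By dualizing we get,'' and your annihilator translation ($V_1\cap\cdots\cap V_m=0$ iff $\sum_j V_j^\perp=(\RR^n)^*$), extraction of a basis from $\bigcup_j V_j^\perp$, and padding of the index set using $n\le m$ just spells out that one-line dualization. Nothing is missing.
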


The standard basis vectors of $\RR^n$ are denoted by $e_1,\ldots,e_n$.

For the dual spaces we will make the identification $(\RR^n)^{\o}=\RR^n$, so that the pairing  $(\RR^n)^{\o}\times\RR^n\to\RR$ becomes the dot-product
$(x_1,\ldots,x_n)\cdot(y_1,\ldots,y_n)=x_1y_1+\cdots+x_ny_n$.

\subsection{Polytopes} A \emph{polytope} always means a convex polytope in an ambient vector (or affine) space. An \emph{affine map} between two polytopes is the restriction of an affine map between the ambient spaces. We write $P\cong Q$ if $P$ and $Q$ are isomorphic polytopes in $\Pol$.

A \emph{face projection} of a polytope $P$ refers to a surjective map in $\Pol$, representing a parallel projection from $P$ along the affine hull of a face of $P$.

\medskip For two polytopes $P$ and $Q$ let:

\noindent $\centerdot$ $\Aff(P,Q):=\Aff(\Hom(P,Q))\quad(=\Aff(\Aff(P),\Aff(Q)))$;

\noindent $\centerdot$ an element $f\in\Hom(P,Q)$ be called a \emph{vertex map} if $f$ is a vertex of $\Hom(P,Q)$;

\noindent $\centerdot$ $\rank f$ denote the rank of the affine extension of $f$ to  $\Aff(P,Q)$;

\noindent $\centerdot$ $\vertex(P,Q)$ denote the set of vertices of $\Hom(P,Q)$;\footnote{In \cite{HOM} the vertex set is denoted by $\vertex(\Hom(P,Q))$.}

\noindent $\centerdot$ $\vertex^{(k)}(P,Q)$ denote the set of rank $k$ elements of $\vertex(P,Q)$;

\noindent $\centerdot$ $\FF(P)$ the set of \emph{facets} (i.e., the faces of dimension $\dim P-1$) of $P$;

\noindent $\centerdot$ $\int(P)$ the interior of $P$ (relative to $\Aff(P)$);

\noindent $\centerdot$ $\partial P=P\setminus\int(P)$ -- the boundary of $P$.

\medskip If, additionally, $P\subset\RR^n$ and $0\in\int(P)$, denote by:

\noindent $\centerdot$ $P^{\o}$ the \emph{dual polyhedron}, i.e., $P^{\o}=\{x\in\RR^n\ |\ x\cdot y\le1\ \text{for all}\ y\in P\}$; $P^{\o}$ is a polytope iff $\dim P=n$; in that case $\dim P^{\o}=n$ (\cite[Section 1.B]{KRIPO},\cite[Section 2.3]{ZiPOL});

\noindent $\centerdot$ $\Diamond(P)$ the bi-pyramid $\conv(P,\pm e_{n+1})\subset\RR^{n+1}$, where $\RR^n$ is viewed as the hyperplane in $\RR^{n+1}$ of the first $n$-coordinates.

\medskip The standard $n$-simplex, $n$-cube, and $n$-crosspolytope are defined as follows:
\begin{align*}
&\triangle_n=\conv(0,e_1,\ldots,e_n),\\
&\Box_n=\conv\big((a_1,\ldots,a_n)\ |\ a_i=\pm1\big),\\
&\Diamond_n=\conv(\pm e_1,\ldots,\pm e_n).
\end{align*}

In particular, $\Box_n$ and $\Diamond_n$ are dual to each other.

\medskip For a polytope $P$, the group of its automorphisms in $\Pol$ will be denoted by $\Aut(P)$. The $n$-th \emph{hyperoctahedral group} is
$$
\BC_n=\Aut(\Box_n)=\Aut(\Diamond_n).
$$
One has $|\BC_n|=2^nn!$ and the center of $\BC_n$ is isomorphic to $(\ZZ_2)^n$ (\cite[Section 7.6]{Coxeter}). Also, we think of $\Aut(\triangle_n)$ as the $(n+1)$-st permutation group $S_{n+1}$.

\subsection{Cones} For generalities on cones see \cite[Chapter 1]{KRIPO} and \cite[Chapter 1]{ZiPOL}.

The set of nonnegative reals is denoted by $\RR_+$. For a subset $X\subset\RR^n$, the set $\RR_+ X$ (the set of non-negative real linear combinations of finitely many elements of $X$) is called the \emph{conical hull} of $X$. A \emph{cone} means a finite polyhedral pointed cone, i.e., a subset of the form $C=\RR_+x_1+\cdots+\RR_+x_k\subset\RR^n$ for some finite set $\{x_1,\ldots,x_n\}\subset\RR^n$ and containing no non-zero linear subspace. The \emph{dual conical set} $C^{\o}=\{x\in\RR^n\ |\ x\cdot y\ge0\ \text{for all}\ y\in C\}\subset\RR^n$ is a cone iff $\dim C=n$. In that case $C^{\o}$ is called the \emph{dual cone} for $C$.

Let $P\subset\RR^n$ be a polytope and $v\in P$ a vertex. The \emph{affine corner cone of $P$ at $v$} is the shifted cone $v+\RR_+(P-v)$. If $\dim P=n$ then the \emph{normal fan of $P$} is the fan $\cN(P)$ in $\RR^n$ whose maximal cones are the duals of the \emph{corner cones} of $P$, i.e., the maximal cones of $\cN(P)$ are of the form $(\RR_+(P-v))^{\o}$, where $v$ runs over $\vertex(P)$. For generalities on normal fans see \cite[Capter 1]{KRIPO}.

For two $n$-dimensional polytopes $P,Q\subset\RR^n$, we have $\cN(P)=\cN(Q)$ if and only if $P$ and $Q$ are of same combinatorial type and the corresponding corner cones are equal if and only if $P$ and $Q$ have same combinatorial types and the corresponding faces are parallel.

\subsection{Some facts on affine maps}

Every map $f:P\to Q$ in $\Pol$ factors as follows
\[
\begin{tikzpicture}[description/.style={fill=white,inner sep=2pt}]
\matrix (m) [matrix of math nodes, row sep=2em,
column sep=2em, text height=1.5ex, text depth=0.25ex]
{ P & & Q \\
& R & \\ };
\path[->,font=\scriptsize]
(m-1-1) edge node[auto] {$ f $} (m-1-3)
edge node[auto, swap] {$ g $} (m-2-2)
(m-2-2) edge node[auto, swap] {$ h $} (m-1-3);
\end{tikzpicture}
\]
where $g$ is surjective and $h$ is injective. Moreover, any two such factorizations fit into a unique commutative diagram:

\[
\begin{tikzpicture}[description/.style={fill=white,inner sep=1.5pt}]
\matrix (m) [matrix of math nodes, row sep=2em,
column sep=2em, text height=1.5ex, text depth=0.25ex]
{ & R & \\
 P & & Q \\
& R' & \\ };
\path[->,font=\scriptsize]
(m-2-1) edge node[auto, swap] {} (m-3-2)
edge node[auto] {} (m-1-2)
(m-1-2) edge node[auto, swap] {$\cong$} (m-3-2)
edge node[auto] {} (m-2-3)
(m-3-2) edge node[auto, swap] {} (m-2-3);
\end{tikzpicture}
\]

We let $f_{\surj}$ and $f_{\inj}$ denote any representative of such factorizations: $f=f_{\inj}\circ f_{\surj}$.

\begin{proposition}\label{calculus1} Let $f:P\to Q$ be an affine map of polytopes.
\begin{itemize}
\item[(a)] If $f(\vertex(P))\subset\vertex(Q)$ then $f\in\vertex(P,Q)$.
\item[(b)] If $f\in\vertex(P,Q)$ then $f_{\surj}\in\vertex(P,\Im f)$ and $f_{\inj}\in\vertex(\Im f,Q)$.
\item[(c)] Assume $\rank f=1$. Then $f\in\vertex(P,Q)$ if and only if $f$ is a facet projection of $P$ onto an edge or a diagonal of $Q$.
\end{itemize}
\end{proposition}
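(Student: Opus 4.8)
The plan is to deduce all three parts from two standard facts: a point of a polytope is a vertex if and only if it admits no nonconstant smooth perturbation inside the polytope (equivalently, it is not the midpoint of a nondegenerate segment in the polytope), and an affine map on $P$ is determined by its restriction to $\vertex(P)$. For (a): if $f=\tfrac12(g+h)$ with $g,h\in\Hom(P,Q)$, then for each $v\in\vertex(P)$ the point $f(v)\in\vertex(Q)$ is an extreme point of $Q$, which forces $g(v)=h(v)=f(v)$; since $\vertex(P)$ affinely spans $\Aff(P)$ we conclude $g=h=f$. For (b): fix a factorization $f=f_{\inj}\circ f_{\surj}$. If $f_{\surj}=\tfrac12(g+h)$ in $\Hom(P,\Im f)$, then $f=\tfrac12(f_{\inj}g+f_{\inj}h)$, so $f_{\inj}g=f_{\inj}h=f$ (as $f$ is a vertex), and injectivity of $f_{\inj}$ gives $g=h=f_{\surj}$; symmetrically, if $f_{\inj}=\tfrac12(g+h)$ in $\Hom(\Im f,Q)$, then $gf_{\surj}=hf_{\surj}=f$, and surjectivity of $f_{\surj}$ gives $g=h=f_{\inj}$.

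For the ``if'' direction of (c), suppose $f$ is the parallel projection of $P$ along $\Aff(F)$ for a facet $F\in\FF(P)$, with image an edge or diagonal $[w,w']$ of $Q$ (so $w\neq w'\in\vertex(Q)$); say $f(F)=\{w\}$, and put $F'=f^{-1}(w')$, a proper face of $P$ disjoint from $F$. Because $F$ is a facet, $\vertex(F)\cup\vertex(F')$ affinely spans $\Aff(P)$. If $f=\tfrac12(g+h)$ in $\Hom(P,Q)$, then, the vertices $w,w'$ being extreme points of $Q$, the argument of (a) forces $g=h=f$ on $\vertex(F)\cup\vertex(F')$, hence on all of $P$; so $f\in\vertex(P,Q)$.

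For the ``only if'' direction, let $f\in\vertex(P,Q)$ with $\rank f=1$. By (b), $f_{\surj}\in\vertex(P,\Im f)$ and $f_{\inj}\in\vertex(\Im f,Q)$, and $I:=\Im f$ is a segment. Evaluating an affine map $I\to Q$ at the two endpoints of $I$ identifies $\Hom(I,Q)$ with $Q\times Q$, whose vertices are the pairs of vertices of $Q$; since $f_{\inj}$ is injective, $\Im f=[w,w']$ for distinct $w,w'\in\vertex(Q)$, i.e.\ an edge or a diagonal of $Q$. It remains to see that the surjective vertex $f_{\surj}\colon P\to I\cong[0,1]$ is a facet projection. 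By the perturbation criterion, $f_{\surj}$ is a vertex of $\Hom(P,[0,1])$ exactly when no nonzero affine functional on $\Aff(P)$ vanishes on $S:=\{v\in\vertex(P):f_{\surj}(v)\in\{0,1\}\}$, i.e.\ exactly when $S$ affinely spans $\Aff(P)$. Here $S=\vertex(G_0)\cup\vertex(G_1)$, where $G_0=f_{\surj}^{-1}(0)$ and $G_1=f_{\surj}^{-1}(1)$ are the faces of $P$ on which $f_{\surj}$ attains its minimum and maximum; these lie in parallel hyperplanes of $\Aff(P)$ of dimension $\dim P-1$, so $S$ spans $\Aff(P)$ precisely when $\dim\bigl(\lin(G_0)+\lin(G_1)\bigr)=\dim P-1$. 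One then reads off from the combinatorial structure of the source polytope that one of $G_0,G_1$ must be a facet, so $f_{\surj}$ is the parallel projection of $P$ along its affine hull, and $f=f_{\inj}\circ f_{\surj}$ is a facet projection of $P$ onto the edge or diagonal $[w,w']\subset Q$.

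The step I expect to be the main obstacle is this last one in (c): the perturbation computation delivers only the two parallel ``end faces'' $G_0,G_1$ with $\lin(G_0)+\lin(G_1)$ of codimension one in $\lin(P)$, and passing from this to the assertion that one of them is an actual facet of $P$ is where the specific structure of the source polytope enters. For $\Box_m$ and $\Diamond_m$ this is a short calculation -- an affine functional whose minimizing face and maximizing face together affinely span can depend on only one coordinate direction -- and these are exactly the cases needed in the later sections. The remaining parts of the proposition are routine extreme-point bookkeeping.
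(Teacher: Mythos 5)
Your treatment of (a) and (b) is correct, and so is the ``if'' half of (c); note that the paper's own proof of this proposition is pure citation (the perturbation criterion of \cite[Section 4]{HOM} for (a), \cite[Theorem 4.1(1)]{HOM} for (b), and \cite[Corollary 4.3]{HOM} for (c)), so your self-contained extreme-point arguments are a reasonable, more elementary substitute for those references. The genuine gap is exactly the step you flag in the ``only if'' half of (c). What you actually prove there is the criterion: $f$ of rank $1$ is a vertex map if and only if $\Im f=[w,w']$ with $w\neq w'\in\vertex(Q)$ and $\vertex(G_0)\cup\vertex(G_1)$ affinely spans $\Aff(P)$, where $G_0,G_1$ are the preimages of the endpoints. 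The passage from this to ``one of $G_0,G_1$ is a facet'' cannot be ``read off'' for a general source polytope, because it is false in general: take $P=\triangle_3=\conv(0,e_1,e_2,e_3)$ and the map sending $0,e_1$ to one vertex $w$ and $e_2,e_3$ to another vertex $w'$ of $Q$ (say onto an edge of $Q=\triangle_2$). By your own part (a) this is a rank-$1$ vertex map, its end-faces are the two opposite edges $[0,e_1]$ and $[e_2,e_3]$, and its fibers are parallel to $\mathrm{span}(e_1,e_2-e_3)$, which is not the linear span of any face of $\triangle_3$; so it is not a facet projection. Hence the last step of your argument is not merely an obstacle but an impossibility at the stated level of generality, and your proposal proves the span criterion rather than statement (c) as written.

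The constructive way to read this: the span criterion is precisely what \cite[Corollary 4.3]{HOM} supplies, and it is all the paper uses downstream, where the sources are $\Box_m$ and $\Diamond_m$. For those two families the criterion does force a facet projection: for $\Box_m$ the min and max faces are parallel subcubes with $\lin(G_0)=\lin(G_1)$ of dimension $m-k$ ($k$ the number of coordinates the functional depends on), so spanning forces $k=1$; for $\Diamond_m$ the end-faces are a pair of antipodal faces, and spanning forces them to be opposite facets, i.e.\ the functional has all coordinates of equal nonzero absolute value (not, as your parenthetical suggests, dependence on a single coordinate). So your restriction to $\Box_m$ and $\Diamond_m$ is the right repair and covers every later use of (c), but it should be stated as such: the clean general statement is the affine-span criterion, with ``facet projection'' a correct reformulation only for these special sources, and your proof should either be rephrased to prove that criterion or the reduction to a facet should be carried out explicitly in the two cases rather than asserted for general $P$.
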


\begin{proof}
(a) follows from the perturbation criterion $(\text{pc}_1)$ in \cite[Section 4]{HOM}. (See Lemma \ref{perturbation} below.)

(b) is \cite[Theorem 4.1(1)]{HOM}.

(c) follows from \cite[Corollary 4.3]{HOM}
\end{proof}

Examples showing that the implication (b) can not be reversed are given in \cite[Section 5]{HOM}.

\begin{proposition}\label{calculus} Let $n$ be a natural numbers and $P$ be a polytope.
\begin{itemize}
\item[(a)] $\Hom(\triangle_n,P)\cong P^{n+1}$.
\item[(b)] If $P\subset\RR^m$, $\dim P=m$ and $P$ is centrally symmetric w.r.t. the origin then $\Hom(P,\Box_n)\cong\Diamond(P^{\o})^n$.
\end{itemize}
\end{proposition}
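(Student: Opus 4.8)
The plan is to prove the two isomorphisms separately, in each case by writing down an explicit affine map and checking it is a bijection in $\Pol$ (i.e.\ an affine isomorphism of polytopes).

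For part (a): an affine map $f:\triangle_n\to P$ is uniquely determined by the images of the $n+1$ vertices $0,e_1,\dots,e_n$, and conversely \emph{any} assignment of these $n+1$ vertices to points of $P$ extends to an affine map, since $\triangle_n$ is a simplex (its vertices are affinely independent). So the evaluation map $\Hom(\triangle_n,P)\to P^{n+1}$, $f\mapsto(f(0),f(e_1),\dots,f(e_n))$, is a bijection. I would then check it is affine in both directions. In one direction: each coordinate $f\mapsto f(v)$ is the composition of the affine embedding $\Hom(\triangle_n,P)\hookrightarrow\Hom(\RR^{n+1},\RR^m)$ of Theorem \ref{folklore}(a) with evaluation at a fixed point, hence affine; so the product map is affine. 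In the other direction: given $(w_0,\dots,w_n)\in P^{n+1}$, the barycentric-coordinate formula $f(\lambda_0 0+\sum\lambda_i e_i)=\sum\lambda_i w_i$ (with $\lambda_0=1-\sum\lambda_i$) exhibits $f(x)$ as an affine function of $(w_0,\dots,w_n)$ for each fixed $x$, so the inverse is affine as well. An affine bijection with affine inverse is an isomorphism in $\Pol$, giving $\Hom(\triangle_n,P)\cong P^{n+1}$.

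For part (b): write $P\subset\RR^m$ with $\dim P=m$ and $-P=P$, so $0\in\int(P)$ and $P^{\o}$ is a full-dimensional polytope in $\RR^m$. The target $\Box_n=\conv\{(\pm1,\dots,\pm1)\}$ has the product structure $\Box_n=\Box_1^{\,n}$, and $\Hom(-,Q_1\times\cdots\times Q_n)\cong\prod_i\Hom(-,Q_i)$, so it suffices to handle $n=1$: show $\Hom(P,\Box_1)\cong\Diamond(P^{\o})$, where $\Box_1=[-1,1]$ and $\Diamond(P^{\o})=\conv(P^{\o},\pm e_{m+1})\subset\RR^{m+1}$. An affine map $f:P\to[-1,1]$ has the form $f(x)=\xi\cdot x+c$ for some $\xi\in\RR^m$ and $c\in\RR$; the constraint is that $\xi\cdot x+c\in[-1,1]$ for all $x\in P$, equivalently (using $-P=P$) $|c|+\max_{x\in P}\xi\cdot x\le1$, i.e.\ $\max_{x\in P}\xi\cdot x\le 1-|c|$. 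I claim the correspondence $f\leftrightarrow(\xi,c)$ identifies $\Hom(P,\Box_1)$ with exactly $\Diamond(P^{\o})=\{(\xi,c):|c|\le1,\ \xi\in(1-|c|)P^{\o}\}$: indeed $\xi\in tP^{\o}$ for $t=1-|c|\ge0$ is precisely the inequality $\max_{x\in P}\xi\cdot x\le t$ (this is the definition of $P^{\o}$ rescaled, valid since $t\ge0$ and $P$ is full-dimensional so $P^{\o}$ is bounded). The set $\{(\xi,c):\xi\in(1-|c|)P^{\o},\ -1\le c\le1\}$ is the convex hull of $P^{\o}\times\{0\}$ and the two apex points $(0,\pm1)=\pm e_{m+1}$, i.e.\ the bipyramid $\Diamond(P^{\o})$. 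Finally the map $f\mapsto(\xi,c)$ is affine (the coefficients of an affine functional depend affinely on the functional — again via the embedding of Theorem \ref{folklore}(a)) with affine inverse $(\xi,c)\mapsto(x\mapsto\xi\cdot x+c)$, so it is an isomorphism in $\Pol$; tensoring back over the $n$ coordinates gives $\Hom(P,\Box_n)\cong\Diamond(P^{\o})^n$.

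The routine parts are the affineness checks, which follow formally from the embeddings in Section \ref{Preliminaries}. The one genuinely substantive point, and the step I would write out most carefully, is the identification in (b) of the feasible region $\{(\xi,c):|c|+\max_{x\in P}\xi\cdot x\le1\}$ with the bipyramid $\Diamond(P^{\o})$: this is where central symmetry of $P$ is used (to reduce the two-sided bound $-1\le\xi\cdot x+c\le1$ on all of $P$ to the single condition $|c|+\max_P\xi\cdot x\le1$), and where full-dimensionality of $P$ is used (to guarantee $P^{\o}$ is a bona fide polytope rather than an unbounded polyhedron, so that the scaled sets $tP^{\o}$ assemble into a bipyramid). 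I would double-check the degenerate slice $c=\pm1$, which forces $\xi=0$ and recovers the constant maps to the vertices $\pm1$ of $\Box_1$, matching the apexes $\pm e_{m+1}$ of the bipyramid — consistent with Theorem \ref{folklore}(d).
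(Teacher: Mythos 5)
Your proof is correct. Note that the paper itself does not prove Proposition \ref{calculus} at all: it simply cites \cite[Corollary 3.6]{HOM}, so there is no in-paper argument to compare against; your write-up is a sound self-contained substitute. For (a), the vertex-evaluation bijection $f\mapsto(f(0),f(e_1),\dots,f(e_n))$ together with the affineness checks in both directions is exactly the standard argument, valid because the vertices of $\triangle_n$ are affinely independent. For (b), the reduction via $\Box_n\cong\Box_1^{\,n}$ and $\Hom(P,Q_1\times\cdots\times Q_n)\cong\prod_i\Hom(P,Q_i)$ is legitimate since the product of polytopes is the categorical product in $\Pol$ (though you should say ``product,'' not ``tensoring,'' at the end -- the tensor product of \cite{HOM,Valby} is a different construction), and your identification of $\{(\xi,c)\ :\ \max_{x\in P}\xi\cdot x\le 1-|c|\}$ with the bipyramid $\conv\big(P^{\o}\times\{0\},\pm e_{m+1}\big)=\Diamond(P^{\o})$ is exactly where central symmetry (to collapse the two-sided constraint to $|c|+\max_P\xi\cdot x\le1$) and full-dimensionality (so the affine extension of $f$ is unique and $P^{\o}$ is bounded, with $\{\xi:\max_P\xi\cdot x\le t\}=tP^{\o}$ for $t\ge0$) enter; your check of the apex slices $c=\pm1$ closes the convex-hull identification. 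No gaps.
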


This is proved in \cite[Corollary 3.6]{HOM}.

As a special case of Proposition \ref{calculus}(b), we have $\Hom(\Box_m,\Box_n)\cong(\Diamond_{m+1})^n$ and $\Hom(\Diamond_m,\Box_n)\cong\Diamond(\Box_m)^n$. For $P$ not necessarily centrally symmetric, the polytope $\Hom(P,\Box_n)$ is still easily described, but the information on the face lattice structure is not as straightforward (\cite[Proposition 3.2(6)]{HOM}).

\section{Smooth perturbation criterion}\label{Smooth}

A \emph{smooth (affine) 1-family} in $\Hom(P,Q)$ is a subset of the form
$$
\{f_t\}_{(-1,1)}\subset\Hom(P,Q),
$$
where the map $\psi:(-1,1)\to\Aff(P,Q)$, determined by $\psi(t)|_P=f_t$, is injective and smooth (respectively, affine).  A \emph{smooth (affine) perturbation} of an element $f\in\Hom(P,Q)$ is a smooth (respectively, affine) 1-family $\{f_t\}_{(-1,1)}\subset\Hom(P,Q)$ with $f_0=f$.

The following criterion for vertex maps is just a specialization of the fact that a point in a polytope is not a vertex if and only if there is a smooth curve inside the polytope, passing through the point, if and only if there is an open interval inside the polytope, passing through the point.

\begin{lemma}[{Perturbation criterion}]\label{perturbation} In the notation above,
$f\notin\vertex(P,Q)$ if and only if $f$ admits a smooth perturbation if and only if $f$ admits an affine perturbation.
\end{lemma}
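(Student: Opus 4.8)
The plan is to deduce the perturbation criterion directly from the convexity of $\Hom(P,Q)$ established in Theorem \ref{folklore}(a), together with the elementary characterization of non-vertices of a polytope. Recall that a point $x$ of a polytope $K$ fails to be a vertex precisely when $x$ lies in the relative interior of a segment contained in $K$; equivalently, there is a nonconstant affine path through $x$ inside $K$; equivalently, there is a nonconstant smooth path through $x$ inside $K$. These three conditions are all equivalent for a point of a polytope, and this is really the only combinatorial input needed. So the strategy is to transport each of these three descriptions across the embedding $\theta\circ\theta_\pi$ from the end of Section \ref{Preliminaries} and to check that ``segment/affine path/smooth path inside $\Hom(P,Q)$'' corresponds exactly to ``affine 1-family / smooth 1-family in $\Hom(P,Q)$'' in the sense defined just above the statement.

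First I would prove the equivalence of the three geometric conditions for a point of an arbitrary polytope $K$. The implication ``vertex $\Rightarrow$ no smooth path'' is clear: if $x$ is a vertex, there is a supporting hyperplane meeting $K$ only at $x$, and a smooth path in $K$ through $x$ would have to lie in that hyperplane at $t=0$, and by looking at the first non-vanishing derivative of the linear functional along the path one concludes the path is constant to all orders — but one has to be slightly careful since smoothness alone does not force the path into a face. Here it is cleanest to use the finitely many facet-defining inequalities $\langle a_i,-\rangle\le b_i$ of $K$: if a smooth path $\psi(t)$ with $\psi(0)=x$ stays in $K$, then for each $i$ with $\langle a_i,x\rangle=b_i$ the smooth function $t\mapsto b_i-\langle a_i,\psi(t)\rangle$ is nonnegative and vanishes at $t=0$, hence has vanishing derivative there; if $x$ is a vertex the active $a_i$ span the ambient space, so $\psi'(0)=0$, and then one reparametrizes or argues by a standard connectedness/openness argument that a nonconstant smooth curve cannot be trapped. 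Conversely, if $x$ is not a vertex, it lies in the relative interior of an edge or larger face, so there is an honest open segment through $x$ in $K$, which is in particular an affine path, which is in particular a smooth path. Thus all three are equivalent, with the genuinely substantive direction being ``smooth path exists $\Rightarrow$ not a vertex,'' handled by the derivative-vanishing argument on the facet functionals. Since $K$ is a polytope and hence bounded with finitely many facets, this goes through; this is the step I expect to be the main obstacle, though it is standard.

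Next I would invoke Theorem \ref{folklore}(a): $\Hom(P,Q)$ is realized as a genuine polytope $K$ inside the linear space $\Hom(W\times\RR,V\times\RR)$, via the embedding $\theta\circ\theta_\pi$ described in Section \ref{Preliminaries}, and this embedding is affine, so it carries affine paths to affine paths, smooth paths to smooth paths, and vertices to vertices, in both directions. An affine $1$-family $\{f_t\}_{(-1,1)}$ in the sense of the definition above is exactly the image of a nonconstant affine map $(-1,1)\to K$ (injectivity of $\psi$ plus affineness forces nonconstancy), and a smooth $1$-family is exactly the image of a nonconstant smooth map $(-1,1)\to K$. Conversely, given a nonconstant affine (resp.\ smooth) path in $K$ through the point corresponding to $f$, restricting its domain to a small interval $(-\epsilon,\epsilon)$ and rescaling to $(-1,1)$ produces an affine (resp.\ smooth) perturbation of $f$ with $f_0=f$; injectivity of the restricted map is automatic for an affine path and can be arranged for a smooth path by shrinking the interval around a point where the derivative is nonzero, or simply by composing with a homeomorphism $(-1,1)\to(-\epsilon,\epsilon)$.

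Finally I would assemble the chain: $f\notin\vertex(P,Q)$ $\iff$ the corresponding point of $K$ is not a vertex of $K$ $\iff$ there is a nonconstant affine path through it in $K$ $\iff$ $f$ admits an affine perturbation $\iff$ (by the first step applied to $K$) there is a nonconstant smooth path through it in $K$ $\iff$ $f$ admits a smooth perturbation. Reading the outer equivalences off gives exactly the three-way equivalence in the statement. The only points requiring care, and worth a sentence each in the write-up, are: (i) that the convexity structure on $\Hom(P,Q)$ used here is the one from Theorem \ref{folklore}(a), which is independent of the auxiliary choice of $\pi$ as noted in Section \ref{Preliminaries}, so the notion of ``vertex map'' is well defined; and (ii) the reduction of an arbitrary smooth perturbation to the existence of a short open segment, i.e.\ the genuinely geometric lemma that a smooth curve in a polytope through a vertex is constant near that vertex.
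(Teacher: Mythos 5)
Your overall route is the same as the paper's: deduce Lemma \ref{perturbation} by viewing $\Hom(P,Q)$ as a polytope via the affine embedding behind Theorem \ref{folklore}(a) and then invoke the elementary characterization of non-vertices of a polytope (the paper disposes of this in one sentence). The parts of your write-up concerning the affine version are fine: not a vertex $\Leftrightarrow$ an open segment through the point inside the polytope $\Leftrightarrow$ an injective affine path, transported across the embedding, with the independence of the auxiliary choice of $\pi$ noted correctly.

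The genuine gap is exactly the step you flag as (ii), and it cannot be closed in the form you propose. From the facet inequalities you only obtain $\psi'(0)=0$, and the asserted follow-up --- that ``a nonconstant smooth curve cannot be trapped,'' by reparametrization or a connectedness/openness argument --- is false: an injective smooth curve can pass through a vertex while remaining inside the polytope. For instance, in $K=[0,1]^2$ the curve $\psi(t)=\tfrac12\,(t^2,\;t^2+t^3)$, $t\in(-1,1)$, is smooth, injective, lies in $K$, and has $\psi(0)=(0,0)$, a vertex; composing with a diffeomorphism of the interval fixing $0$ cannot create a nonzero velocity there. So your Step 1 claim that the three conditions (non-vertex, affine path, smooth path) are equivalent for a point of a polytope is not correct if ``smooth path'' means merely an injective smooth curve, which is how the smooth $1$-family is literally defined; the implication ``smooth perturbation $\Rightarrow$ not a vertex'' needs a nondegeneracy hypothesis such as $\psi'(0)\neq0$. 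With that hypothesis your facet-functional computation does finish instantly: if $f$ were a vertex, the normals of the facets through $f$ would span the ambient space while each annihilates $\psi'(0)$, a contradiction; combined with ``not a vertex $\Rightarrow$ segment $\Rightarrow$ affine $\Rightarrow$ smooth,'' this is the criterion in the form in which it is actually applied (the explicit families built in Lemmas \ref{insidesimplex} and \ref{newlemma} should then be checked to have nonzero velocity at the basepoint). In fairness, the paper's own one-line proof glosses over the same point, but as a proof of the lemma as you have set it up, step (ii) is where the argument breaks.
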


In \cite{HOM} only the affine perturbation criterion is used. In this paper we will need both criteria. The smooth version is used in Lemmas \ref{insidesimplex} and \ref{newlemma}.

\begin{corollary}\label{convexposition1}
Let $P, R$, and $Q$ be polytopes. Assume $R\subset Q$ and $m\ge2$.
\begin{itemize}
\item[(a)] If $f\in\vertex(P,Q)$ and $f(P)\subset R$ then $f\in\vertex(P,R)$.
\item[(b)] For $P$ centrally symmetric we have the implication
\begin{align*}
f\in\vertex(\Diamond_m,P)\ \&\ (f(0)\ \text{is the center of}&\ P)\quad\Longrightarrow\\
&f(\vertex(\Diamond_m))\subset\vertex(P).
\end{align*}
\item[(c)] If $f\in\vertex(\Diamond_m,P)$ then
$$
f(\vertex(\Diamond_m))=\vertex(\Im f)\subset\vertex(P\cap(2f(0)-P)).
$$
\end{itemize}
\end{corollary}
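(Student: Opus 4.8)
The plan is to prove (a) first, then bootstrap (b) and (c) from it using the perturbation criterion together with the central symmetry of the source crosspolytope. For (a), suppose $f\in\vertex(P,Q)$ and $f(P)\subset R$, but $f\notin\vertex(P,R)$. By the perturbation criterion (Lemma \ref{perturbation}), $f$ admits a smooth perturbation $\{f_t\}_{t\in(-1,1)}$ inside $\Hom(P,R)$. Since $R\subset Q$, this same family lies in $\Hom(P,Q)$ and passes through $f$, so $f\notin\vertex(P,Q)$ — a contradiction. Thus $f\in\vertex(P,R)$. (Note that the hypothesis $m\ge2$ is not needed for (a); it is there for (b) and (c), where $\Diamond_m$ must have enough vertices to apply an averaging argument.)

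For (b), assume $f\in\vertex(\Diamond_m,P)$ with $f(0)$ equal to the center $c$ of $P$, and suppose some vertex $e_i$ of $\Diamond_m$ has $f(e_i)\notin\vertex(P)$. I would like to perturb $f$ by moving $f(e_i)$ a little while fixing the images of all other vertices; the obstruction is that to keep the map affine we must simultaneously move $f(-e_i)$. Here the centering condition saves us: since $0=\tfrac12(e_i+(-e_i))$, affinity forces $f(0)=\tfrac12\big(f(e_i)+f(-e_i)\big)$, so $f(-e_i)=2c-f(e_i)$. Because $f(e_i)$ lies in the relative interior of a positive-dimensional face of $P$ (as it is not a vertex), there is a direction $u$ with $f(e_i)\pm\varepsilon u\in P$ for small $\varepsilon$; then $f(-e_i)=2c-f(e_i)$ moves to $2c-f(e_i)\mp\varepsilon u$, and by central symmetry of $P$ about $c$ this point also stays in $P$. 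Defining $f_t$ to send $e_i\mapsto f(e_i)+tu$, $-e_i\mapsto 2c-f(e_i)-tu$, and fixing $e_j,-e_j$ for $j\ne i$, determines an affine map (the $2m$ values are consistent since we have preserved all the barycentric relations among the vertices of $\Diamond_m$), giving a nontrivial affine perturbation of $f$ — contradiction. Hence $f(\vertex(\Diamond_m))\subset\vertex(P)$.

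For (c), the key point is that every $f\in\vertex(\Diamond_m,P)$ is \emph{centered into its image}: I claim $f(0)$ is the center of $\Im f$. Indeed, $\Im f$ is the image of the centrally symmetric polytope $\Diamond_m$, hence is itself centrally symmetric about $f(0)$, so $f(0)$ is the center of $\Im f$. By Proposition \ref{calculus1}(b), $f_{\surj}\in\vertex(\Diamond_m,\Im f)$, and $f_{\surj}$ sends $0$ to the center of $\Im f$; applying part (b) with $P$ replaced by $\Im f$ gives $f(\vertex(\Diamond_m))=f_{\surj}(\vertex(\Diamond_m))\subset\vertex(\Im f)$, and since the vertices of a polytope that is the image of $\Diamond_m$ are all hit by vertices of $\Diamond_m$, this inclusion is an equality $f(\vertex(\Diamond_m))=\vertex(\Im f)$. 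It remains to show $\vertex(\Im f)\subset\vertex\big(P\cap(2f(0)-P)\big)$. Write $c=f(0)$. Since $\Im f\subset P$ and $\Im f=2c-\Im f$ (central symmetry about $c$) also satisfies $\Im f\subset 2c-P$, we get $\Im f\subset P\cap(2c-P)=:P'$. Now $P'$ is centrally symmetric about $c$, and I must check that a vertex $v$ of $\Im f$ is still a vertex of the larger polytope $P'$. This is where I expect the main work: a priori $v$ could become a non-vertex boundary point of $P'$. The argument is that if $v\in\vertex(\Im f)$ were not a vertex of $P'$, it would lie in the relative interior of a positive-dimensional face of $P'$, and — again using that $f_{\surj}\in\vertex(\Diamond_m,\Im f)$ with $f_{\surj}(0)=c$ the center of $\Im f$ — one can run the same perturbation as in (b) but now with target $P'$: move the vertex $e_i$ with $f(e_i)=v$ in a feasible direction $u$ inside $P'$ and move $-e_i$ to $2c-v-tu\in P'$ by symmetry of $P'$, fixing all other vertex images. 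Since $\Im f\subset P'$, this is a legitimate affine perturbation of $f_{\surj}$ in $\Hom(\Diamond_m,P')$, contradicting $f_{\surj}\in\vertex(\Diamond_m,P')$ (which holds by part (a), as $\Im f\subset P'\subset \Im f$... rather, as $f_{\surj}(\Diamond_m)=\Im f\subset P'$, part (a) upgrades $f_{\surj}\in\vertex(\Diamond_m,\Im f)$ only after we know it is a vertex over $P'$ — so instead one argues directly that the displayed perturbation lies in $\Hom(\Diamond_m,P')$ and violates the original $f_{\surj}\in\vertex(\Diamond_m,\Im f)$ is too weak; the clean route is: the perturbation stays inside $\Hom(\Diamond_m,P')\supset\Hom(\Diamond_m,\Im f)$, so it also fails to be trivial there once we observe the perturbed maps eventually re-enter, which they do for small $t$ since $\Im f$ is full-dimensional in $\Aff(\Im f)$). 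The honest obstacle, then, is pinning down exactly which ambient polytope the perturbation must be confined to; the resolution is that $P\cap(2c-P)$ is the \emph{largest} centrally symmetric (about $c$) polytope inside $P$ containing $\Im f$, and any feasible perturbation direction for $\Im f$ as a subset of $P$ that respects the forced symmetry $f(-e_i)=2c-f(e_i)$ automatically keeps both $\pm e_i$-images inside $P\cap(2c-P)$ — which is precisely the content of the inclusion we want.
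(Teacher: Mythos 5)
Parts (a) and (b) of your proposal are correct and are essentially the paper's own arguments: (a) is the one-line observation that a perturbation inside $\Hom(P,R)$ is already a perturbation inside $\Hom(P,Q)$, and (b) is the antipodal-pair perturbation through the center, which is well defined because all the midpoint relations $\tfrac12\big(f_t(e_j)+f_t(-e_j)\big)=c$ are preserved. The equality in (c) also matches the paper: $\Im f$ is centrally symmetric about $f(0)$, $f_{\surj}\in\vertex(\Diamond_m,\Im f)$ by Proposition \ref{calculus1}(b), and part (b) applies with target $\Im f$.

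The genuine gap is in the inclusion $\vertex(\Im f)\subset\vertex\big(P\cap(2f(0)-P)\big)$. You construct the right perturbation (move $v=f(e_i)$ along an open interval of $P'=P\cap(2f(0)-P)$ centered at $v$, move $f(-e_i)$ by the reflection through $c=f(0)$, fix everything else), but you never correctly identify what this perturbation contradicts. You rightly observe that part (a) cannot upgrade $f_{\surj}\in\vertex(\Diamond_m,\Im f)$ to $\vertex(\Diamond_m,P')$ --- that is the wrong direction --- but your substitute, that the perturbed maps ``re-enter'' $\Hom(\Diamond_m,\Im f)$ for small $t$, is false: $v$ is a vertex of $\Im f$, so $v+tu$ in general leaves $\Im f$ for every $t\neq0$; and your closing sentence about the largest centrally symmetric subpolytope asserts the desired containment rather than deriving a contradiction. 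The missing step is a one-liner you already possess: the hypothesis of (c) is $f\in\vertex(\Diamond_m,P)$, not merely $f_{\surj}\in\vertex(\Diamond_m,\Im f)$. Since $\Im f\subset P'\subset P$, part (a) with $Q=P$ and $R=P'$ gives $f\in\vertex(\Diamond_m,P')$, and then part (b), applied to the centrally symmetric polytope $P'$ with center $f(0)$, yields $f(\vertex(\Diamond_m))\subset\vertex(P')$. (Equivalently and even more directly: your perturbation takes values in $P'\subset P$, so it is an affine perturbation of $f$ inside $\Hom(\Diamond_m,P)$ itself, contradicting the hypothesis.) This is exactly how the paper closes (c): it notes $\Im f\subset P\cap(2f(0)-P)$ and applies (b) again, with (a) supplying the passage from target $P$ to target $P\cap(2f(0)-P)$.
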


\begin{proof} (a) A smooth 1-family in $\Hom(P,Q)$ is a smooth 1-family in $\Hom(P,R)$.

\medskip(b) Assume $f(v)\notin\vertex(P)$ for some $v\in\vertex(\Diamond_m)$ and fix an open interval $(x,y)\subset P$ centered at $f(v)$. Then the family $\{f_t\}_{(-1,1)}\subset\Hom(\Diamond_m,P)$, defined by
\begin{align*}
f_t(w)=
\begin{cases}
\big((1-t)x+(1+t)y\big)/2,\ \text{if}\ w=v,\\
\big((1-t)(2f(0)-x)+(1+t)(2f(0)-y)\big)/2,\ \text{if}\ w=f(-v),\\
f(w),\ \text{if}\ w\in\vertex(\Diamond_m)\setminus\{v,-v\},
\end{cases}
\end{align*}
is an affine 1-family, contradicting Lemma \ref{perturbation}.

\medskip(c) By Proposition \ref{calculus1}(b), $f\in\vertex(\Diamond_m,\Im f)$. So the equality follows from part (b). For the inclusion, we observe that $\Im f\subset P\cap(2f(0)-P)$ (just because $\Im f$ is centrally symmetric w.r.t. $f(0)$). So (b) applies again.
\end{proof}

\begin{lemma}\label{insidesimplex}
Let $n$ be a natural number, $P$ a polytope, and $f\in\vertex(P,\triangle_n)$. Then there exists a face $G\subset\triangle_n$ such that $\dim f(P)=\dim G$, $f(P)\subset G$, and every facet of $G$ contains a facet of $f(P)$.
\end{lemma}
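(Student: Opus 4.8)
The plan is to reduce to the case where $f$ is surjective onto $\triangle_n$ and then use the perturbation criterion to force rigidity of the preimages of the facets of $\triangle_n$. First I would replace $\triangle_n$ by the smallest face $G\subset\triangle_n$ containing $f(P)$: this face exists because $f(P)$, being the image of a polytope, is a polytope contained in $\triangle_n$, and the intersection of all faces of $\triangle_n$ containing $f(P)$ is again a face with the same property. By construction $f(P)\subset\int_G(G)$ relative to $\Aff(G)$, so in particular $\dim f(P)\le\dim G$; I will show below that equality holds. By Corollary \ref{convexposition1}(a), $f\in\vertex(P,G)$, and $G$ is itself a simplex, so after relabeling we may assume from the start that $f(P)$ meets the interior of $\triangle_n$ and prove that then $f(P)=\triangle_n$ and every facet of $\triangle_n$ contains a facet of $f(P)$ — applying this conclusion to $G$ gives the statement.

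So assume $f(P)\cap\int(\triangle_n)\ne\emptyset$. Write the facets of $\triangle_n$ as the $n+1$ hyperplane sections cut out by the coordinate functionals $x_1\ge0,\dots,x_n\ge0$ and $x_1+\cdots+x_n\le1$; call these supporting functionals $\ell_0,\dots,\ell_n$ (each $\ell_i$ nonnegative on $\triangle_n$, vanishing on the $i$-th facet $F_i$). For each $i$ let $P_i=f^{-1}(F_i)=\{p\in P : \ell_i(f(p))=0\}$, a (possibly empty) face of $P$. The key claim is: for every $i$, either $P_i$ is a facet of $P$, or $P_i=\emptyset$ and moreover $f(P)$ does not meet $F_i$ — equivalently, $f(P)$ has a facet lying in $F_i$ exactly when $P_i\ne\emptyset$, and in that case $\dim P_i=\dim P-1$. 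Given the claim, the facet of $f(P)$ inside $F_i$ is $f(P_i)$, which is $(\dim P -1)$-dimensional; since $f$ maps the interior point to the interior, $f(P)$ is a full-dimensional polytope in $\triangle_n$ whose every bounding hyperplane is one of the $\ell_i=0$, hence $f(P)=\triangle_n$ (a simplex is the only polytope with exactly these facet hyperplanes), giving $\dim f(P)=n=\dim\triangle_n$ and the facet-containment conclusion.

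To prove the claim, suppose some $P_i$ is nonempty but not a facet, i.e. $\codim_P P_i\ge 2$ (the case $\codim_P P_i=0$ is impossible since $f(P)$ meets the interior). I will build an affine perturbation $\{f_t\}$ of $f$ contradicting Lemma \ref{perturbation}. The idea is to push $f$ "off the wall" $F_i$: pick an affine functional $\mu$ on $\Aff(P)$ and set $f_t = f + t\,\mu\cdot(\text{direction into }\triangle_n\text{ away from }F_i)$, i.e. $\ell_j\circ f_t = \ell_j\circ f$ for $j\ne i$ and $\ell_i\circ f_t = \ell_i\circ f + t\mu$. For this to land in $\triangle_n$ for small $|t|$ we need $\ell_i\circ f + t\mu\ge 0$ on $P$ and the other functionals unchanged, which is automatic; the only constraint is that $\ell_i\circ f + t\mu$ remain nonnegative on $P$, and since $\{\ell_i\circ f = 0\} = P_i$ has codimension $\ge 2$, the hyperplane $\{\mu = 0\}$ can be chosen to contain $P_i$ while $\mu$ is not a scalar multiple of $\ell_i\circ f$ on $\Aff(P)$ — here I would invoke Lemma \ref{helly} in dual form to see that the affine span of $P_i$ has codimension $\ge 2$, leaving at least a $1$-parameter family of such $\mu$ — and then a small multiple of any such $\mu$ keeps $\ell_i\circ f + t\mu \ge 0$ on the compact set $P$ (it is already strictly positive off $P_i$, and vanishes with $\ell_i\circ f$ on $P_i$). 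This produces a genuine nonconstant affine $1$-family through $f$, contradicting $f\in\vertex(P,\triangle_n)$. Symmetrically, if $f(P)$ meets the open facet $F_i$ but $P_i$ is not a facet, the same perturbation applies, so $f(P)\cap F_i\ne\emptyset$ forces $P_i$ to be a facet.

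The main obstacle I anticipate is making the perturbation argument airtight when several walls interact: one must check that perturbing $\ell_i\circ f$ does not violate the other inequalities — this is why the construction only moves the single functional $\ell_i$ and leaves the rest fixed, which is possible precisely because on a simplex the facet functionals are affinely independent (any $n$ of them determine the $(n+1)$st up to affine combination, but one can still vary one while freezing the others as \emph{functions on} $\Aff(P)$, since $\dim\Aff(P,\triangle_n)$ has room). The second delicate point is the codimension bookkeeping: one needs that a nonempty face $P_i$ of codimension $\ge 2$ in $P$ really leaves enough freedom to choose $\mu$ vanishing on $\Aff(P_i)$ yet affinely independent from $\ell_i\circ f|_{\Aff(P)}$, and that the resulting $\ell_i\circ f_t$ stays nonnegative for $|t|$ small — both follow from compactness of $P$ together with $\ell_i\circ f > 0$ on $P\setminus P_i$, but should be stated carefully.
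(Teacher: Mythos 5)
Your reduction overshoots the lemma and, in doing so, lands on a false statement. After passing to the minimal face and assuming $f(P)\cap\int(\triangle_n)\neq\emptyset$, you claim the conclusion is $f(P)=\triangle_n$. That is strictly stronger than the lemma and is false in general: the facets of $f(P)$ need not lie on facet hyperplanes of $\triangle_n$ (a polytope inside a simplex can perfectly well have facets interior to the simplex), so the step ``every bounding hyperplane of $f(P)$ is one of $\ell_i=0$, hence $f(P)=\triangle_n$'' is a non sequitur. Indeed the paper's own main examples contradict it: a full-rank vertex map $\Diamond_n\to\triangle_n$ (Lemma \ref{diamondembed}(c), Theorem \ref{diamondtosimplex}) has image $\triangle_n\cap(2f(0)-\triangle_n)$, a proper subpolytope touching every facet of $\triangle_n$ in dimension $n-1$. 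Relatedly, your ``key claim'' is about the codimension of $P_i=f^{-1}(F_i)$ in $P$, but the lemma is about $\dim\bigl(f(P)\cap F_i\bigr)$; these are not interchangeable when $f$ collapses dimensions ($P_i$ a facet of $P$ does not force $f(P_i)$ to be $(n-1)$-dimensional), and your claim explicitly allows $P_i=\emptyset$, i.e.\ $f(P)\cap F_i=\emptyset$ — but ruling out exactly this possibility (and more generally $\dim(f(P)\cap F_i)\le n-2$) is the entire content of the lemma, and your perturbation is only invoked when $P_i\neq\emptyset$.

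The perturbation itself is also not well-founded. The $n+1$ facet functionals of $\triangle_n$ satisfy $\ell_0+\cdots+\ell_n\equiv 1$, so you cannot vary $\ell_i\circ f$ while ``freezing the others as functions on $\Aff(P)$'': any affine deformation $f_t$ changes at least two of the compositions $\ell_j\circ f_t$, and since the family must exist for both signs of $t$, the direction of deformation must be compatible with \emph{all} active constraints, i.e.\ $\mu$ must vanish on every $P_j$ whose functional is disturbed. When the other $P_j$'s are facets of $P$ this can leave no admissible $\mu$ at all, so the ``interaction of walls'' you set aside is a genuine obstruction, not a bookkeeping point. This is precisely why the paper does something different: assuming $\dim(F\cap f(P))\le n-2$ for some facet $F$, it rotates the hyperplane $\Aff(F)$ slightly about an $(n-2)$-dimensional affine subspace containing $f(P)\cap F$, intersects with $\RR^n_+$ to get nearby simplices $\triangle_t=\conv(0,\lambda_{t1}e_1,\dots,\lambda_{tn}e_n)$, and composes $f$ with the diagonal rescalings $\alpha_t$ carrying $\triangle_t$ back into $\triangle_n$. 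This produces a \emph{smooth} (not affine) nonconstant family through $f$, handles the empty-intersection case uniformly, and explains why the paper needs the smooth version of the perturbation criterion (Lemma \ref{perturbation}) in this lemma. To repair your argument you would need both a correct statement of what is to be excluded (small or empty intersection of $f(P)$ with a facet of $G$, measured in the target) and a perturbation mechanism that respects all facet constraints simultaneously.
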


In the proof we will use rotations around codimension 2 affine subspaces. Let $n\ge2$ and $H\subset\RR^n$ be a codimension two affine subspace. A \emph{rotation around $H$} is an affine automorphism of the form
$$
\rho^H:\RR^n\to\RR^n,\qquad x\mapsto \rho(x-h)+h,
$$
where $h\in H$ and $\rho\in SO(n)$ is an element which fixes $\lin(H)$. We have the bijective correspondence $\rho^H\leftrightarrow \rho$. Consequently, the rotations around $H$ are naturally parameterized by the unit circle $S^1$ and we get the embedding of Lie groups:
\begin{align*}
S^1\to SO(n),\quad t\mapsto\rho^H_t\mapsto\rho_t,\qquad t\in S^1.
\end{align*}
($S^1$ is thought of as the multiplicative group of complex numbers of norm 1.)

\begin{proof}[Proof of Lemma \ref{insidesimplex}]
If $P\subset\partial\triangle_n$ then induction on dimension applies in view of Corollary \ref{convexposition1}(a). So there is no loss of generality in assuming $f(P)\cap\int(\triangle_n)\not=\emptyset$.

We want to show $\dim(F\cap f(P))=n-1$ for all $F\in\FF(\triangle_n)$, which also implies $\dim f(P)=n$.

Assume, to the contrary, $\dim(F\cap f(P))\le n-2$ for a facet $F\subset\triangle_n$.

Without loss of generality we can further assume $F=\conv(e_1,\ldots,e_n)\in\FF(\triangle_n)$.

Let $H_F=\Aff(F)$ and $H_F^+\subset\RR^n$ be the affine half-space containing $\triangle_n$.

Let $H\subset H_F$ be an $(n-2)$-dimensional affine subspace, containing $f(P)\cap F$. There exists a small open arc $\Gamma\subset S^1$, containing $1$, such that
\begin{align*}
f(P)\subset \rho^H_t(H_F^+),\quad t\in\Gamma.
\end{align*}
Consider the simplices
$$
\triangle_t=\RR^n_+\cap\rho^H_t(H_F^+),\quad t\in\Gamma.
$$
We have
$$
\triangle_t=\conv(0,\lambda_{t1}e_1,\ldots,\lambda_{tn}e_n)
$$
for some real numbers $\lambda_{t1},\ldots,\lambda_{tn}>0$. Furthermore, the maps
$$
\Gamma\to\RR,\quad t\mapsto\lambda_{ti},\qquad i=1,\ldots,n,
$$
are smooth and at least one of them is injective. Therefore, the linear maps
$$
\alpha_t:\RR^n\to\RR^n,\quad\alpha_t(e_i)=\lambda^{-1}_{ti}e_i,\qquad\qquad i=1,\ldots,n,
$$
give rise to a smooth embedding $\Gamma\to GL_n(\RR)$. Moreover, for any point $x\in\int(\triangle_n)$, the map
$$
\Gamma\to\RR_+^n,\quad t\mapsto \alpha_t(x),
$$
is smooth and \emph{injective}.

Summing up, we have:

\noindent$\centerdot$ $\alpha_t(f(P))\subset\triangle_n$ for every $t\in\Gamma$,

\noindent$\centerdot$ the map $\Gamma\to\Hom(P,\triangle_n)\hookrightarrow\Aff(P,\triangle_n)$, $t\mapsto\alpha_t\circ f$,
is smooth injective.

\smallskip Straightening out $\Gamma$ via a diffeomorphism $\Gamma\to(-1,1)$, which maps $1$ to $0$, we get a smooth perturbation of $f$, and this contradicts Lemma \ref{perturbation}.

Figure 1 represents the face $F$ being rotated about the codimension 2 affine space containing $\Aff(f(P)\cap F)$.

\begin{figure}[htb]
\includegraphics[scale=.30]{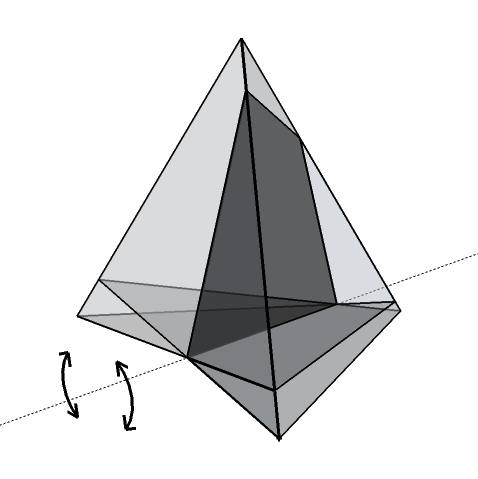}
\caption{}
\end{figure}
\end{proof}

\section{$\vertex(\Box_m,\triangle_n)$}\label{cubetosimplex}

The main result in this section is that every vertex map $\Box_m\to\triangle_n$ collapses the cube into either a vertex or an edge of $\triangle_n$. This fact allows a particular realization of the set $\vertex(\Box_m,\triangle_n)\subset\RR^{mn+n}$. In the proof of the main result we seek a smooth perturbation of an arbitrary rank $k\ge2$ element of
$\Hom(\Box_m,\triangle_n)$. A dualization argument allows us to use special perturbations of simplices in crosspolytopes, constructed in Lemma \ref{simplexindiamond}.

\begin{theorem}\label{boxtriangle}
For all natural numbers $m$ and $n$ we have
$$
\vertex(\Box_m,\triangle_n)=\vertex^{(0)}(\Box_m,\triangle_n)\ \cup\ \vertex^{(1)}(\Box_m,\triangle_n).
$$
\end{theorem}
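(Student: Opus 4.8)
The plan is to show that any vertex map $f\colon\Box_m\to\triangle_n$ of rank $k\ge 2$ admits a smooth perturbation, contradicting Lemma~\ref{perturbation}; together with Proposition~\ref{calculus1}(c) (which disposes of rank $1$) and the trivial rank $0$ case, this yields the claimed decomposition. The natural route is to dualize. Recall from Proposition~\ref{calculus}(b)--style reasoning and the self-duality built into the excerpt that $\triangle_n$ is an $n$-simplex and $\Box_m$ is dual to $\Diamond_m$; more to the point, a rank $k$ affine map $f\colon\Box_m\to\triangle_n$ has image a $k$-dimensional polytope $\Im f\subset\triangle_n$, and $f_{\surj}\colon\Box_m\to\Im f$ is a surjection. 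By Proposition~\ref{calculus1}(b), $f_{\surj}\in\vertex(\Box_m,\Im f)$ and $f_{\inj}\in\vertex(\Im f,\triangle_n)$. Using Lemma~\ref{insidesimplex} applied to $f$ (with $P=\Box_m$), we may replace $\triangle_n$ by the face $G=\Aff(\Im f)\cap\triangle_n$, a $k$-simplex, every facet of which meets $\Im f$ in a facet; so without loss of generality $k=n$ and $f$ is surjective onto $\triangle_n$.

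So the crux is: a \emph{surjective} vertex map $f\colon\Box_m\to\triangle_n$ with $n\ge 2$ cannot exist. I would dualize this statement. Dualizing $\triangle_n$ (it is not centrally symmetric, so I would instead use the embedding $\theta\circ\theta_\pi$ and pass to linear maps, or better, exploit that the statement about $\Box_m$ surjecting onto $\triangle_n$ is equivalent, under polarity of the source, to a statement about a simplex sitting inside a crosspolytope). Concretely, the announced strategy in the section header is to reduce to \textbf{Lemma~\ref{simplexindiamond}}: a construction of special smooth perturbations of simplices inside crosspolytopes. The idea is that $f\colon\Box_m\to\triangle_n$ surjective corresponds, via the hom--tensor adjunction and self-duality $\Box_m^{\o}=\Diamond_m$, to an affine map $\triangle_n^{\o}\to\Diamond_m$ of full rank onto a simplex inscribed in $\Diamond_m$; perturbing that inscribed simplex (rotating/dilating it inside $\Diamond_m$ as in the rotation-around-codimension-$2$-subspace technique of Lemma~\ref{insidesimplex}) produces the needed smooth $1$-family, which transports back to a smooth perturbation of $f$.

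The key steps, in order: (1) reduce to $f$ surjective onto $\triangle_n$ via Lemma~\ref{insidesimplex} and Proposition~\ref{calculus1}(b); (2) handle rank $\le 1$ directly — rank $0$ by Theorem~\ref{folklore}(d) only realizes vertices, rank $1$ by Proposition~\ref{calculus1}(c) forces a facet projection onto an edge or diagonal of $\triangle_n$, but $\triangle_n$ has no diagonals, so rank $1$ vertex maps project onto edges, and these are exactly $\vertex^{(1)}$; (3) for $n\ge2$ and $f$ surjective, set up the dual picture: an $n$-simplex $S$ with $S\subset\Diamond_m$ arising from $f$, where the vertices of $\Box_m$ correspond to the facet-defining functionals; (4) invoke Lemma~\ref{simplexindiamond} to produce a smooth $1$-parameter family of simplices $S_t\subset\Diamond_m$ with $S_0=S$, using that a full-dimensional simplex inscribed in a crosspolytope always has a perturbation; (5) translate $\{S_t\}$ back into a smooth perturbation $\{f_t\}$ of $f$ in $\Hom(\Box_m,\triangle_n)$, using that the embedding $\Aff(\Box_m,\triangle_n)\hookrightarrow\Hom(\RR^{m+1},\RR^{n+1})$ is affine so smoothness is preserved; (6) conclude by Lemma~\ref{perturbation} that $f\notin\vertex(\Box_m,\triangle_n)$, contradiction.

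The main obstacle I anticipate is step (3)--(4): making the dualization precise. The duality $\Box_m^{\o}=\Diamond_m$ is clean, but $\triangle_n$ is not centrally symmetric, so the correspondence between "$\Box_m$ surjects onto $\triangle_n$" and "an $n$-simplex inscribed in $\Diamond_m$" is not a literal application of Proposition~\ref{calculus}(b); it requires a homogenization/cone argument (working in $\RR^{m}\times\RR$ and $\RR^{n}\times\RR$, dualizing the cone over $\Box_m$ against the cone over $\triangle_n$) and careful bookkeeping of which data of $f$ becomes which data of the inscribed simplex — in particular that the $2^m$ vertices of $\Box_m$ map into $\triangle_n$ translates into $2m$ facet inequalities of $\Diamond_m$ being satisfied by $S$, and surjectivity of $f$ translates into $S$ being full-dimensional. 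Once that dictionary is fixed, Lemma~\ref{simplexindiamond} should do the geometric work, and the remaining steps are bookkeeping: checking that the perturbed maps still land in $\triangle_n$ (equivalently the $S_t$ stay inside $\Diamond_m$, which is exactly what the lemma guarantees on a small arc) and that the parameterization is genuinely non-constant (injectivity of $t\mapsto f_t$), which follows from injectivity of $t\mapsto S_t$ in the lemma.
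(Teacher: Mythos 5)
Your overall strategy (rule out rank $\ge 2$ by producing a smooth perturbation, reduce via Lemma~\ref{insidesimplex}, dualize through $\Box_m^{\o}=\Diamond_m$ to a simplex sitting inside a crosspolytope, and apply Lemma~\ref{simplexindiamond}) is the paper's strategy in spirit, but there is a genuine error at the pivot of your argument: the reduction to ``$f$ surjective onto $\triangle_n$'' is false. Lemma~\ref{insidesimplex} only lets you assume $\rank f=n$ and that every facet of $\triangle_n$ contains a facet of $f(\Box_m)$; it does not give $f(\Box_m)=\triangle_n$, and in fact this can never happen for $n\ge2$, since the affine image of a cube is a zonotope (centrally symmetric) while $\triangle_n$ is not. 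So the ``crux'' you reduce to is vacuous, and the dictionary you build on it --- $f$ surjective corresponding to an inscribed simplex $\triangle_n^{\o}\to\Diamond_m$ --- does not address the actual problem, namely vertex maps whose image is a full-dimensional zonotope properly contained in $\triangle_n$. The correct dualization (the one the paper uses) does not go through $\Im f$ at all: one considers the preimage $\tilde f^{-1}(\triangle_n)=\triangle\times V$, an infinite prism containing $\Box_m$, where $V=\tilde f^{-1}(f(0))$ and $\triangle$ is the perpendicular cross-section with $0\in\int(\triangle)$; dualizing the inclusion $\Box_m\subset\triangle\times V$ inside $\RR^m$ yields a simplex $\triangle'\subset\Diamond_m$ with $0\in\int(\triangle')$, and it is to this $\triangle'$ (generally of dimension $n<m$, not full-dimensional) that Lemma~\ref{simplexindiamond} is applied.

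The second gap is your step (5)--(6), which you dismiss as bookkeeping. After perturbing $\triangle'$ inside $\Diamond_m$ and dualizing back to prisms $\triangle_t\times V_t\supset\Box_m$, the induced candidate perturbation of $f$ is $(\tilde f\circ\alpha_t)|_{\Box_m}$ for suitable linear automorphisms $\alpha_t$ carrying $\triangle_t\times V_t$ into $\triangle\times V$, and injectivity of $t\mapsto\triangle_t$ does \emph{not} immediately imply that $t\mapsto(\tilde f\circ\alpha_t)|_{\Box_m}$ is non-constant: $\alpha_t$ could a priori move points only in directions killed by $\tilde f$. The paper needs a separate argument here --- if the family were constant, full-dimensionality of $\Box_m$ forces $\alpha_t$ into a block-triangular form whose inverse shows the perpendicular cross-sections satisfy $\triangle_t=\triangle$ for all $t$, contradicting the non-constancy of the perturbation from Lemma~\ref{simplexindiamond}. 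Without both the corrected dualization and this non-constancy argument, your outline does not close.
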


\begin{corollary}\label{BT}
The polytope $\Hom(\Box_m,\triangle_n)$ has $(n+1)(mn+1)$ vertices. The following set is a particular realization of $\vertex(\Box_m,\triangle_n)$ in $\RR^{n+nm}$:
\begin{align*}
0,\ 2e_i,\ e_i+e_{ik},\ e_i-e_{ik},\ (e_i+e_j)+(e_{ik}-e_{jk})\ &\text{with}\ i\not=j,\\
&i,j=1,\ldots,n,\quad k=1,\ldots,m,
\end{align*}
where $e_\nu$ denotes the standard basis vector in the direction labeled by $\nu$ for every (single or double) index $\nu$.
\end{corollary}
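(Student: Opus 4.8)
The statement packages two things — the count $(n+1)(mn+1)$ and the explicit coordinate list — and I would extract both from Theorem~\ref{boxtriangle}, feeding in Theorem~\ref{folklore}(d) and Proposition~\ref{calculus1}(c). By Theorem~\ref{boxtriangle} the vertex set is the disjoint union $\vertex^{(0)}(\Box_m,\triangle_n)\sqcup\vertex^{(1)}(\Box_m,\triangle_n)$ (disjoint because the ranks differ), so the plan is to count the two ranks separately and then compute the coordinate of each vertex map. For rank $0$: such a map is constant with image a single point $w\in\triangle_n$; if $w\in\vertex(\triangle_n)$ the constant map is a vertex of $\Hom(\Box_m,\triangle_n)$ by Theorem~\ref{folklore}(d), while if $w\notin\vertex(\triangle_n)$ then sliding $w$ along a segment of $\triangle_n$ having $w$ in its relative interior is an affine perturbation, so by Lemma~\ref{perturbation} the map is not a vertex. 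Hence $\vertex^{(0)}(\Box_m,\triangle_n)$ is in bijection with $\vertex(\triangle_n)$ and has $n+1$ elements.

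For rank $1$: Proposition~\ref{calculus1}(c) says a rank $1$ map $\Box_m\to\triangle_n$ is a vertex map precisely when it is a facet projection of $\Box_m$ onto an edge or diagonal of $\triangle_n$; since $\triangle_n$ is a simplex it has no diagonals and exactly $\binom{n+1}{2}$ edges. The $2m$ facets $\{x_k=\pm1\}$ of $\Box_m$ come in $m$ parallel pairs, each pair spanning the same linear hyperplane $\{x_k=0\}$, so there are exactly $m$ facet projections of $\Box_m$ up to an affine isomorphism of the one-dimensional image, the $k$-th one being $x\mapsto x_k$ onto $[-1,1]$. Post-composing the $k$-th projection with one of the two affine isomorphisms of $[-1,1]$ onto an edge $[v_a,v_b]$ of $\triangle_n$ yields $f_{k,v_a,v_b}(x)=\tfrac{1-x_k}{2}v_a+\tfrac{1+x_k}{2}v_b$ (which is onto $[v_a,v_b]$ and is parallel projection along $\{x_k=0\}$); these are exactly the facet projections of $\Box_m$ onto edges of $\triangle_n$, and they are pairwise distinct, since from $f$ one recovers $k$ as the unique coordinate $f$ depends on and $v_a,v_b$ as the values at $x_k=\mp1$. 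Thus $\vertex^{(1)}(\Box_m,\triangle_n)$ has $2m\binom{n+1}{2}=mn(n+1)$ elements, and altogether $|\vertex(\Box_m,\triangle_n)|=(n+1)+mn(n+1)=(n+1)(mn+1)$.

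For the realization I would fix the affine embedding $\Hom(\Box_m,\triangle_n)\hookrightarrow\RR^{n+nm}$ sending $f$ — written uniquely as $f(x)=f(0)+L_f(x)$ with $L_f$ linear, since $\Box_m$ is centred at the origin — to the point whose $e_i$-coordinate is $2f(0)_i$ and whose $e_{ik}$-coordinate is $2(L_f(e_k))_i$; this is the embedding of Theorem~\ref{folklore}(a) rescaled by the factor $2$ that clears denominators. It then remains only to evaluate this map on the $(n+1)(mn+1)$ vertex maps found above: the constant maps to $0$ and to $e_i$ give $0$ and $2e_i$; $f_{k,0,e_i}$ and $f_{k,e_i,0}$ give $e_i+e_{ik}$ and $e_i-e_{ik}$; and $f_{k,e_j,e_i}$ with $i\ne j$ gives $(e_i+e_j)+(e_{ik}-e_{jk})$, where the ordered pair $(i,j)$ ranges over all pairs of distinct indices. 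This reproduces the displayed list verbatim, and injectivity of the embedding makes a separate distinctness check unnecessary. I do not expect a genuine obstacle beyond Theorem~\ref{boxtriangle}: the one delicate point is the bookkeeping for rank $1$ — that opposite facets of $\Box_m$ contribute a single projection (hence $m$ of them, not $2m$), that each target edge absorbs exactly the two orientations, and that the ambient $\RR^{n+nm}$ must be normalized by that factor $2$ for the listed coordinates to be lattice points.
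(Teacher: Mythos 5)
Your proposal is correct, and it reaches the corollary by a more direct route than the paper does. You take Theorem~\ref{boxtriangle} as the starting point (as the paper does), but then you enumerate the two rank classes head-on: rank~$0$ via Theorem~\ref{folklore}(d) plus an easy perturbation of constant maps at non-vertex points, and rank~$1$ via Proposition~\ref{calculus1}(c), with the correct bookkeeping that opposite facets of $\Box_m$ induce the same projection (so $2m$ maps per edge, $mn(n+1)$ in total), giving $(n+1)(mn+1)$; you then realize the vertex set by the explicit linear coordinates $f\mapsto\bigl(2f(0),2L_f\bigr)$ and simply evaluate each vertex map, which reproduces the displayed list and settles distinctness at once. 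The paper instead never enumerates $\vertex^{(1)}$ abstractly: it exhibits the sub-crosspolytopes $\Diamond^{(i)}=\Hom(\Box_m,[0,2e_i])$ via Proposition~\ref{calculus}(b), names their rank-$1$ vertices $v_i^{\pm}(F)$, observes that $v_i^{+}(F)+v_j^{-}(F)$ are the facet projections onto the non-coordinate edges, and then builds an affine map from the abstract polytope with the listed vertices onto $\Hom(\Box_m,\triangle_n)$, concluding it is an isomorphism by surjectivity (Theorem~\ref{boxtriangle} plus Proposition~\ref{calculus1}(c)) together with the rank count $n+nm=\dim\Hom(\Box_m,\triangle_n)$ from Theorem~\ref{folklore}(c). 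What your version buys is transparency: the count and the coordinates come from a single explicit parametrization, with no need for Proposition~\ref{calculus}(b) or the surjectivity-plus-dimension argument; what the paper's version buys is the structural picture emphasized right after the corollary, namely that the rank-$1$ vertices organize into $(m+1)$-dimensional crosspolytopes sitting on the edges of the dilated simplex. One cosmetic remark: your identification of your embedding with ``the embedding of Theorem~\ref{folklore}(a) rescaled by $2$'' is loose (that embedding lands in $\Hom(\RR^{m+1},\RR^{n+1})$), but this is immaterial since your map $f\mapsto\bigl(2f(0),2L_f\bigr)$ is itself an affine (indeed linear) injection on $\Aff(\Box_m,\triangle_n)$, which is all the claim of a ``particular realization'' requires.
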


To parse the notation in the corollary, the vertices of $\Hom(\Box_m,\triangle_n)$ are the vertices of a complex of crosspolytopes arranged on the edges of the dilated $n$-simplex, $\conv(0,2e_1,\ldots,2e_n)$, embedded in $\RR^{mn+n}$. For every coordinate direction $e_i$ we get $m$ more coordinate directions, the $e_{ik}$, thus filling out $\RR^{mn+n}$. When viewed as such, the vertices $0$, $2e_i$, $i=1,\ldots,n$, are the vertices of the simplex, the vertices $e_i\pm e_{ik}$ are the vertices of the crosspolytopes centered on the coordinate edges of the simplex, and the vertices $(e_i +e_j)+(e_{ik}-e_{jk})$ are the vertices of the crosspolytopes centered on the non-coordinate edges of the simplex.  Figure 2 represents a geometrically symmetrized version of the above realization of $\Hom(\Box_2,\triangle_3)$.

\begin{figure}[htb]
\includegraphics[scale=.25]{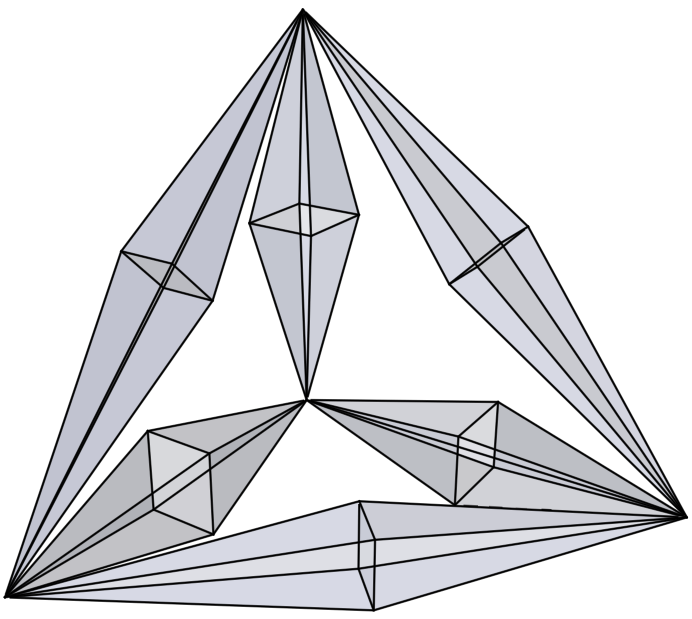}
\caption{}
\end{figure}

\begin{proof}[Proof of Corollary \ref{BT}] By Proposition \ref{calculus}(b), we have the family of $(m+1)$-dimensional crosspolytopes in $\Hom(\Box_m,\triangle_n)$:
$$
\Diamond^{(i)}=\Hom(\Box_m,[0,2e_i])\subset\Hom(\Box_m,\triangle_n),\quad i=1,\ldots,n.
$$
By Proposition \ref{calculus1}(c), we also have
$$
\vertex(\Diamond^{(i)})\subset\vertex(\Box_m,\triangle_n),\qquad i=1,\ldots,n.
$$
By the same Proposition \ref{calculus1}(c), for every facet $F\subset\Box_m$ and every index $i$ we have a pair of rank 1 vertices in $\Diamond^{(i)}$. Depending on whether the function
\begin{align*}
\RR^m\to\RR,\quad (a_1,\ldots,a_n)\mapsto a_k,\quad e_k\perp F,
\end{align*}
is increasing or decreasing, the corresponding elements of the mentioned pair will be denoted by $v_i^+(F)$ and $v_i^-(F)$.

For every pair of indices $i,j\in\{1,\ldots,n\}$, the assignment
$$
v_i^+(F)\mapsto v_j^+(F),\quad v_i^-(F)\mapsto v_j^-(F),\quad F\in\FF(\Box_n),
$$
gives rise to a bijective correspondence between $\vertex(\Diamond^{(i)})$ and $\vertex(\Diamond^{(j)})$.

Let $F\subset\Box_m$ be a facet and $\pi:\Box_m\to[-1,1]$ an $F$-projection. Consider the affine isomorphisms:
$$
\tau_i,\rho_i:[-1,1]\to[0,2e_i],\quad\tau_i(-1)=0,\quad\rho_i(-1)=2e_i,\quad i=1,\ldots,n.\\
$$
The maps
$$
(\tau_i\circ\pi+\rho_j\circ\pi):\Box_m\to[2e_i,2e_j]
$$
are surjective $F$-projections for all $i,j=1,\ldots,n$, $i\not=j$. In terms of the vertices of the $\Diamond^{(i)}$ introduced above, this means
$$
v_i^+(F)+v_j^-(F)\ \in\ \vertex(\Box_m,\triangle_n),
$$
the sum on the left being the point-wise addition of maps evaluating in $\RR_+\triangle_n=\RR^n$.

On the other hand, we have $(e_i+e_j)+(e_{ik}-e_{jk})=(e_i+e_{ik})+(e_j-e_{jk})$ and
$$
\{e_i,e_i+e_{ik},e-e_{ik}\}_{i=1,\ldots,n;\ k=1,\ldots,m}\subset\RR^{mn+n}
$$
is a linearly independent system. So the polytope with vertices as in Corollary \ref{BT} maps to $\Hom(\Box_m,\triangle_n)$ by the affine map that sends $0,2e_1,\ldots,2e_n$ to the corresponding elements of $\vertex^{(0)}(\Box_m,\triangle_n)$ and sends $e_i\pm e_{ik}$ to the corresponding $v_i^{\pm}(F)$. By Theorem \ref{boxtriangle} this is a surjective map. But the rank of the vector system in Corollary \ref{BT} equals $n+nm=\dim(\Hom(\Box_m,\triangle_n))$ (Theorem \ref{folklore}(c)). So the map is an isomorphism.
\end{proof}

In the proof of Theorem \ref{boxtriangle} we will need the following

\begin{lemma}\label{simplexindiamond}
For any natural numbers $2\le n\le m$ and an $n$-simplex $\triangle\subset\Diamond_m$ with $0\in\int(\triangle)$, there is a smooth perturbation $\{g_t\}_{(-1,1)}\subset\Hom(\triangle,\Diamond_m)$ of the identity embedding $g_0:\triangle\to\Diamond_m$, such that $0\in\int(g_t(\triangle))$ for all $t\in(-1,1)$.
\end{lemma}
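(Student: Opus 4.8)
The plan is to obtain the perturbation by moving a single, carefully chosen vertex of $\triangle$ along a positive-dimensional face of $\Diamond_m$, leaving the other $n$ vertices fixed. Write $\triangle=\conv(v_0,\ldots,v_n)$ with $v_0,\ldots,v_n$ affinely independent, and let $0=\sum_{i=0}^n\lambda_iv_i$ be the barycentric representation of the origin; since $0\in\int(\triangle)$, all $\lambda_i>0$. I will use the following openness observation: affine independence of $n+1$ points and positivity of the barycentric coordinates of the origin are open conditions on the points, so if $v_{i_0}$ is replaced by a point close enough to it, the convex hull is again an $n$-simplex with $0$ in its interior. In view of this, it suffices to produce a short, nonconstant affine motion of one vertex of $\triangle$ that keeps that vertex inside $\Diamond_m$; straightening a short symmetric interval onto $(-1,1)$ then yields a family indexed by $(-1,1)$.

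The crux is that such a movable vertex always exists when $n\ge2$: I claim some $v_i$ lies in the relative interior of a face $F$ of $\Diamond_m$ of dimension $\ge1$, i.e. not every $v_i$ is a vertex of $\Diamond_m$. Suppose otherwise, so $v_i=\sigma_ie_{k_i}$ with $\sigma_i\in\{1,-1\}$ for each $i$. Reading off coordinate $k$ from $0=\sum\lambda_iv_i$ and using $\lambda_i>0$ forces, for every index $k$ occurring among the $k_i$, both of $e_k$ and $-e_k$ to occur among the $v_i$; since the $v_i$ are distinct this gives $\{v_0,\ldots,v_n\}=\{\pm e_k : k\in T\}$ for some $T\subset\{1,\ldots,m\}$, so $n+1=2|T|$. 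But $\Aff(v_0,\ldots,v_n)$ then equals the coordinate subspace spanned by $\{e_k : k\in T\}$ (it contains $0$ and all $\pm e_k$, $k\in T$), of dimension $|T|=(n+1)/2$, whereas affine independence of the $v_i$ makes it $n$-dimensional; hence $n=1$, contradicting $n\ge2$. This is the only place the hypothesis $n\ge2$ is used, and I expect it to be the sole real obstacle — the rest is formal.

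Given such a vertex $v=v_{i_0}$, lying in the relative interior of a face $F\subset\Diamond_m$ with $\lin(F)\ne0$, choose a nonzero $u\in\lin(F)$ so short that $v+tu\in\int(F)\subset\Diamond_m$ for all $t\in[-1,1]$ and, by the openness observation above, so that $\conv(v_0,\ldots,v+tu,\ldots,v_n)$ is an $n$-simplex with $0$ in its interior for all $t\in[-1,1]$. Let $g_t:\triangle\to\Diamond_m$ be the affine map with $g_t(v)=v+tu$ and $g_t(v_i)=v_i$ for $i\ne i_0$; it does map into $\Diamond_m$ because $g_t(\triangle)=\conv(g_t(v_0),\ldots,g_t(v_n))\subset\Diamond_m$ by convexity, and $0\in\int(g_t(\triangle))$ by the choice of $u$. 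Finally $t\mapsto g_t$ is affine in $t$ with nonzero linear part (as $u\ne0$ and $\triangle$ is a simplex), hence a smooth injective map $(-1,1)\to\Aff(\triangle,\Diamond_m)$, i.e. a smooth $1$-family; and $g_0$ is the identity embedding. Thus $\{g_t\}_{(-1,1)}$ is the required smooth perturbation (and, via Lemma \ref{perturbation}, it also records that the identity embedding is never a vertex of $\Hom(\triangle,\Diamond_m)$ when $n\ge2$).
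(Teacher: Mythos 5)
Your combinatorial step --- that for $n\ge2$ some vertex of $\triangle$ fails to be a vertex of $\Diamond_m$ --- is correct and is essentially the paper's observation that $k\ne n+1$. The gap is in the perturbation itself. You move a single vertex $v$ along a direction $u\in\lin(F)$, where $F$ is the carrier face of $v$ in $\Diamond_m$, and you justify $0\in\int(g_t(\triangle))$ by claiming that positivity of the barycentric coordinates of $0$ is an open condition on the vertices. When $n<m$ this appeal is invalid: for $0$ to have barycentric coordinates with respect to the perturbed vertices at all, it must lie in their affine hull, and membership of $0$ in an $n$-flat of $\RR^m$ is not an open condition. With the other $n$ vertices fixed, the perturbed affine hull contains $0$ only if $v+tu$ stays in $\RR\triangle$, i.e.\ only if $u\in\lin(F)\cap\RR\triangle$; for a generic $u\in\lin(F)$ one gets $0\notin\Aff(g_t(\triangle))$ for every $t\ne0$, so the required condition fails outright rather than being preserved by openness.

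Nor can the defect be repaired merely by choosing $u$ more carefully, because there are configurations in which no single-vertex motion works. Take $m=3$ and a generic plane $L\subset\RR^3$ through $0$, so that $T=L\cap\Diamond_3$ is a hexagon whose vertices lie in the relative interiors of edges of $\Diamond_3$, and let $\triangle$ be the triangle on three alternating vertices of $T$; then $0\in\int(\triangle)$, $n=2<m=3$, and no vertex of $\triangle$ is a vertex of $\Diamond_3$. Each vertex of $\triangle$ lies on an edge $F$ of $\Diamond_3$ with $\lin(F)\cap L=0$, so moving it along $\lin(F)$ removes $0$ from the affine hull, while moving it inside $L$ in both directions is impossible because the vertex is a vertex of the cross-section $T=\RR\triangle\cap\Diamond_3$. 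This is precisely the case $k<n<m$ for which the paper proves Lemma \ref{newlemma}: instead of moving one vertex, it rotates the hyperplane system cutting out $\RR\triangle$ about a codimension-two subspace through $0$ (and through the vertices of $\triangle$ that are vertices of $\Diamond_m$), tilting the whole flat through the origin and moving all remaining vertices simultaneously along their carrier faces. Your argument does suffice in the full-dimensional case $n=m$, where the affine-hull issue disappears, and that is also how the paper disposes of that case before invoking Lemma \ref{newlemma}; but as written your proof does not establish the lemma in the essential case $k<n<m$.
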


\begin{proof} Let $\{v_1,\ldots,v_k\}=\vertex(\Diamond_m)\cap\triangle$ for some $k$.

We have $k\ne n+1$, for otherwise either $\triangle\subset\partial\Diamond_m$ or $0$ belongs to an edge of $\triangle$, both possibilities excluded by the condition $0\in\int(\triangle)$.

First consider the case $n=m$. The identity embedding $\triangle\to\Diamond_m$ does not belong to $\vertex(\triangle,\Diamond_m)$ for otherwise Proposition \ref{calculus}(a) implies $\vertex(\triangle)\subset\vertex(\Diamond_m)$, which we have already excluded. Consequently, $\triangle$ can be perturbed inside $\Diamond_m$ smoothly in such a way that $0$ is in the relative interior of the perturbed copies of $\triangle$.

So we can assume $n<m$. Assume, further, $k=n$. Because $0\in\int(\triangle)$ we have $\triangle\subset(\RR v_1+\cdots+\RR v_n)\cap\Diamond_m$. But $(\RR v_1+\cdots+\RR v_n)\cap\Diamond_m\subset\Diamond_m$ is an $n$-dimensional sub-crosspolytope and the problem reduces to the full dimensional case, considered above.

The general case has been reduced to the case $k<n<m$, and the proof is completed by the next general lemma.
\end{proof}

\begin{lemma}\label{newlemma}
Assume $k<n<m$ are three natural numbers, $Q\subset\RR^m$ is an $m$-polytope, $z\in\int(Q)$, and $\triangle\subset Q$ is an $n$-simplex such that $z\in\int(\triangle)$ and $\vertex(\triangle)\cap\vertex(Q)=\{v_1,\ldots,v_k\}$. Then there exists a smooth perturbation
$$
\{g_t\}_{(-1,1)}\subset\Hom(\triangle,Q)
$$
of the identity embedding $g_0:\triangle\to Q$ such that $z\in\int(g_t(\triangle))$ for all $t\in(-1,1)$.
\end{lemma}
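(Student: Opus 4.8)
The plan is to construct the perturbation by first moving only the $n-k$ vertices of $\triangle$ that do \emph{not} sit at vertices of $Q$, keeping them inside $\int(Q)$, and then arguing that the resulting family is a genuine (injective, smooth) perturbation. Write $\vertex(\triangle)=\{v_1,\dots,v_k,w_1,\dots,w_{n+1-k}\}$, where $v_1,\dots,v_k\in\vertex(Q)$ and $w_1,\dots,w_{n+1-k}\in\partial Q\setminus\vertex(Q)$ or in $\int(Q)$. Since $k<n$ we have at least two ``free'' vertices $w_1,w_2$. Because each $w_i$ lies in $Q$ but is not a vertex of $Q$, there is an open line segment $(x_i,y_i)\subset Q$ with midpoint $w_i$; choose these segments so short that the perturbed simplices stay close to $\triangle$. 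The first attempt is the affine family that sends $w_1\mapsto\tfrac12((1-t)x_1+(1+t)y_1)$ and fixes all other vertices of $\triangle$. This is an affine $1$-family in $\Hom(\triangle,Q)$ provided it is injective as a map $(-1,1)\to\Aff(\triangle,Q)$, which holds as long as $x_1\ne y_1$, i.e. as long as $w_1\in\int(Q)$ or lies in the relative interior of a positive-dimensional face. If every free $w_i$ happens to lie on a vertex of $Q$ we would be stuck — but that is excluded, since $w_i\notin\vertex(Q)$ by definition of $k$.

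The subtlety is that after perturbing we must keep $z\in\int(g_t(\triangle))$. Moving a single vertex $w_1$ along a tiny segment changes $g_t(\triangle)$ by a small amount, and since $z\in\int(\triangle)$ is an open condition, $z$ remains in the interior of $g_t(\triangle)$ for $t$ in a possibly smaller interval $(-\epsilon,\epsilon)$; after reparametrizing $(-\epsilon,\epsilon)\cong(-1,1)$ by a diffeomorphism fixing $0$ we obtain the desired family. Thus the only real issue is to guarantee that the segment $(x_1,y_1)$ is nondegenerate, i.e. that $w_1$ can actually be moved. Here I would use that $w_1\notin\vertex(Q)$: either $w_1\in\int(Q)$, and then any sufficiently short segment through $w_1$ works, or $w_1$ lies in the relative interior of a face $G$ of $Q$ with $\dim G\ge1$, and then a short segment inside $G$ works. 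In both cases $x_1\ne y_1$, so the family is a nontrivial affine perturbation, and by the perturbation criterion (Lemma \ref{perturbation}) — or rather, directly — it gives the claimed smooth perturbation once restricted and reparametrized to keep $z$ interior.

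Alternatively, and perhaps more cleanly, I would argue via Lemma \ref{perturbation} in the contrapositive spirit: it suffices to exhibit \emph{any} smooth $1$-family of embeddings $g_t:\triangle\hookrightarrow Q$ with $g_0=\mathrm{id}$ and $z\in\int(g_t(\triangle))$. Take a vector field on $\RR^m$ supported in a small ball around $w_1$ that is tangent to the minimal face of $Q$ containing $w_1$ (possible since that face has positive dimension) and whose flow $\phi_t$ therefore preserves $Q$ near $w_1$; set $g_t = (\phi_t\text{ applied to the vertex } w_1,\text{ affinely extended})$. Smoothness and injectivity of $t\mapsto g_t$ in $\Aff(\triangle,Q)$ follow from $\phi_t(w_1)\ne w_1$ for $t\ne0$, and $z\in\int(g_t\triangle)$ holds for small $t$ by openness.

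The main obstacle I anticipate is purely bookkeeping: verifying that $t\mapsto g_t$ is injective into $\Aff(\triangle,Q)$ (not merely into $\Hom(\triangle,Q)$) and that no degeneracy of the simplex $g_t(\triangle)$ occurs — but since only one vertex moves and it moves along a genuine segment, the affine map determined by the $n+1$ vertices varies injectively, and nondegeneracy is again an open condition around $t=0$. So the heart of the matter is simply the observation, forced by $k<n$ and by the definition of $k$, that $\triangle$ has a vertex lying in $Q$ but not at a vertex of $Q$, hence movable; everything else is the standard ``shrink the interval and reparametrize'' argument already used in the proof of Corollary \ref{convexposition1}(b).
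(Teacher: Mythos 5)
Your proposal has a genuine gap at the decisive step, namely the claim that after moving the single vertex $w_1$ along a short segment in $Q$ ``$z$ remains in the interior of $g_t(\triangle)$ by openness.'' In this paper $\int(-)$ denotes the interior \emph{relative to the affine hull}, and here $\dim\triangle=n<m$, so $g_t(\triangle)$ is a lower-dimensional body whose affine hull moves with $t$. If the segment $(x_1,y_1)$ is not contained in $\Aff(\triangle)$, then for $t\neq0$ the $n$-plane $\Aff(g_t(\triangle))$ contains the affine span of the $n$ unmoved vertices and meets $\Aff(\triangle)$ in exactly that $(n-1)$-plane; since $z$ has all barycentric coordinates of $\triangle$ positive, $z$ does not lie on that $(n-1)$-plane, so $z\notin g_t(\triangle)$ at all, let alone in $\int(g_t(\triangle))$. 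Your flow-based variant has the same defect, since it too moves one vertex off $\Aff(\triangle)$. The argument can only be salvaged when the segment can be taken inside $\Aff(\triangle)\cap Q$, i.e.\ when some free vertex of $\triangle$ fails to be a vertex of $T:=\Aff(\triangle)\cap Q$ --- and that is precisely the easy case the paper disposes of at the outset via the reduction to (\ref{vertextriangle}), $\vertex(\triangle)\subset\vertex(T)$.

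What your proposal does not address is the remaining (and genuinely occurring) case: every free vertex $w$ is a vertex of $T$ but lies in the relative interior of a positive-dimensional face $F_w\subset Q$ with $\int(F_w)\cap\Aff(\triangle)=\{w\}$. Then every segment through $w$ inside $Q$ is forced into $F_w$ and hence leaves $\Aff(\triangle)$, so moving one vertex at a time necessarily loses $z$. (Concretely: $m=3$, $n=2$, $k=1$, $\triangle=\conv(v_1,w_1,w_2)$ with $z=0$ in its relative interior, and $Q$ a $3$-polytope having $v_1$ as a vertex and $w_1,w_2$ in the relative interiors of edges transverse to the plane of $\triangle$.) The paper's proof handles this by moving \emph{all} free vertices simultaneously and coherently: it writes $\RR\triangle=H_1\cap\cdots\cap H_{m-n}$, rotates these hyperplanes about a codimension-two axis $H$ with $H\cap\RR\triangle=\RR v_1+\cdots+\RR v_k$ (so the axis passes through $z=0$ and the fixed vertices), and defines $g_t(w)$ as the intersection point of $F_w$ with the rotated hyperplanes $\rho^H_t(H_j)$, $j\in J_w$; these points vary smoothly and non-constantly inside the faces $F_w$, giving a perturbation compatible with keeping $z$ in the relative interior. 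This coordinated mechanism is the heart of the lemma, and it is absent from your proposal.
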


\begin{proof}
The set $\vertex(\triangle)\setminus\vertex(Q)$ has at least two elements, say $v$ and $v'$.

Without loss of generality we assume $Q\subset\RR^m$ and $z=0$. Choose a system of hyperplanes $H_1,\ldots,H_{m-n}\subset\RR^m$, satisfying the condition
$$
H_1\cap\cdots\cap H_{m-n}=\RR\triangle.
$$
Consider the polytope $T=\RR\triangle\cap Q$. Without loss of generality,
\begin{equation}\label{vertextriangle}
\vertex(\triangle)\subset\vertex(T),
\end{equation}
for, otherwise, the conditions $\dim\triangle=\dim T$ and $0\in\int(\triangle)$ allow a smooth perturbation $\{g_t\}_{(-1,1)}\subset\Hom(\triangle,T)\subset\Hom(\triangle,Q)$ with $0\in\int(g_t(\triangle))$ for all $t\in(-1,1)$.

The equality $\vertex(\triangle)\cap\vertex(Q)=\{v_1,\ldots,v_k\}$ and (\ref{vertextriangle}) imply that every vertex $w\in\vertex(\triangle)\setminus\{v_1,\ldots,v_k\}$ belongs to a (unique) \emph{positive dimensional} face $F_w\subset Q$, such that
$$
w=\int(F_w)\cap H_1\cap\cdots\cap H_{m-n}.
$$
Denote $d_w=\dim F_w$. We have $d_w\le m-n$ and there exists a $d_w$-element subset $J_w\subset\{1,\ldots,m-n\}$, such that

\begin{equation}\label{vertexhit}
w=\int(F_w)\bigcap\big(\bigcap_{J_w}H_j\big).
\end{equation}
(This follows, for instance, from Lemma \ref{helly}, applied to $w$ instead of $0$ and the system of affine spaces $\Aff(F_w)\cap H_1,\ldots,\Aff(F_w)\cap H_{m-n}$ instead of linear subspaces.)

Next we choose a codimension two subspace $H\subset\RR^m$, such that
$H\cap\RR\triangle=\RR v_1+\cdots+\RR v_k$. In particular, $\vertex(\triangle)\setminus\{v_1,\ldots,v_k\}\cap H=\emptyset$.

For a small open arc $\Gamma\subset S^1$, containing $1$, and any vertex $w\in\vertex(\triangle)\setminus\{v_1,\ldots,v_k\}$, we have
\begin{equation}\label{rotatingintersection}
\dim\big(F_w\bigcap\rho_t^H\big(\bigcap_{J_w}H_j\big)\big)=\dim\big(F_w\bigcap\big(\bigcap_{J_w}\rho_t^H(H_j)\big)\big)=0,\qquad t\in\Gamma,
\end{equation}
where, as in the proof of Lemma \ref{insidesimplex}, $\rho_t^H$ is the corresponding rotation around $H$. The equality (\ref{rotatingintersection}) follows from the equality (\ref{vertexhit}): two complementary dimensional affine subspaces of $\RR^n$ in general position remain in general position after small perturbations.

Since the points
$$
w_t=F_w\bigcap\big(\bigcap_{J_w}\rho_t^H(H_j)\big),\qquad t\in\Gamma,
$$
are smooth non-constant functions of $t$, we get a smooth family in $\Hom(\triangle,\Diamond_m)$:
\begin{align*}
g_t:\triangle\to\Diamond_m,\qquad g_t(w)=
\begin{cases}
w,\ \text{if}\ w\in\{v_1,\ldots,v_k\},\\
w_t,\ \text{if}\ w\in\vertex(\triangle)\setminus\{v_1,\ldots,v_k\},
\end{cases}&\\
&t\in\Gamma.
\end{align*}
Using a diffeomorphism $\Gamma\to(-1,1)$, which maps $1$ to $0$, we obtain a smooth family with the desired property.
\end{proof}

\medskip\begin{proof}[Proof of Theorem  \ref{boxtriangle}]
Pick $f\in\Hom(\Box_m,\triangle_n)$ with $\rank f\ge2$. We want to show that $f$ admits a smooth perturbation inside $\Hom(\Box_m,\triangle_n)$.

By Lemma \ref{insidesimplex}, there is no loss of generality in assuming that $m\ge\rank f=n$. So $f(\Box_m)$ is an $n$-dimensional zonotope in $\triangle_n$.

The inclusion $f(\Box_m)\subset\triangle_n$ gives rise to the inclusion
\begin{equation}\label{prism}
\Box_m\subset\tilde f^{-1}(\triangle_n)=\triangle\times V
\end{equation}
where:

\noindent$\centerdot$ $\tilde f:\RR^m\to\RR^n$ is the unique affine extension of $f:\Box_m\to\triangle_n$,

\noindent$\centerdot$ $V=\tilde f^{-1}(f(0))\subset\RR^m$, an $(m-n)$-dimensional linear subspace,

\noindent$\centerdot$ $\triangle$ is the cross section of $\tilde f^{-1}(\triangle_n)$ by the $n$-dimensional linear subspace of $\RR^m$, \emph{perpendicular} to $V$.

\smallskip Since $f(0)\in\int(f(\Box_m))$, we also have $0\in\int(\triangle)$.

By dualizing, (\ref{prism}) implies
$$
\triangle'\subset\Diamond_m,\qquad 0\in\int(\triangle'),
$$
where $\triangle'$ is the $n$-simplex, dual to $\triangle$ within the subspace $\Aff(\triangle)=\RR\triangle\subset\RR^m$ w.r.t. the Euclidean norm, induced from $\RR^m$.

By Lemma \ref{simplexindiamond}, there is a smooth perturbation $\{g_t\}_{(-1,1)}\subset\Hom(\triangle',\Diamond_m)$ of the identity embedding $g_0:\triangle'\to\Diamond_m$, satisfying the condition $0\in \int(g_t(\triangle'))$ for all $t\in(-1,1)$. For every $t\in(-1,1)$, the dual of $g_t(\triangle')$ in $\RR^m$ is the right prism $\triangle_t\times V_t\subset\RR^m$ for a uniquely determined $n$-simplex $\triangle_t$ with $0\in\int(\triangle_t)$ and the corresponding $(m-n)$-dimensional perpendicular subspace $V_t\subset\RR^m$. In fact, $\triangle_t$ is the dual of $g_t(\triangle')$ within the linear subspace $\RR g_t(\triangle')\subset\RR^m$ w.r.t. the Euclidean norm, induced from $\RR^m$. In particular, $\triangle_0=\triangle$ and $V_0=V$.

By dualizing, the inclusions $g_t(\triangle')\subset\Diamond_m$ imply
\begin{equation}\label{tiltedprism}
\Box_m\subset\triangle_t\times V_t,\qquad t\in(-1,1).
\end{equation}

We can choose two smooth families:

\smallskip\noindent$\centerdot$ $\{\phi_t:\RR\triangle\to\RR\triangle_t\ |\ \phi_t\ \text{a linear isomorphism}\}_{(-1,1)}\subset\Hom(\RR\triangle,\RR^m)\cong\RR^{mn}$,

\smallskip\noindent$\centerdot$ $\{\psi_t:V\to V_t\ |\ \psi_t\ \text{a linear isomorphism}\}_{(-1,1)}\subset\Hom(V,\RR^m)\cong\RR^{m(m-n)}$,

\smallskip\noindent satisfying $\psi_0={\bf1}_V$, $\phi_0={\bf1}_{\triangle}$, and $\phi_t(\triangle)=\triangle_t$ for all $t\in(-1,1)$.

Consider the smooth family of linear automorphisms
\begin{align*}
\alpha_t=\phi_t^{-1}\times\psi_t^{-1}:\RR^m\to\RR^m,\quad(x,y)\mapsto&(\phi_t^{-1}(x),\psi_t^{-1}(y))\\
&(x,y)\in\triangle_t\times V_t,\quad t\in(-1,1).
\end{align*}
We have $\alpha_0={\bf1}_{\RR^m}$.

By (\ref{tiltedprism}), we have the inclusions
$$
\alpha_t(\Box_m)\subset\triangle\times V,\qquad t\in(-1,1),
$$
and, consequently,
$$
(\tilde f\circ\alpha_t)(\Box_m)\subset\triangle_n,\qquad t\in(-1,1).
$$

If the set $(\tilde f\circ\alpha_t)(\Box_m)$ varies along with $t$ then $(\tilde f\circ\alpha_t)|_{\Box_m}$ is a smooth perturbation of $f$ and we are done by Lemma \ref{perturbation}. So
without loss of generality we can assume $(\tilde f\circ\alpha_t)|_{\Box_m}$ does not vary along with $t$. Because $\Box_m$ is full dimensional, this means
\begin{align*}
\alpha_t=
\begin{pmatrix}
\beta_t&0\\
\gamma_t&{\bf 1}
\end{pmatrix},\qquad t\in(-1,1),
\end{align*}
where, for each $t$:

\noindent$\centerdot$ $\beta_t:V\to V$ is a linear automorphism,

\noindent$\centerdot$ $\gamma_t:V\to\RR\triangle$ is linear map,

\noindent$\centerdot$ ${\bf 1}:\RR\triangle\to\RR\triangle$ is the identity map,

\noindent$\centerdot$ $\RR^m$ is thought of as
$\tiny{
\begin{pmatrix}
V\\
\RR\triangle
\end{pmatrix}
}$.

\smallskip Since $\triangle_t$ is the compact perpendicular cross-section of the infinite prism
$$
\alpha^{-1}_t(\triangle\times V)=\triangle_t\times V_t
$$
and
\begin{align*}
\alpha_t^{-1}=
\begin{pmatrix}
\beta_t^{-1}&0\\
\gamma'_t&{\bf 1}
\end{pmatrix}
\end{align*}
for an appropriate $\gamma'_t$, we get $\triangle_t=\triangle$ for all $t\in(-1,1)$, contradicting the definition of the $\triangle_t$.
\end{proof}

\section{$\vertex(\Diamond_m,\triangle_n)$}\label{Diamondtosimplex}

By Proposition \ref{calculus1}(b) and Lemma \ref{insidesimplex}, for determination of the vertices as in the title above, it is enough to consider the rank $k$ vertices of $\Hom(\Diamond_m,\triangle_k)$ for all $k\le n$. The main result of this section is stated in Theorem \ref{diamondtosimplex}, the part (a) of which says that, given an arbitrary $n$-polytope $P$ and a vertex $f\in\vertex^{(n)}(\Diamond_m,P)$, the image $\Im f$ contains an $n$-dimensional crosspolytope, sitting imperturbably in $P$. The parts (b,c) focus on the case when $P=\triangle_n$.  Theorem \ref{diamondtosimplex} allows us to give a geometric description of all vertex maps $\Diamond_m\to\triangle_n$ and estimate their number (Corollary \ref{numberdiamondsimplex}).

We begin by introducing some integer sequences that will help us with the estimation.

Consider the set of vertex maps:
$$
\V(n)=\{f\in\vertex^{(n)}(\triangle_n,\Box_n)\ |\ 0\in\int(\Im f)\}.
$$

We can interpret the elements of $\V(n)$ as the ordered $(n+1)$-tuples of vertices of $\Box_n$, whose convex hulls are full-dimensional and contain $0$ in the interior.

The left action of $\BC_n$ on $\Hom(\triangle_n,\Box_n)$ restricts to a left actions on $\V(n)$.
For the number of orbits of the group action denote
$\beta(n)=\#\big(\BC_n\backslash\V(n)\big)$.

\begin{lemma}\label{diamondembed}
Let $n$ be a natural number.
\begin{itemize}
\item[(a)] $\#\V(n)=2^nn!\beta(n)$,
\item[(b)]
$\beta(n)=
\begin{cases}
1,\ \text{if}\ n=1,3,\\
0,\ \text{if}\ n=2,\\
\ge1,\ \text{if}\ n\ge4.
\end{cases}$
\item[(c)] Assuming $n\ge 2$, there is a bijection $\vertex^{(n)}(\Diamond_n,\triangle_n)\approx\V(n)$.
\end{itemize}
\end{lemma}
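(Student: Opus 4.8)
The plan is to prove the three parts essentially independently, since they are of rather different natures. Part (a) is a counting statement about a group action, part (b) is a small-cases computation (with a genericity argument for $n\ge 4$), and part (c) is the conceptual heart, establishing a bijection between vertex maps $\Diamond_n\to\triangle_n$ of full rank and the set $\V(n)$ of full-dimensional $(n+1)$-tuples of vertices of $\Box_n$ surrounding the origin.

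\smallskip\noindent\emph{Part (a).}
First I would show that the left $\BC_n$-action on $\V(n)$ is free. An element of $\V(n)$ is an ordered $(n+1)$-tuple $(w_0,\dots,w_n)$ of vertices of $\Box_n$ whose convex hull is an $n$-simplex containing $0$ in its interior. If $\sigma\in\BC_n=\Aut(\Box_n)$ fixes this tuple pointwise, then $\sigma$ fixes $n+1$ affinely independent points, hence $\sigma=\ID$ as an affine map, hence $\sigma=\ID$ in $\BC_n$. Therefore every orbit has exactly $|\BC_n|=2^nn!$ elements, and $\#\V(n)=2^nn!\,\beta(n)$ by the orbit-counting bookkeeping. (One should note the left action does preserve $\V(n)$: post-composition by an automorphism of $\Box_n$ preserves the property of being a full-rank vertex map and preserves "$0\in\int(\Im f)$", since automorphisms of $\Box_n$ fix $0$.)

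\smallskip\noindent\emph{Part (b).}
For $n=1$: $\V(1)$ consists of the two ordered pairs $(-1,1)$ and $(1,-1)$ in $\Box_1=[-1,1]$, which form a single $\BC_1=\ZZ_2$-orbit, so $\beta(1)=1$. For $n=2$: a triangle with vertices among the four vertices of the square $\Box_2$ cannot contain the center $0$ in its interior — any such triangle uses three of the four corners, and $0$ lies on the diagonal edge joining the two "opposite" chosen corners, hence on the boundary; so $\V(2)=\emptyset$ and $\beta(2)=0$. For $n=3$: one exhibits the regular tetrahedron inscribed in $\Box_3$ (vertices $(1,1,1),(1,-1,-1),(-1,1,-1),(-1,-1,1)$, i.e. alternate vertices of the cube) which does contain $0=$ center in its interior; then one argues that any full-dimensional $4$-tuple of cube vertices enclosing $0$ is $\BC_3$-equivalent to this one — e.g. by checking that a $4$-element subset of $\vertex(\Box_3)$ spanning a simplex around the center must be one of the two "alternating" quadruples, and these are swapped by a reflection in $\BC_3$. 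For $n\ge4$: $\beta(n)\ge1$ because one can choose $n+1$ generic vertices of $\Box_n$ whose convex hull is a full-dimensional simplex with $0$ in the interior — for instance take $\pm e_1\pm\cdots\pm e_n$-type points; concretely the $n$ points $(-1,1,1,\dots,1),(1,-1,1,\dots,1),\dots$ together with $(1,1,\dots,1)$, or a small perturbation thereof, give a simplex whose barycenter is $0$ when one uses the $n$ points with a single $-1$ and appropriate weights. (I would just need one explicit witness, so I'd pick the cleanest one: the $n$ vertices $2e_i-(1,\dots,1)$ for $i=1,\dots,n$, whose centroid is $\frac{2}{n}(1,\dots,1)-(1,\dots,1)$; adjusting by including $(1,\dots,1)$ as the $(n+1)$st vertex with the right affine relation places $0$ strictly inside.) The point is only existence, so any such configuration suffices.

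\smallskip\noindent\emph{Part (c).}
This is where Corollary~\ref{convexposition1} and Proposition~\ref{calculus1} do the real work. Given $f\in\vertex^{(n)}(\Diamond_n,\triangle_n)$, by Proposition~\ref{calculus1}(b) we have $f_\surj=f\in\vertex(\Diamond_n,\Im f)$ with $\Im f$ an $n$-dimensional crosspolytope (it is the image of $\Diamond_n$, hence centrally symmetric w.r.t. $f(0)$, and full-dimensional in $\triangle_n$). By Corollary~\ref{convexposition1}(c), $f(\vertex(\Diamond_n))=\vertex(\Im f)$; since $f$ is a full-rank affine map from $\Aff(\Diamond_n)$ and $\vertex(\Diamond_n)$ has $2n$ points while $\vertex(\Im f)$ also has $2n$ points, $f$ restricted to vertices is a bijection respecting the central symmetry (it sends $\pm e_i$ to an antipodal pair about $f(0)$). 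The idea is now: dualize. An $n$-crosspolytope $\Im f\subset\triangle_n$ with center $c=f(0)\in\int(\triangle_n)$ is the same data as its polar $\Box$-type polytope inside the normal structure of $\triangle_n$ — more precisely, the $n$ pairs of parallel supporting hyperplanes of $\Im f$ cut $\triangle_n$, and the crosspolytope's facets (which are $2^n$ simplices, one per orthant) correspond to choices of one of each parallel pair. I would make this precise by observing that $\triangle_n$ has $n+1$ facets, and the requirement that $\Im f$ be imperturbable-looking forces each facet of $\triangle_n$ to carry a facet of $\Im f$ — but rather than go through Lemma~\ref{insidesimplex} here (which handles perturbation), the cleanest route is: the combinatorial type of $\Im f$ is $\Diamond_n$, which has $2n$ facets, and a crosspolytope full-dimensionally inside a simplex $\triangle_n$ with center interior is determined, up to the $\BC_n$-action on its own vertex-labelling, by which of its $2n$ facet-hyperplanes are "new" (not lying on $\partial\triangle_n$) versus supported on facets of $\triangle_n$; matching the $n+1$ facets of $\triangle_n$ to $n+1$ of the $2n$ facets of $\Diamond_n$ that form an "independent" family (no two antipodal) is exactly the data of a full-rank vertex map $\triangle_n\to\Box_n$ with $0$ in the interior of the image, i.e. an element of $\V(n)$. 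Concretely: send $f$ to the map $\triangle_n\to\Box_n$ whose $k$-th vertex of $\triangle_n$ goes to the vertex of $\Box_n$ recording, for each of the $n$ parallel-hyperplane pairs of $\Im f$, whether the $k$-th facet of $\triangle_n$ is "$+$" or "$-$" (or is one of the two $\Im f$-own facets in that direction). I would then check this assignment is well-defined, lands in $\V(n)$, and is invertible by running the same recipe backward (given $g\in\V(n)$, reconstruct the crosspolytope inside $\triangle_n$ by intersecting half-spaces, and the labelled vertex bijection $\Diamond_n\to\Im f$). Finally, I'd verify the reconstructed $f$ is genuinely a vertex of $\Hom(\Diamond_n,\triangle_n)$, which follows because $f(\vertex(\Diamond_n))\subset\vertex(\Im f)$ and $\Im f$ sits rigidly inside $\triangle_n$ (no perturbation of $\Im f$ inside $\triangle_n$ survives, since the $n+1$ facets of $\triangle_n$ pin down $n+1$ of its facets and that is enough constraints for an $n$-dimensional crosspolytope whose affine automorphism group acts with the right dimension count) — this rigidity is essentially Lemma~\ref{insidesimplex} applied to $\Im f$.

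\smallskip\noindent\emph{Main obstacle.}
The hard part is part (c): setting up the bijection requires carefully translating the geometry of "an $n$-crosspolytope inscribed in $\triangle_n$ with interior center" into the combinatorial data of an $(n+1)$-tuple of $\Box_n$-vertices, and making sure the correspondence is genuinely a bijection — in particular that the $\BC_n$-ambiguity in labelling the crosspolytope's vertices exactly matches the $\BC_n$-action built into $\V(n)$, and that every $g\in\V(n)$ really does reconstruct a crosspolytope that fits inside $\triangle_n$ (a dimension/positivity check on the defining inequalities). Parts (a) and (b) are routine by comparison, modulo producing the explicit witness in part (b) for $n\ge4$.
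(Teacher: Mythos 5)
Your overall strategy matches the paper's (freeness of the $\BC_n$-action for (a), small cases plus one explicit witness for (b), polarity for (c)), but part (c) as written has a genuine gap. The forward map only makes sense because, after normalizing $\Im f$ to $\Diamond_n$ by the inverse of the affine extension of $f$, \emph{every} facet of the transformed simplex supports $\Diamond_n$ along a facet, i.e.\ has outer normal in $\{\pm1\}^n=\vertex(\Box_n)$; only then does polarity produce an $(n+1)$-tuple of vertices of $\Box_n$. That every facet of $\triangle_n$ contains a facet of $\Im f$ is exactly Lemma \ref{insidesimplex} for full-rank $f$, and the paper's proof of (c) begins with it. You explicitly decline to ``go through Lemma \ref{insidesimplex}'' and substitute a combinatorial determination that does not deliver this fact: if some facet of $\triangle_n$ met $\Im f$ only in a vertex (or not at all), the corresponding polar point would lie in $\Box_n$ but not at a vertex of $\Box_n$, and your assignment into $\V(n)$ would not be defined. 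The surrounding combinatorics is also off: $\Diamond_n$ has $2^n$ facets, not $2n$, and $2^{n-1}$ pairs of parallel facet hyperplanes (the ``$n$ pairs'' you invoke belong to the polar parallelepiped), and ``no two antipodal'' among the $n+1$ chosen normals is much weaker than the actual condition $0\in\int\big(\conv(\epsilon_0,\ldots,\epsilon_n)\big)$ --- already for $n=3$ only $2$ of the $16$ antipode-free quadruples qualify. Finally, the converse direction (every element of $\V(n)$ reconstructs a genuine vertex map) is not ``Lemma \ref{insidesimplex} applied to $\Im f$'': that lemma is a necessary condition on vertex maps, whereas the converse needs the $n(n+1)$ tight incidences $f(v)\in F$ to cut out $f$ as a $0$-dimensional face of $\Hom(\Diamond_n,\triangle_n)$; your dimension count gestures at this but does not verify independence (the paper is admittedly terse on this point as well).

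Two smaller issues in (b). For $n=3$, your argument only shows the two \emph{unordered} alternating quadruples are $\BC_3$-equivalent; since $\V(3)$ consists of ordered tuples, you still need that every permutation of the tetrahedron's vertices is induced by an element of $\BC_3$ --- this is precisely the paper's reflection argument --- or, alternatively, count $\#\V(3)=2\cdot 4!=48=|\BC_3|$ and invoke the freeness established in (a). For $n\ge4$, your first proposed witness, $(1,\ldots,1)$ together with the points having a single $-1$, fails: all these points have positive coordinate sum, so $0$ is not even in their convex hull. Your parenthetical witness, $(1,\ldots,1)$ together with $2e_i-(1,\ldots,1)$, $i=1,\ldots,n$ (the antipode of the paper's witness), does work, with barycentric weights proportional to $(n-2,1,\ldots,1)$; this should be stated and checked rather than left as ``adjusting \ldots places $0$ strictly inside.''
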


\begin{proof} (a) follows from the fact that $\BC_n$ acts on $\V(n)$ freely.

\smallskip\noindent(b) The equality is obvious for $n=1,2$. For $n=3$, we first observe that $\V(n)/S_{n+1}$ has two elements, represented by the two maximal regular tetrahedra $\triangle,\triangle'\subset\Box_3$. Moreover, $\triangle$ can be mapped to $\triangle'$ by the $90^{\text o}$-rotation around the line through the centers of a pair of opposite facets of $\Box_3$. So it is enough to show that every automorphism of $\triangle$ extends to an automorphism of $\Box_3$. To this end, pick any two vertices $x,y\in\vertex(\triangle)$. The reflection of $\RR^3$ w.r.t. the affine plane, perpendicular to $[x,y]$ and through $\frac{x+y}2$, swaps $x$ and $y$ and leaves the other two vertices of $\triangle$ fixed. We are done because this reflection is an element of $\BC_3$ and transpositions generate $\Aut(\triangle)\cong S_4$.

For the inequalities we exhibit the following explicit element of $\V(n)$:
$$
\conv\big((-1,\ldots,-1),\big\{(1,\ldots,1)-2e_i\big\}_{i=1}^n\big)\subset\Box_n.
$$

\smallskip\noindent(c) Pick $f\in\vertex^{(n)}(\Diamond_n,\triangle_n)$. By Lemma \ref{insidesimplex}, every facet of $\triangle_n$ contains a facet of $f(\Diamond_n)$. By applying an affine isomorphism $\RR^n\to\RR^n$, transforming $f(\Diamond_n)$ into $\Diamond_n$, the simplex $\triangle_n$ gets transformed into an $n$-simplex, such that the condition on the facets is still satisfied. Dualizing, we get an element of $\V(n)$. This is a bijective correspondence.
\end{proof}

\smallskip\noindent\emph{Notice.} By computer assisted effective methods, we have computed the following values: $\beta(4)=5$ and $\beta(5)=408$.

When $n\ge4$, the vertex map $f:\Diamond_n\to\triangle_n$, corresponding to the explicit element of $\V(n)$ in the proof of Lemma \ref{diamondembed}(c), does not map $0$ to the barycenter of $\triangle_n$. To see this, we change $\triangle_n$ to the regular $n$-simplex $\triangle=\conv(e_1,\ldots,e_{n+1})\subset\RR^{n+1}$ and look at the corresponding vertex map $g:\Diamond_n\to\triangle$. Let $\gamma=\frac1{n+1}\sum_{i=1}^{n+1}e_i$. We want to show $g(0)\not=\gamma$. The dual to $\triangle$ w.r.t. $\gamma$ is a homothetic image $\triangle'$ of $\triangle$, centered at $\gamma$. If $g(0)=\gamma$ then $\triangle'$ sits in an $n$-parallelepiped the same way as $\conv\big((-1,\ldots,-1),\big\{(1,\ldots,1)-2e_i\big\}_{i=1}^n\big)$ sits in $\Box_n$, with $\gamma$ playing the same role in $\triangle'$ as $0$ in $\conv\big((-1,\ldots,-1),\big\{(1,\ldots,1)-2e_i\big\}_{i=1}^n\big)$. But this contradicts the equality $\sum_{\vertex(\triangle')}(v-\gamma)=0$.

It is interesting to remark that any simple $3$-dimensional polytope $P$ contains a homothetic copy $\Diamond$ of the octahedron $\Diamond_3$, such that $\vertex(\Diamond)\subset\partial P$ \cite{Akopyan}.

\medskip For two natural numbers $m$ and $n$, denote by $\Sigma(m,n)$ the set of maps
$$
f:\{1,\ldots,m\}\to\{\pm1,\ldots,\pm n\},
$$
such that
$$
\{|f(1)|,\ldots,|f(m)|\}=\{1,\ldots,n\}.
$$
Let $\sigma(m,n)=\#\Sigma(m,n)$. (So $\sigma(m,n)=0$ for $m<n$.)

\begin{lemma}\label{stirlingnumebrs} For all $m\ge n$ we have
$$
\sigma(m,n)=2^mT(m,k)=2^mn!S(m,n)=2^m\sum_{j=0}^n(-1)^{n-j}{n\choose j}j^m,
$$
where $T(m,n)$ is the number of surjective maps $\{1,\ldots,m\}\to\{1,\ldots,n\}$ and $S(m,n)$ is the Stirling number of the second kind, i.e., the number of partitions of $m$ objects into $n$ non-empty subsets.
\end{lemma}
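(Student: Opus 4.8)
The plan is to prove the chain of equalities $\sigma(m,n)=2^m T(m,n)=2^m n! S(m,n)=2^m\sum_{j=0}^n(-1)^{n-j}\binom{n}{j}j^m$ by a combination of a direct counting argument and citing standard facts about surjections and Stirling numbers. (I note in passing that the statement writes $T(m,k)$ where it surely means $T(m,n)$ — a typo to be kept in mind but not something the proof must belabor.)

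First I would establish $\sigma(m,n)=2^m T(m,n)$ by a factorization of the data defining an element of $\Sigma(m,n)$. A map $f:\{1,\ldots,m\}\to\{\pm1,\ldots,\pm n\}$ with $\{|f(1)|,\ldots,|f(m)|\}=\{1,\ldots,n\}$ is the same as a pair consisting of (i) the map $|f|:\{1,\ldots,m\}\to\{1,\ldots,n\}$, $j\mapsto|f(j)|$, which by the support condition is exactly a surjection, and (ii) a choice of sign $\operatorname{sgn}(f(j))\in\{+1,-1\}$ for each $j\in\{1,\ldots,m\}$, which is an arbitrary element of $\{\pm1\}^m$. Conversely any such pair reconstructs $f$ uniquely via $f(j)=\operatorname{sgn}(f(j))\cdot|f(j)|$, and the support condition holds precisely because $|f|$ is surjective. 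Hence $\#\Sigma(m,n)=2^m\cdot\#\{\text{surjections }\{1,\ldots,m\}\to\{1,\ldots,n\}\}=2^m T(m,n)$.

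Next, the identities $T(m,n)=n!\,S(m,n)=\sum_{j=0}^n(-1)^{n-j}\binom{n}{j}j^m$ are completely standard. For the first, a surjection $\{1,\ldots,m\}\to\{1,\ldots,n\}$ is the data of an ordered set partition of $\{1,\ldots,m\}$ into $n$ nonempty blocks (the fibers, in the order given by their images), so the number of surjections equals the number $S(m,n)$ of unordered partitions into $n$ nonempty blocks times the $n!$ orderings of the blocks. For the second, inclusion–exclusion on the set of elements of $\{1,\ldots,n\}$ that are actually hit: the number of maps $\{1,\ldots,m\}\to\{1,\ldots,n\}$ missing at least a prescribed $(n-j)$-element subset is $j^m$, so $T(m,n)=\sum_{j=0}^n(-1)^{n-j}\binom{n}{j}j^m$. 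I would simply cite these as classical facts (e.g. from any standard enumerative combinatorics reference), since they are not the content of the lemma. Multiplying through by $2^m$ gives all four expressions and completes the proof.

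There is essentially no obstacle here: the only real step is the sign/support factorization, and even that is immediate once one observes that the support condition on $f$ translates exactly into surjectivity of $|f|$ while the signs are free. The remaining equalities are textbook identities for surjection counts and Stirling numbers of the second kind, invoked rather than reproved. One small point to get right is the convention $\sigma(m,n)=0$ for $m<n$, which is consistent with $T(m,n)=S(m,n)=0$ in that range and with the empty support condition being unsatisfiable, so no separate case analysis is needed.
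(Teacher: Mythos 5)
Your proof is correct and is exactly the intended argument: the paper itself gives no proof of this lemma (it treats the sign/surjection factorization and the standard surjection--Stirling identities as known, merely citing the OEIS sequences A019538 and A008277), and your factorization $f\leftrightarrow(|f|,\operatorname{sgn}\circ f)$ together with the classical identities $T(m,n)=n!\,S(m,n)=\sum_{j=0}^n(-1)^{n-j}\binom{n}{j}j^m$ supplies precisely the missing details, including the correct observation that $T(m,k)$ in the statement is a typo for $T(m,n)$.
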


The numbers $T(m,n)$ and $S(m,n)$ are listed in \cite{Sloane} as, correspondingly, the sequences A019538 and A008277.

\begin{lemma}\label{symmetricintersection}
For any two $n$-dimensional simplices $\triangle,\triangle'\subset\RR^n$ we have
$$
\#\vertex(\triangle\cap\triangle')\le{2n+2\choose n+2}.
$$
\end{lemma}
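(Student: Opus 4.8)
The plan is to bound the number of vertices of $\triangle \cap \triangle'$ by classifying where each vertex sits in the two facet structures. Write $\triangle = \bigcap_{i=0}^{n} H_i^+$ and $\triangle' = \bigcap_{j=0}^{n} G_j^+$ as intersections of $n+1$ closed half-spaces each, where $H_i$ (resp. $G_j$) is the hyperplane spanning the $i$-th facet. Then $\triangle \cap \triangle'$ is cut out by these $2n+2$ half-spaces, so every vertex $v$ of $\triangle\cap\triangle'$ lies on the intersection of at least $n$ of the $2n+2$ bounding hyperplanes. First I would observe that $v$ cannot lie on all of $H_0,\dots,H_n$ (that intersection is empty, as the facet hyperplanes of a simplex have no common point) and likewise not on all of $G_0,\dots,G_n$; so among the $n$ (or more) hyperplanes through $v$, at least one comes from each simplex.

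Next I would set up the counting. For a vertex $v$, let $A(v)\subset\{0,\dots,n\}$ index the $H_i$ through $v$ and $B(v)\subset\{0,\dots,n\}$ index the $G_j$ through $v$; we have $|A(v)|+|B(v)|\ge n$, $|A(v)|\le n$, $|B(v)|\le n$, and — if $v$ is a genuine vertex of the $n$-polytope $\triangle\cap\triangle'$ and we are in general position — we can assume $|A(v)|+|B(v)| = n$ with the hyperplanes in general position (perturbing $\triangle,\triangle'$ slightly only increases the vertex count, by a standard semicontinuity argument, so it suffices to prove the bound generically). Thus each vertex is determined by a choice of a subset $S\subset\{H_0,\dots,H_n,G_0,\dots,G_n\}$ with $|S|=n$, $S\not\supset\{H_0,\dots,H_n\}$, $S\not\supset\{G_0,\dots,G_n\}$, and in general position such an $S$ determines at most one point. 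But that crude count gives $\binom{2n+2}{n}-2$, which is larger than the claimed $\binom{2n+2}{n+2}=\binom{2n+2}{n}$... so a further idea is needed: not every such $S$ yields a point lying inside \emph{both} simplices.

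The sharpening I expect to need: think of it dually. A point on $H_{i_1}\cap\cdots\cap H_{i_k}$ with $k = |A(v)|$ lies on the face of $\triangle$ which is the convex hull of the $n+1-k$ vertices of $\triangle$ \emph{not} opposite those facets; similarly for $\triangle'$. For $v$ to be in $\triangle\cap\triangle'$ one needs these two faces to actually meet, and their dimensions are $n-k$ and $n-\ell$ where $\ell=|B(v)|$, with $(n-k)+(n-\ell) = n$ — i.e. they are complementary-dimensional faces meeting in a point, which is exactly the generic transverse-intersection picture. The count of such incidences is governed by choosing a $k$-subset of $\{0,\dots,n\}$ and an $\ell$-subset of $\{0,\dots,n\}$ with $k+\ell=n$, $k\le n$, $\ell\le n$; that is $\sum_{k=1}^{n-1}\binom{n+1}{k}\binom{n+1}{n-k}$, which by Vandermonde equals $\binom{2n+2}{n} - 2\binom{n+1}{0}\binom{n+1}{n} = \binom{2n+2}{n} - 2(n+1)$. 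This is still not quite $\binom{2n+2}{n+2}$; the genuine refinement is that the two faces being complementary in $\RR^n$ forces them to span $\RR^n$ affinely, which eliminates further choices — specifically the constraint should be reformulated as: $v$ is the transverse intersection of a face $E\subset\triangle$ and a face $E'\subset\triangle'$ with $\dim E + \dim E' = n$ and $\Aff(E)\cap\Aff(E')$ a single point, and one counts pairs $(E,E')$. I would then match this against the identity $\binom{2n+2}{n+2}=\sum_{k}\binom{n+1}{k+1}\binom{n+1}{n-k-1}$ (Vandermonde again), identifying the index shift with the fact that a facet hyperplane $H_i$ of $\triangle$ corresponds to \emph{omitting} vertex $i$, so a $k$-fold facet intersection is an $(n-k)$-face, i.e. the convex hull of $n+1-k$ vertices; the "$+1$" shifts in the binomials come from the forbidden cases (empty face, whole simplex) being excluded on each side.

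The main obstacle is exactly this last bookkeeping step: showing that the geometrically realizable vertices are in bijection (or injection) with the pairs counted by $\binom{2n+2}{n+2}$, and not merely the larger $\binom{2n+2}{n}$. The cleanest route is probably: (i) reduce to general position by perturbation and upper-semicontinuity of the vertex count; (ii) show each vertex of $\triangle\cap\triangle'$ is the unique transverse meeting point of a proper nonempty face $E\subsetneq\triangle$ and a proper nonempty face $E'\subsetneq\triangle'$ with $\dim E+\dim E'=n$; (iii) bound the number of such pairs $(E,E')$ that can simultaneously be "inward" (i.e. with the meeting point satisfying all $2n+2$ inequalities) — here I would argue that for fixed $E$ of dimension $d$, the faces $E'$ of dimension $n-d$ whose affine hull meets $\Aff(E)$ inside both simplices are constrained, and summing gives the Vandermonde identity for $\binom{2n+2}{n+2}$. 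If step (iii) resists a clean combinatorial argument, the fallback is to invoke the known sharp bound on the number of vertices in an intersection of two simplices (a McMullen–type upper-bound computation), but I would try the direct face-pairing count first.
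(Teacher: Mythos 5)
There is a genuine gap, and it comes in two parts. First, your structural claim that each vertex of $\triangle\cap\triangle'$ lies on at least one facet hyperplane of \emph{each} simplex (equivalently, your later restriction to pairs of \emph{proper} faces $E\subsetneq\triangle$, $E'\subsetneq\triangle'$) is false: a vertex of $\triangle$ lying in $\int(\triangle')$ is a vertex of the intersection, it lies on $n$ hyperplanes $H_i$ and on no $G_j$, and the only face pair available for it is (vertex of $\triangle$, all of $\triangle'$). Your refined count $\sum_{k=1}^{n-1}\binom{n+1}{k}\binom{n+1}{n-k}$ omits exactly these vertices, so it is not an upper bound for $\#\vertex(\triangle\cap\triangle')$ at all; the extreme terms must be kept. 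The paper's proof does precisely this: to each vertex $v$ it assigns faces $\tilde F\subset\triangle$, $\tilde G\subset\triangle'$ (either one allowed to be the whole simplex) with $\dim\tilde F+\dim\tilde G=n$ and $\tilde F\cap\tilde G=\{v\}$, and counts \emph{all} such pairs, $\sum_{p=0}^{n}\binom{n+1}{p+1}\binom{n+1}{n+1-p}=\binom{2n+2}{n+2}$ by Chu--Vandermonde. The injectivity of $v\mapsto(\tilde F,\tilde G)$ is automatic since the pair determines the point, so no further ``inwardness'' bookkeeping -- the step (iii) you acknowledge you cannot close -- is needed.

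Second, the arithmetic that derailed you: since $\binom{2n+2}{n+2}=\binom{2n+2}{n}$, your ``crude'' count of $n$-element subsets of the $2n+2$ facet hyperplanes is not larger than the claimed bound -- it \emph{is} the claimed bound. Had you noticed this, your first route would essentially finish, except that it rests on two unproved reductions: that each vertex in general position lies on exactly $n$ of the hyperplanes which then determine it, and, more seriously, the semicontinuity assertion that perturbing $\triangle,\triangle'$ to general position never decreases the vertex count of the intersection. That last statement is a nontrivial lemma (of upper-bound-theorem type), not something to wave at. The paper avoids genericity altogether: at a possibly degenerate vertex $v$, writing $v=\int(F)\cap\int(G)$ and expressing $F$ and $G$ as intersections of facets, it invokes Lemma~\ref{helly} to extract $n$ of those facet hyperplanes meeting exactly in $v$, which produces the complementary-dimensional pair $\tilde F\supset F$, $\tilde G\supset G$ directly, with no perturbation and no case analysis on degeneracies. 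So the intended bound is correct and your face-pairing instinct is the right one, but as written the argument both excludes vertices it must count and leaves its key counting step, by your own admission, unproved.
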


The following table explains why one should expect a substantial improvement in the upper bound for $\#\vertex(\triangle\cap\triangle')$ in the relevant case when $\triangle$ and $\triangle'$ are mutually centrally symmetric; see the notice after the proof of Corollary \ref{numberdiamondsimplex}. The five entries row-wise are, correspondingly, the dimension $n$, $\#\vertex\big(\triangle_n\cap(2z-\triangle_n)\big)$ for a small random perturbation $z$ of the barycenter of $\triangle_n$ (conjecturally maximizing the number of vertices), $\#\vertex(\triangle\cap\triangle')$ for some randomly generated $n$-simplices $\triangle,\triangle'\subset\RR^n$ (up to dimension 13),  ${2n+2\choose n+2}$, and the percentage of the second number to the fourth:
$$
\begin{matrix}
3&12&12&56&21.43\ \%\\
4&30&31&210&14.29\ \%\\
5&60&64&792&7.576\ \%\\
6&140&144&3003&4.662\ \%\\
7&280&334&11440&2.448\ \%\\
8&630&781&43758&1.440\ \%\\
9&1260&1586&167960&0.7502\ \%\\
10&2772&3623&646646&0.4287\ \%\\
11&5544&8912&2496144&0.2221\ \%\\
12&12012&18155&9657700&0.1244\ \%\\
13&24024&43678&37442160&0.06416\ \%\\
14&51480&-&145422675&0.03540\ \%\\
15&102960&-&565722720&0.01820\ \%\\
\end{matrix}
$$

\begin{proof}[Proof of Lemma \ref{symmetricintersection}]
Assume $\{v_1,\ldots,v_m\}=\vertex(\triangle\cap\triangle')$ for some $m\in\NN$.

For every index $i\in\{1,\ldots,m\}$ we let $F_i\subset\triangle$ and $G_i\subset\triangle'$ be the faces, uniquely determined by the condition $v_i=\int(F_i)\cap\int(G_i)$.

For every index $i\in\{1,\ldots,m\}$ there are faces $\tilde F_i\subset\triangle$ and $\tilde G_i\subset\triangle'$, such that $F_i\subset\tilde F_i$, $G_i\subset\tilde G_i$, $v_i=\tilde F_i\cap\tilde G_i$, and $\dim\tilde F_i+\dim\tilde G_i=n$. In fact, if $F_i=A_1\cap\cdots\cap A_k$ and $G_i=B_1\cap\cdots\cap B_l$ for the corresponding facets $A_1,\ldots,A_k\in\FF(\triangle)$ and $B_1,\ldots,B_l\in\FF(\triangle')$, where $k=\codim F_i$ and $l=\codim G_i$, then Lemma \ref{helly} implies $C_1\cap\cdots\cap C_n=v_i$ for some $n$-element subset
$$
\{C_1,\ldots,C_n\}\subset\{A_1,\ldots,A_k,B_1,\ldots,B_l\}.
$$
So we can choose
$$
\tilde F_i=\bigcap_{C_j\in\{A_1,\ldots,A_k\}} C_j,\qquad \tilde G_i=\bigcap_{C_j\in\{B_1,\ldots,B_l\}}C_j.
$$

The existence of the $\tilde F_i$ and $\tilde G_i$ implies
$$
m\le\sum_{p=0}^n{n+1\choose p+1}{n+1\choose n+1-p}=\sum_{q=1}^{n+1}{n+1\choose q}{n+1\choose n+2-q}={2n+2\choose n+2},
$$
the last equality resulting from the Chu-Vandermonde identity \cite[Chapter 1]{RioCOMB}.
\end{proof}

Our main result in this section is the following

\begin{theorem}\label{diamondtosimplex}
Let $m\ge n\ge 2$ be natural numbers and $P$ an $n$-polytope.
\begin{itemize}
\item[(a)]
Every element $f\in\vertex^{(n)}(\Diamond_m,P)$ fits in a commutative diagram
\[
\begin{tikzpicture}[description/.style={fill=white,inner sep=2pt}]
\matrix (m) [matrix of math nodes, row sep=3em,
column sep=2.5em, text height=1.5ex, text depth=0.25ex]
{ \Diamond_m & & P \\
& \Diamond_n & \\ };
\path[->,font=\scriptsize]
(m-1-1) edge node[auto] {$ f $} (m-1-3)
(m-2-2) edge node[auto, swap] {$\nu_2$} (m-1-3)
edge node[auto] {$\nu_1$} (m-1-1);
\end{tikzpicture}
\]
where $\nu_1$ and $\nu_2$ are injective vertex maps and $\nu_1(0)=0$.
\item[(b)] $\#\{f\in\vertex^{(n)}(\Diamond_m,\triangle_n)\ |\ \Im f\cong\Diamond_n\}=\sigma(m,n)\beta(n)$.
\item[(c)] All elements of $\vertex^{(n)}(\Diamond_m,\triangle_n)$ satisfy the condition $\Im f\cong\Diamond_n$ if and only if either $m=n$ or $n=3$. (Figure 3 represents the claim (c) for $n=3$.)
\begin{figure}[htb]
\includegraphics[trim=50mm 180mm 50mm 40mm, clip]{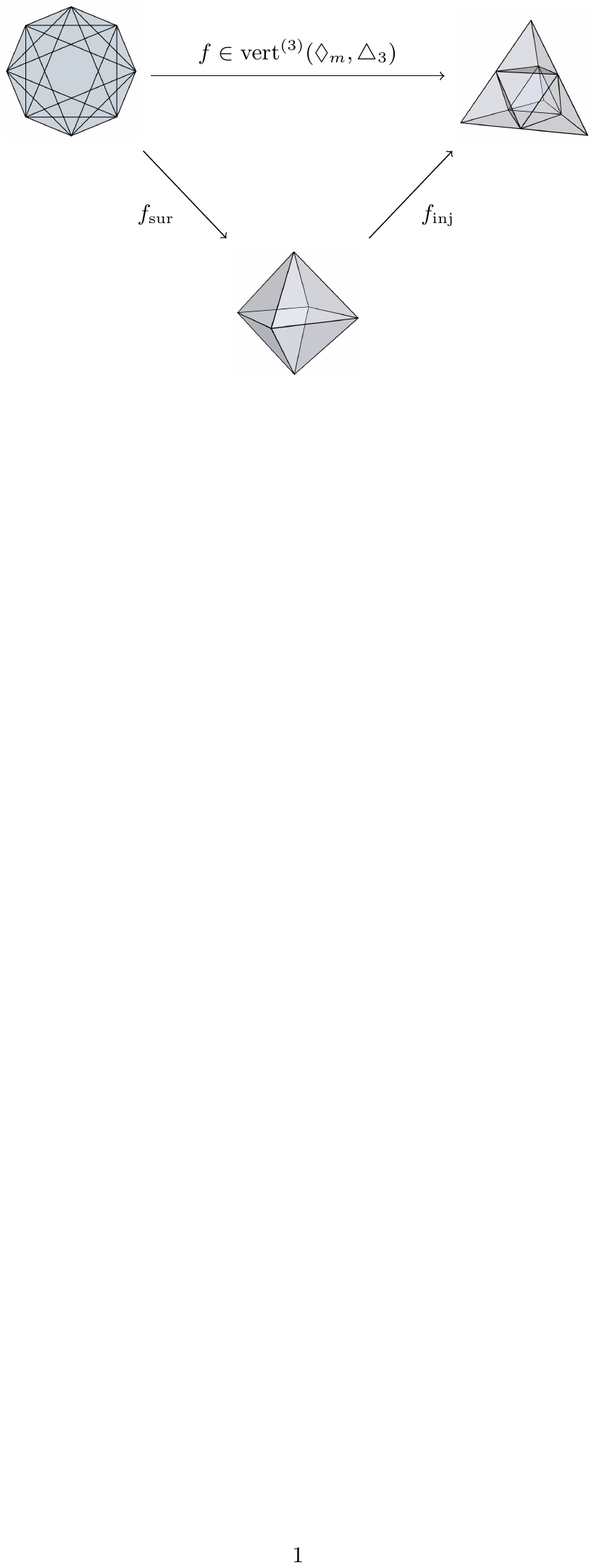}
\caption{}
\end{figure}
\end{itemize}
\end{theorem}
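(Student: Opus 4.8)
The plan is to prove the three parts in the order (a), then (b), then (c), using (a) as the structural backbone for (b) and Lemma \ref{diamondembed} together with Lemma \ref{simplexindiamond} for the sharp dichotomy in (c).

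For part (a), I would start from $f\in\vertex^{(n)}(\Diamond_m,P)$. By Corollary \ref{convexposition1}(c), $f(\vertex(\Diamond_m))=\vertex(\Im f)$, and since $\Im f$ is an $n$-dimensional centrally symmetric polytope (being the image of a crosspolytope under an affine map that sends $\pm v$ to antipodal points about $f(0)$), $\vertex(\Im f)$ comes in antipodal pairs about $f(0)$. The idea is to pick a \emph{minimal} set of antipodal pairs $\{\pm v_1\},\dots,\{\pm v_r\}$ among the $\pm e_i$ whose images still give an $n$-dimensional polytope with $f(0)$ in its interior; the sub-crosspolytope $\Diamond=\conv(\pm v_1,\dots,\pm v_r)\subset\Diamond_m$ is then genuinely $n$-dimensional (so $r=n$ after perhaps passing through a lower-dimensional sub-crosspolytope and re-embedding). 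Set $\nu_1$ to be the inclusion $\Diamond\hookrightarrow\Diamond_m$ composed with an affine automorphism of $\Diamond_m$ (using $\BC_m$) straightening $\Diamond$ to the standard $\Diamond_n$ sitting on coordinate axes with $\nu_1(0)=0$, and set $\nu_2=f\circ\nu_1$. That $\nu_2\in\vertex(\Diamond_n,P)$ follows from $f\in\vertex(\Diamond_m,P)$ by the perturbation criterion: a smooth perturbation of $\nu_2=f|_{\Diamond_n}$ would extend (keeping $f$ fixed on $\vertex(\Diamond_m)\setminus\vertex(\Diamond)$) to a smooth perturbation of $f$, contradicting Lemma \ref{perturbation}. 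Injectivity of $\nu_2$ is because $\dim\Im\nu_2=\dim\Im f=n=\dim\Diamond_n$. The main obstacle here is ensuring minimality genuinely forces $r=n$ rather than $r<n$ with $f$ collapsing a proper sub-crosspolytope onto something $n$-dimensional — this is impossible because an affine image of an $r$-crosspolytope has affine dimension $\le r$, so $r\ge n$, and $r\le n$ by choosing $n$ independent image directions; the re-embedding via $\BC_m$ handles the normalization $\nu_1(0)=0$.

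For part (b), combine (a) with Lemma \ref{diamondembed}(c). Every $f\in\vertex^{(n)}(\Diamond_m,\triangle_n)$ with $\Im f\cong\Diamond_n$ factors as $f=\nu_2\circ\tilde f$ where $\tilde f:\Diamond_m\to\Diamond_n$ is the surjective composite (a vertex map onto $\Diamond_n$, by Proposition \ref{calculus1}(b)) and $\nu_2:\Diamond_n\hookrightarrow\triangle_n$ is an injective vertex map with $\Im\nu_2\cong\Diamond_n$. The surjective vertex maps $\Diamond_m\to\Diamond_n$ that send vertices to vertices are exactly parameterized by $\Sigma(m,n)$: such a map is determined by where it sends $e_1,\dots,e_m$ among $\{\pm e_1,\dots,\pm e_n\}$, subject to surjectivity $\{|f(i)|\}=\{1,\dots,n\}$ (automatically then $-e_i\mapsto -f(e_i)$), giving $\sigma(m,n)$ choices; each is a vertex map by Proposition \ref{calculus1}(a). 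The injective vertex maps $\Diamond_n\hookrightarrow\triangle_n$ with full-dimensional image are counted, after dualizing inside $\Aff(\Im f)$ exactly as in the proof of Lemma \ref{diamondembed}(c), by $\#\V(n)/|\BC_n|=\beta(n)$ — here one uses that two factorizations $\nu_2\circ\tilde f=\nu_2'\circ\tilde f'$ differ by a $\BC_n$-automorphism of the middle $\Diamond_n$, which is absorbed into the $\BC_n$-action defining $\beta(n)$. Multiplying, $\#\{f\mid\Im f\cong\Diamond_n\}=\sigma(m,n)\beta(n)$. The delicate point is the bookkeeping of the $\BC_n$-ambiguity in the factorization so that no overcounting or undercounting occurs — this is exactly why $\beta(n)$ (the \emph{orbit} count) rather than $\#\V(n)$ appears.

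For part (c), the ``if'' direction: when $m=n$, Lemma \ref{diamondembed}(c) already identifies $\vertex^{(n)}(\Diamond_n,\triangle_n)$ with $\V(n)$, and every such map has $\Im f\cong\Diamond_n$ by construction. When $n=3$: given $f\in\vertex^{(3)}(\Diamond_m,\triangle_3)$, by (a) $\Im f$ is a centrally symmetric $3$-polytope inscribed in $\triangle_3$ with each facet of $\triangle_3$ containing a facet of $\Im f$ (Lemma \ref{insidesimplex}); a centrally symmetric $3$-polytope has an even number of facets and at least $6$, while being inscribed in a tetrahedron with a facet on each of the $4$ facet-hyperplanes forces, by a short case analysis on the possible centrally symmetric polytopes realizable this way, $\Im f\cong\Diamond_3$ (octahedron is the unique centrally symmetric polytope with $\le 6$ facets sitting this way; more facets are geometrically obstructed inside a tetrahedron respecting the facet condition). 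For the ``only if'' direction, I would exhibit, whenever $n\ge4$ and $m>n$, an explicit $f\in\vertex^{(n)}(\Diamond_m,\triangle_n)$ with $\Im f\not\cong\Diamond_n$: take the sub-crosspolytope on $e_1,\dots,e_n,-e_1,\dots,-e_n$ mapped identically and collapse the extra pairs $\pm e_{n+1},\dots,\pm e_m$ onto, say, $\pm(e_1+\dots)$-type combinations landing on existing vertices but forcing $\Im f$ to acquire an extra pair of vertices beyond $2n$ — or, more cleanly, use that for $n\ge4$ the element of $\V(n)$ from Lemma \ref{diamondembed}(c) does not send $0$ to the barycenter (the computation already in the excerpt after Lemma \ref{diamondembed}), and build from it an $f$ from some $\Diamond_m$, $m>n$, whose image is a centrally symmetric polytope with more than $2n$ vertices inscribed in $\triangle_n$ satisfying the facet condition. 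The hard part will be this existence construction for general $n\ge4$ and the verification that the resulting $f$ is genuinely a vertex map (via Proposition \ref{calculus1}(a), reducing to showing $f$ sends vertices to vertices) and that its image is not a crosspolytope; the $n=3$ exclusion of larger centrally symmetric images inside a tetrahedron is the other place needing care, handled by the combinatorics of normal fans (a facet of $\Im f$ on each facet-hyperplane of $\triangle_3$ plus central symmetry pins down the combinatorial type).
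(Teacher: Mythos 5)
There is a genuine gap, and it sits exactly at the crux of part (a). You claim that $\nu_2=f|_{\Diamond}$ must be a vertex map because any perturbation $\{g_t\}$ of $f|_{\Diamond}$ would extend to a perturbation of $f$ ``keeping $f$ fixed on $\vertex(\Diamond_m)\setminus\vertex(\Diamond)$.'' That extension exists only if $g_t(0)=f(0)$ for all $t$: an affine map on $\Diamond_m$ is determined by its vertex values subject to the constraint that all midpoints $\tfrac12\big(f_t(e_i)+f_t(-e_i)\big)$ coincide (they all equal $f_t(0)$), so once the perturbation of the restriction moves the image of the center you cannot freeze the remaining antipodal pairs and stay affine. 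Thus your argument only establishes the easy first observation of the paper's proof, namely that any perturbation of a restriction must move the center; in effect you are asserting that the restriction of $f$ to \emph{every} sub-crosspolytope with $n$-dimensional image is a vertex map, which is far stronger than what is true or needed. The paper instead argues by contradiction over \emph{all} $n$-element index subsets simultaneously: for each index $i$ it records the subspace $V_i=\lin(F_i)+\lin(G_i)$ (where $F_i,G_i$ are the faces of $P$ and $2f(0)-P$ whose relative interiors contain $f(e_i)$, $f(-e_i)$) along which the center can move, uses Lemma \ref{helly} to conclude $\bigcap_{i=1}^m V_i\neq 0$, and then builds from a vector in this intersection an explicit affine perturbation of $f$ itself via the singleton intersections $F_i\cap(2\tau-G_i)$, contradicting Lemma \ref{perturbation}. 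None of this machinery is present in your proposal, and without it part (a) is unproven.

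The later parts inherit problems as well. In (b) you never verify that every composite of a surjective vertex map $\Diamond_m\to\Diamond_n$ with an injective vertex map $\Diamond_n\to\triangle_n$ is itself a vertex of $\Hom(\Diamond_m,\triangle_n)$ (the composite does not send vertices to vertices of $\triangle_n$, so Proposition \ref{calculus1}(a) does not apply directly; the paper gives a separate perturbation argument for this). In (c) your ``only if'' construction for $n\ge4$, $m>n$ is left as a sketch you yourself flag as the hard part; the paper does it concretely by the facet count $2^n>2(n+1)$, which yields a pair of opposite vertices of $\triangle_n\cap(2f(0)-\triangle_n)$ outside $\Im f$ to which the extra antipodal pairs of $\Diamond_m$ are sent, with vertexhood supplied by Corollary \ref{numberdiamondsimplex}(a). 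Your $n=3$ case analysis is also shakier than needed (the octahedron has $8$ facets, not $\le6$); the paper avoids any such classification by combining $\Im g=\triangle_3\cap(2g(0)-\triangle_3)$ from Lemma \ref{diamondembed} with Corollary \ref{convexposition1}(c) to force $\Im f=\Im g$.
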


\begin{corollary}\label{numberdiamondsimplex}
Let $m$ and $n$ be natural numbers, $m\ge2$, and $P$ be an $n$-polytope.
\begin{itemize}
\item[(a)] If $m\ge n$ then the elements $f\in\vertex^{(n)}(\Diamond_m,P)$ are exactly the maps $f:\Diamond_m\to P$ admitting a subset $\{i_1,\ldots,i_n\}\subset\{1,\ldots,m\}$ such that $f|_{\Diamond}\in\vertex^{(n)}(\Diamond,P)$ and $\{f(e_j),f(-e_j)\}=\{v_j,2f(0)-v_j\}$, where $\Diamond=\conv(\pm e_{i_1},\ldots,\pm e_{i_n})$ and $v_j\in\vertex(P\cap(2f(0)-P))$ for $j\in\{1,\ldots,n\}\setminus\{i_1,\ldots,i_n\}$.
\item[(b)]
\begin{align*}
\#\vertex(\Diamond_m,\triangle_n)=1+n+&2^{m-1}n(n+1)+{n+1\choose4}\sigma(m,3)+\\
&+\sum_{k=4}^{\min(m,n)}{n+1\choose k+1}\cdot\#\vertex^{(k)}(\Diamond_m,\triangle_k).
\end{align*}
\item[(c)] For all $k\le m$ we have
$$
\sigma(m,k)\beta(k)\le\#\vertex^{(k)}(\Diamond_m,\triangle_k)\le\frac{2^km!}{(m-k)!}{2k+2\choose k+2}^{m-k}\beta(k).
$$
\end{itemize}
\end{corollary}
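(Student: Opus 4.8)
Deduce (a) by combining the factorization of Theorem~\ref{diamondtosimplex}(a) with Corollary~\ref{convexposition1}(c); then extract (b) and (c) from (a) together with the stratification of $\Hom(\Diamond_m,\triangle_n)$ by rank.

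\smallskip
\emph{Part (a).} The case $n\le1$ is an elementary consequence of Proposition~\ref{calculus1}(c), so assume $n\ge2$. If $f\in\vertex^{(n)}(\Diamond_m,P)$, Theorem~\ref{diamondtosimplex}(a) supplies injective vertex maps $\nu_1\colon\Diamond_n\to\Diamond_m$, $\nu_2\colon\Diamond_n\to P$ with $\nu_1(0)=0$ and $f\circ\nu_1=\nu_2$. Corollary~\ref{convexposition1}(c) applied to $\nu_1$ gives $\nu_1(\vertex(\Diamond_n))=\vertex(\Im\nu_1)\subset\vertex(\Diamond_m)$; since $\nu_1$ is affine with $\nu_1(0)=0$ it sends antipodal vertex pairs to antipodal vertex pairs, so by injectivity $\Im\nu_1=\conv(\pm e_{i_1},\dots,\pm e_{i_n})=:\Diamond$ for distinct indices. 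Then $f|_\Diamond=\nu_2\circ\nu_1^{-1}$ is $\nu_2$ precomposed with a polytope isomorphism, hence an element of $\vertex^{(n)}(\Diamond,P)$; and for $j\notin\{i_1,\dots,i_n\}$ the relation $f(-e_j)=2f(0)-f(e_j)$ is automatic while $f(e_j)\in\vertex(\Im f)\subset\vertex(P\cap(2f(0)-P))$ by Corollary~\ref{convexposition1}(c). Conversely, writing $S=\{i_1,\dots,i_n\}$ and $\Diamond=\Diamond_S$, the restriction $r\colon g\mapsto g|_{\Diamond_S}$ is an affine map of polytopes $\Hom(\Diamond_m,P)\to\Hom(\Diamond_S,P)$, so $\Phi:=r^{-1}(f|_{\Diamond_S})$ is a face of $\Hom(\Diamond_m,P)$ containing $f$; evaluating at the vertices $e_j$ with $j\notin S$ identifies $\Phi$ affinely with $\prod_{j\notin S}\big(P\cap(2f(0)-P)\big)$, whose vertices are the tuples of vertices of the factors. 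Since each $f(e_j)$ is such a vertex, $f$ is a vertex of $\Phi$, hence of $\Hom(\Diamond_m,P)$, and $\rank(f|_\Diamond)=n\le\rank f\le\dim P=n$.

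\smallskip
\emph{Part (b).} Write $\#\vertex(\Diamond_m,\triangle_n)=\sum_{k\ge0}\#\vertex^{(k)}(\Diamond_m,\triangle_n)$. For a $k$-face $G\subset\triangle_n$ the set $\Hom(\Diamond_m,G)$ is a \emph{face} of $\Hom(\Diamond_m,\triangle_n)$: expressing $G$ as an intersection of faces cut out by supporting hyperplanes $\{\ell_i=c_i\}$ of $\triangle_n$, each functional $g\mapsto\ell_i(g(v))$ with $v\in\vertex(\Diamond_m)$ is affine on $\Hom(\Diamond_m,\triangle_n)$, bounded above by $c_i$, and attains $c_i$ exactly on $\Hom(\Diamond_m,G)$. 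By Lemma~\ref{insidesimplex} every $f\in\vertex^{(k)}(\Diamond_m,\triangle_n)$ has a unique carrier face $G_f$ of dimension $k$, and $f\in\vertex^{(k)}(\Diamond_m,G_f)$ by Corollary~\ref{convexposition1}(a); conversely $\vertex^{(k)}(\Diamond_m,G)\subset\vertex^{(k)}(\Diamond_m,\triangle_n)$ for every $k$-face $G$. Hence $\#\vertex^{(k)}(\Diamond_m,\triangle_n)=\binom{n+1}{k+1}\#\vertex^{(k)}(\Diamond_m,\triangle_k)$, the binomial counting the $k$-faces $G\cong\triangle_k$. It remains to compute the low-rank terms: $k=0$ contributes the $n+1$ constant maps to vertices (Theorem~\ref{folklore}(d)); $k=1$ contributes $\binom{n+1}{2}\cdot 2^m$, since $\Hom(\Diamond_m,\triangle_1)\cong\Hom(\Diamond_m,\Box_1)\cong\Diamond(\Box_m)$ by Proposition~\ref{calculus}(b) has exactly $2^m$ non-apex (hence rank-$1$) vertices; $k=2$ contributes $0$, for a rank-$2$ vertex map $\Diamond_m\to\triangle_2$ would by Theorem~\ref{diamondtosimplex}(a) restrict to an injective rank-$2$ vertex map $\Diamond_2\to\triangle_2$, but $\Diamond_2\cong\Box_2$ and $\vertex^{(2)}(\Box_2,\triangle_2)=\emptyset$ by Theorem~\ref{boxtriangle}; and $k=3$ contributes $\binom{n+1}{4}\sigma(m,3)\beta(3)=\binom{n+1}{4}\sigma(m,3)$ by Theorem~\ref{diamondtosimplex}(b,c) and $\beta(3)=1$ (Lemma~\ref{diamondembed}(b)). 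Using $\binom{n+1}{2}2^m=2^{m-1}n(n+1)$ and collecting terms yields the asserted formula.

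\smallskip
\emph{Part (c).} The lower bound is immediate from Theorem~\ref{diamondtosimplex}(b): the set $\{f\in\vertex^{(k)}(\Diamond_m,\triangle_k)\colon\Im f\cong\Diamond_k\}$ already has $\sigma(m,k)\beta(k)$ elements. For the upper bound, part (a) encodes each $f\in\vertex^{(k)}(\Diamond_m,\triangle_k)$ by: a $k$-subset $S\subset\{1,\dots,m\}$ ($\binom{m}{k}$ choices); the restriction $f|_{\Diamond_S}$, an element of $\vertex^{(k)}(\Diamond_S,\triangle_k)$, of which there are $\#\vertex^{(k)}(\Diamond_k,\triangle_k)=\#\V(k)=2^kk!\beta(k)$ by Lemma~\ref{diamondembed}(a,c) (the range $k\ge2$; $k\le1$ is a direct check); and, for each of the $m-k$ indices $j\notin S$, the value $f(e_j)\in\vertex(\triangle_k\cap(2f(0)-\triangle_k))$ — note that $f(0)$ is already determined by $f|_{\Diamond_S}$ — of which there are at most $\binom{2k+2}{k+2}$ by Lemma~\ref{symmetricintersection}, the value $f(-e_j)=2f(0)-f(e_j)$ being forced. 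Since by part (a) this encoding is surjective onto $\vertex^{(k)}(\Diamond_m,\triangle_k)$, we get $\#\vertex^{(k)}(\Diamond_m,\triangle_k)\le\binom{m}{k}\,2^kk!\,\beta(k)\,\binom{2k+2}{k+2}^{m-k}$, and $\binom{m}{k}k!=m!/(m-k)!$ gives the stated bound.

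\smallskip
\emph{Main obstacle.} Parts (a) and (c) are essentially bookkeeping once Theorems~\ref{diamondtosimplex} and~\ref{boxtriangle} and Lemmas~\ref{diamondembed},~\ref{symmetricintersection} are in hand. The delicate points, both in part (b), are: (i) verifying that $\Hom(\Diamond_m,G)$ is a genuine face of $\Hom(\Diamond_m,\triangle_n)$, so that the rank strata decompose as a disjoint union indexed by the faces of $\triangle_n$ of matching dimension; and (ii) the three low-rank evaluations — in particular $\#\vertex^{(2)}(\Diamond_m,\triangle_2)=0$, which is exactly where Theorem~\ref{boxtriangle} is needed, and $\#\vertex^{(3)}(\Diamond_m,\triangle_3)=\sigma(m,3)$, which combines both parts (b) and (c) of Theorem~\ref{diamondtosimplex}.
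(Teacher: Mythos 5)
Your proposal is correct and follows essentially the same route as the paper: stratification of $\vertex(\Diamond_m,\triangle_n)$ by rank and carrier face via Lemma \ref{insidesimplex}, the reformulation of Theorem \ref{diamondtosimplex}(a) (through Proposition \ref{calculus1}(a) and Corollary \ref{convexposition1}(b,c)) for part (a), and the counts from Lemmas \ref{diamondembed} and \ref{symmetricintersection} for part (c). Your local variants --- realizing the fiber of the restriction map as a face isomorphic to $\prod_{j\notin S}\big(P\cap(2f(0)-P)\big)$ instead of invoking the perturbation criterion verbatim, encoding a vertex map by the data $\big(S,\ f|_{\Diamond},\ (f(e_j))_{j\notin S}\big)$ rather than counting pairs $(\nu_1,\nu_2)$ and dividing by the free $\BC_k$-action, and deducing $\vertex^{(2)}(\Diamond_m,\triangle_2)=\emptyset$ from Theorem \ref{boxtriangle} together with $\Diamond_2\cong\Box_2$ instead of from $\beta(2)=0$ --- are equivalent in substance and produce exactly the stated formulas and bounds.
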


\begin{proof} (a) That the mentioned maps $f:\Diamond_m\to P$ are vertex maps follows from the perturbation criterion, and that no element of $\vertex^{(n)}(\Diamond_m,P)$ is left out is the contents of Theorem \ref{diamondtosimplex}(a), with help from Proposition \ref{calculus1}(a) and Corollary \ref{convexposition1}(b).

\medskip\noindent(b) Lemma \ref{insidesimplex} implies
$$
\#\vertex(\Diamond_m,\triangle_n)=\sum_{k=0}^{\min(m,n)}{n+1\choose k+1}\cdot\#\vertex^{(k)}(\Diamond_m,\triangle_k).
$$
So the equality for $\#\vertex(\Diamond_m,\triangle_n)$ follows because:
$$
\#\vertex^{(k)}(\Diamond_m,\triangle_k)=
\begin{cases}
1\ \text{if}\ k=0,\\
2^m\ \text{if}\ k=1,\ \text{by Proposition \ref{calculus1}(c)},\\
0\ \text{if}\ k=2,\ \text{by Lemma \ref{diamondembed} and Theorem \ref{diamondtosimplex}(a)},\\
\sigma(m,3)\ \text{if}\ k=3\le m,\ \text{by Theorem \ref{diamondtosimplex}(b,c)}\ \text{and}\ \beta(3)=1.
\end{cases}
$$

\medskip\noindent(c) The lower bound follows from Theorem \ref{diamondtosimplex}(b).

By Lemma \ref{diamondembed}, there are $2^kk!\beta(k)$ possibilities for $\nu_2$ in the diagram in Theorem \ref{diamondtosimplex}(a), with $P=\triangle_k$.

Proposition \ref{calculus1}(a) and Corollary \ref{convexposition1}(b) give rise to a bijective correspondence between the elements $\nu\in\vertex^{(k)}(\Diamond_k,\Diamond_m)$ with $\nu(0)=0$ and the set of injective maps $\{\pm e_1,\ldots,\pm e_k\}\to\{\pm e_1,\ldots,\pm e_m\}$, mapping antipodes to antipodes.
So there are $2^k\frac{m!}{(m-k)!}$ possibilities for $\nu_1$ in the same diagram.

By (a), for fixed $\nu_1$ and $\nu_2$, the set of $f\in\vertex^{(k)}(\Diamond_m,\triangle_k)$, fitting in the diagram in Theorem \ref{diamondtosimplex}(a) with $P=\triangle_k$, is bijective to the set of maps of type
\begin{align*}
\psi:\{\pm e_i\ |\ 1\le i\le k,\ i\not=i_1,\ldots,i_k\}\to\vertex(&\triangle_k\cap(2\phi(0)-\triangle_k)),\\
&\psi(-e_i)=2\nu_2(0)-\psi(e_i).
\end{align*}
There are $\#\vertex(\triangle_k\cap(2\phi(0)-\triangle_k))^{m-k}$ possibilities for such $\psi$. By Lemma \ref{symmetricintersection}, the number of the $f$ for fixed $\nu_1$ and $\nu_2$ is bounded above by ${2k+2\choose k+2}^{m-k}$.

Next we reduce the multiplicities in our counting as follows: the set of the vertex maps $f:\Diamond_m\to\triangle_k$ for a pair $(\nu_1,\nu_2)$ equals the set of those for $(\nu_1\alpha,\alpha^{-1}\nu_2)$ for any $\alpha\in\BC_k$. The $\BC_k$-action on the pairs $(\nu_1,\nu_2)$, given by $\alpha*(\nu_1,\nu_2)=(\nu_1\alpha,\alpha^{-1}\nu_2)$, is free. So the product of the numbers of possibilities for $\nu_1$ and $\nu_2$ and the upper bound ${2k+2\choose k+2}^{m-k}$, divided by $|\BC_k|=2^kk!$, bounds above $\#\vertex^{(k)}(\Diamond_m,\triangle_k)$.
\end{proof}

\medskip\noindent\emph{Notice}. The inequality in Corollary \ref{numberdiamondsimplex}(c) is sharp in the following sense: when $k=m$ the upper and lower bounds are equal to $2^mm!\beta(m)$. This means that any improvement in the upper bound should come from an improvement in the upper bound in Lemma \ref{symmetricintersection} in the special case when the two simplices are mutually symmetric w.r.t. a point.

\begin{proof}[Proof of Theorem \ref{diamondtosimplex}(a)] We will use the following notation. For any subset $J\subset\{1,\ldots,m\}$ we have the sub-crosspolytope
$$
\Diamond(J)=\conv(\{\pm e_j\ |\ j\in J\}\subset\Diamond_m.
$$

In view of Proposition \ref{calculus1}(a) and Corollary \ref{convexposition1}(b), Theorem \ref{diamondtosimplex}(a) admits the following equivalent reformulation: there exists a subset
$$
\{i_1,\ldots,i_n\}\subset\{1,\ldots,m\}
$$
such that $\Diamond(i_1,\ldots,i_n)$ satisfies the condition:
$$
f|_{\Diamond(i_1,\ldots,i_n)}\in\vertex^{(n)}(\Diamond(i_1,\ldots,i_n),P).
$$

Assume, to the contrary, that such a subset does not exist.

Pick any subset $\{i_1,\ldots,i_n\}\subset\{1,\ldots,m\}$. By Lemma \ref{perturbation}, there is an affine 1-family $\{g_t\}_{(-1,1)}\subset\Hom(\Diamond(i_1,\ldots,i_n),P)$ with $g_0=f|_{\Diamond(i_1,\ldots,i_n)}$.

First we observe that $\{g_t\}$ is not constant on $0$ as $t$ runs over $(-1,1)$. In fact, if $g_t(0)=f(0)$ for all $t$ then the system $\{g_t\}$ gives rise to an affine perturbation of $f$ by extending $g_t$ to the maps $f_t:\Diamond_m\to P$, defined by $f_t(\pm e_i)=f(\pm e_i)$ for all $i\in\{1,\ldots,m\}\setminus\{i_1,\ldots,i_n\}$. But $f$, being a vertex map, can not be perturbed.

By Corollary \ref{convexposition1}(c), we have $f(0)\not=f(\pm e_i)$ for every index $i\in\{1,\ldots,m\}$. We can assume $g_t(0)\not=g_t(\pm e_i)$ for every $t\in(-1,1)$ and every index $i\in\{i_1,\ldots,i_n\}$. Since $\{g_t\}$ is not constant on $0$,  for every index $i\in\{i_1,\ldots,i_n\}$ at least one of the two subsets
$$
\{g_t(e_i)\}_{(-1,1)},\ \{g_t(-e_i)\}_{(-1,1)}\subset P
$$
is an open interval. Pick one such interval per index $i\in\{i_1,\ldots,i_n\}$. We obtain a system of open intervals
$$
I_{i_1},\ldots,I_{i_n}\subset P,
$$
such that either $g_t(e_{i_k})\in I_{i_k}$ for all $t\in(-1,1)$ or $g_t(-e_{i_k})\in I_{i_k}$  for all $t\in(-1,1)$, where $k\in\{1,\ldots,n\}$. For simplicity of notation and without loss of generality, we can assume
$$
g_t(e_{i_k})\in I_{i_k},\quad t\in(-1,1),\quad k=1,\ldots,n.
$$
These points are non-constant affine functions of the parameter $t$.

By Lemma \ref{convexposition1}(c), $g_0(e_{i_k})=f(e_{i_k})\in\vertex\big(P\cap(2f(0)-P)\big)$. Therefore, the unique faces
$$
F_{i_k}, G_{i_k}\subset P,\qquad k=1,\ldots,n,
$$
such that $g_0(e_{i_k})=\int(F_{i_k})\cap\int(2f(0)-G_{i_k})$, satisfy the additional condition
\begin{equation}\label{complementaryspaces}
\lin(F_{i_k})\cap\lin(G_{i_k})=0.
\end{equation}
We also have $I_{i_k}\subset\int(F_{i_k})$ because $I_{i_k}$ is an open interval inside $P$, containing an interior point of the face $F_{i_k}\subset P$.

Similarly, we write
\begin{align*}
&g_0(-e_{i_k})=f(-e_{i_k})\in\int(2f(0)-F_{i_k})\cap\int(G_{i_k}),\\
&\{g_t(-e_{i_k})\}_{(-1,1)}\subset\int(G_{i_k}).
\end{align*}
(Unlike $I_{i_k}$, the set $\{g_t(-e_{i_k})\}_{(-1,1)})$ may be just a single point.) Consequently, as $t$ varies over $(-1,1)$, the point
$g_t(0)=1/2\big(g_t(e_{i_k})+g_t(-e_{i_k})\big)$ traces out an open interval, parallel to $\lin(F_{i_k})+\lin(G_{i_k})$:
$$
\lin\big(\{g_t(0)\}_{(-1,1)}\big)\subset\lin(F_{i_k})+\lin(G_{i_k}),\qquad k=1,\ldots,n.
$$
In particular,
$$
\dim\big(\bigcap_{k=1}^n V_{i_k}\big)>0,\quad\text{where}\ V_{i_k}=\lin F_{i_k}+\lin G_{i_k}\subset\RR^n.
$$

It is crucial that the linear space $V_{i_k}$ depends only on the index $i_k$ and not on the 1-family $\{g_t\}$. So we can introduce the analogous subspaces $V_i\subset\RR^n$ for all $i=1,\ldots,m$. Let $F_i,G_i\subset P$ be the pairs of faces as above, corresponding to the indices $i=1,\ldots,m$. (So we have $\lin F_i\cap\lin(G_i)=0$.)

Since the subset $\{i_1,\ldots,i_n\}\subset\{1,\ldots,m\}$ in the discussion above was arbitrary, Lemma \ref{helly} implies $\bigcap_{i=1}^mV_i\not=0$. Now we can choose a sufficiently small open interval $I$ as follows
$$
f(0)\in I\subset \big(f(0)+\bigcap_{i=1}^mV_i\big)\cap P.
$$
Then (\ref{complementaryspaces}) implies that $F_i\cap(2\tau-G_i)$ and $(2\tau-F_i)\cap G_i$ are singletons for every index $i\in\{1,\ldots,n\}$ and the centrally symmetric subpolytopes
$$
Q_\tau=\conv\big(\{F_i\cap(2\tau-G_i),(2\tau-F_i)\cap G_i\}_{i=1}^n\big)\subset P\cap(2\tau-P)
$$
define a \emph{non-constant} affine deformation of $\Im f$ as $\tau$ varies over $I$; i.e., $Q_0=\Im f$, the vertices of $Q_\tau$ are affine functions of $\tau$, and $\vertex(Q_\tau)$ is bijective to $\vertex(\Im f)$ for every $\tau$. In other words, $f$ admits an affine perturbation -- the desired contradiction.
\end{proof}

\begin{proof}[Proof of Theorem \ref{diamondtosimplex}(b)]
In view Proposition \ref{calculus1}(b), for any rank $n$ vertex map $f:\Diamond_m\to\triangle_n$ with $\Im f\cong\Diamond_n$, the map $f_{\surj}$ can be viewed as a surjective \emph{vertex} map $f_{\surj}:\Diamond_m\to\Diamond_n$. The surjectivity implies $f_{\surj}(0)=0$. On the other hand, Corollary \ref{convexposition1}(b) implies that the set of surjective linear vertex maps $\Diamond_m\to\Diamond_n$ is bijective to $\Sigma(m,n)$. So there are $\sigma(m,n)$ possibilities for $f_{\surj}$.

By Proposition \ref{calculus1}(b), the set of possible $f_{\inj}$ can be identified with $\vertex^{(n)}(\Diamond_n,\triangle_n)$. By composing the linear surjective vertex maps $\Diamond_m\to\Diamond_n$ with the injective vertex maps $\Diamond_n\to\triangle_n$, we can form $\sigma(m,n)\cdot\#\big(\vertex^{(n)}(\Diamond_n,\triangle_n)/\BC_n\big)$ different maps $\Diamond_m\to\triangle_n$. The latter number equals $\sigma(m,n)\beta(n)$ by Lemma \ref{diamondembed}(a,c).

It only remains to show that a rank $n$ map $f:\Diamond_m\to\triangle_n$ is a vertex map if $f_{\surj}:\Diamond_m\to\Diamond_n$ and $f_{\inj}:\Diamond_n\to\triangle_n$ are vertex maps. But if $f_{\surj}$ is a vertex map then, by Corollary \ref{convexposition1}(b), $f(\vertex(\Diamond_m))=\vertex(\Diamond_n)$. On the other hand, if $f$ admits a smooth perturbation $\{f_t\}_{(-1,1)}$, then the latter is not constant on at least one pair of opposite vertices of $\vertex(\Diamond_m)$. But the two conditions together make $f_{\inj}$ perturbable.
\end{proof}

\begin{proof}[Proof of Theorem \ref{diamondtosimplex}(c)]
First we show that, for any natural numbers $m>n>3$, the set $\vertex^{(n)}(\Diamond_m,\triangle_n)$ contains an element whose image is not isomorphic to $\Diamond_n$.

Pick an element $f\in\vertex^{(n)}(\Diamond_n,\triangle_n)$ -- a non-empty set by Theorem \ref{diamondtosimplex}(b). By Corollary \ref{convexposition1}(c),
$$
f(\vertex(\Diamond_n))\subset\vertex(\triangle_n\cap(2f(0)-\triangle_n)).
$$
We have $\#\FF(\triangle_n\cap(2f(0)-\triangle_n))\le2\cdot\#\FF(\triangle_n)=2(n+1)$. On the other hand, the inequality $n>3$ implies $2^n>2(n+1)$. Since $\Im f\cong\Diamond_n$, we have $\#\FF(\Im f)=2^n$. So $\Im f\subsetneq \triangle_n\cap(2f(0)-\triangle_n)$ and we can find two opposite vertices $v,v'\in\vertex(\triangle_n\cap(2f(0)-\triangle_n))\setminus\Im f$. Consider the map
\begin{align*}
f':\Diamond_m\to\triangle_n,\qquad&f'|_{\Diamond_n}=f,\\
&f'(e_i)=v,\ f'(-e_i)=v',\quad i=n+1,\ldots,m.
\end{align*}

Since $\vertex(\Im f)\subsetneq\vertex(\Im\phi)$, the polytope $\Im f'$ is not isomorphic to $\Diamond_n$. We are done, because $f'\in\vertex^{(n)}(\Diamond_m,\triangle_n)$ by Corollary \ref{numberdiamondsimplex}(a). (The latter does not use Theorem \ref{diamondtosimplex}(c).)

\medskip The elements of $\vertex^{(n)}(\Diamond_n,\triangle_n)$ have images isomorphic to $\Diamond_n$. So we can assume $m>n$. By Theorem \ref{diamondtosimplex}(b), $\vertex^{(2)}(\Diamond_m,\triangle_2)=\emptyset$. So it only remains to show that $\Im f\cong\Diamond_3$ for any element $f\in\vertex^{(3)}(\Diamond_m,\triangle_3)$.

By Theorem \ref{diamondtosimplex}(a), there exists $g\in\vertex^{(3)}(\Diamond_3,\triangle_3)$ such that $\Im g\subseteq\Im f$. On the one hand, $\Im g=\triangle_3\cap(2g(0)-\triangle_3)$ (Lemma \ref{diamondembed}). On the other hand, $\Im f\subseteq\triangle_3\cap(2g(0)-\triangle_3)$ (Corollary \ref{convexposition1}(c)). Hence $\Im g=\Im f$.
\end{proof}

\section{$\vertex(\Diamond_m,\Diamond_n)$}\label{Diamondtodiamond}

For any $f\in\vertex(\Diamond_m,\Diamond_n)$, whose image is contained in the boundary of $\Diamond_n$,
we defer to Section \ref{Diamondtosimplex}. So the only new situation is when $\Im f\cap\int(\Diamond_n)\not=\emptyset$,
or equivalently, when $f(0)\in\int(\Diamond_n)$. The main result of this section is that in this case $f$ is necessarily a linear map.

\begin{theorem}\label{diamondtodiamond}
For $m,n\ge2$, we have the implication
$$
f\in\vertex(\Diamond_m,\Diamond_n)\ \ \&\ \ f(0)\in\int(\Diamond_n)\quad\Longrightarrow\quad f(0)=0.
$$
\end{theorem}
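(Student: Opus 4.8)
The plan is to argue by contradiction: suppose $f\in\vertex(\Diamond_m,\Diamond_n)$ with $z:=f(0)\in\int(\Diamond_n)$ but $z\neq 0$, and produce a smooth (in fact affine) perturbation of $f$, contradicting Lemma \ref{perturbation}. The starting observation is Corollary \ref{convexposition1}(c): the images of the vertices $\pm e_i$ of $\Diamond_m$ land in $\vertex(\Diamond_n\cap(2z-\Diamond_n))$, and $f(0)$ is the center of symmetry of $\Im f$. So the data of $f$ is really the data of the pairs $\{f(e_i),f(-e_i)\}=\{w_i,2z-w_i\}$ with each $w_i$ a vertex of the centrally symmetric polytope $\Pi_z:=\Diamond_n\cap(2z-\Diamond_n)$. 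The key point to exploit is that $\Pi_z$, unlike $\Diamond_n$ itself, has ``too many'' facets: $\Diamond_n$ has $2^n$ facets, so $\Pi_z$ generically has up to $2^{n+1}$ facets, and moving $z$ changes $\Pi_z$ and hence creates room to move.

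The main idea for building the perturbation: since $z\neq0$, perturb $z$ along a short segment $z_t$ through $z=z_0$, and correspondingly deform each vertex $w_i$ of $\Pi_z=\Pi_{z_0}$ to a vertex $w_i(t)$ of $\Pi_{z_t}$. Concretely, each $w_i$ is the transverse intersection $\int(F_i)\cap\int(2z-G_i)$ of (the interiors of) a face $F_i\subset\Diamond_n$ and a translate of a face $G_i\subset\Diamond_n$, with $\lin(F_i)\cap\lin(G_i)=0$ (the argument here is exactly the complementary-dimension / Helly bookkeeping used in the proof of Theorem \ref{diamondtosimplex}(a) and in Lemma \ref{symmetricintersection}). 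As $z$ moves to $z_t$ the translated face $2z_t-G_i$ moves rigidly, so $w_i(t)=\int(F_i)\cap\int(2z_t-G_i)$ is a well-defined smooth (affine) function of $t$ on a small interval, staying a vertex of $\Pi_{z_t}$. Setting $f_t(e_i)=w_i(t)$, $f_t(-e_i)=2z_t-w_i(t)$ gives an affine $1$-family $\{f_t\}\subset\Hom(\Diamond_m,\Diamond_n)$ with $f_0=f$. This family is non-constant provided $z_t$ genuinely moves and at least one $w_i$ depends on $t$ — so the whole thing reduces to choosing the direction of $z_t$ so that the family does not collapse.

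The real obstacle — and the heart of the proof — is ruling out the degenerate case where every such perturbation is constant, i.e. where no admissible motion of $z$ induces motion of $\Im f$. This is where $z\neq 0$ must be used decisively. The strategy: if $f$ admits no perturbation, then for every direction $u$ the induced deformation of $\Im f$ must be trivial, which forces strong parallelism/incidence constraints relating the faces $F_i,G_i$ of $\Diamond_n$ through the vertices $f(\pm e_i)$. One then shows these constraints can hold simultaneously only when the center of symmetry of $\Im f$ is the center of $\Diamond_n$, i.e. $z=0$. Here the special combinatorics of the crosspolytope enters: the facets of $\Diamond_n$ are $\{\sum\pm x_i=1\}$, so $\Diamond_n\cap(2z-\Diamond_n)$ is cut out by inequalities $|\sum\epsilon_i x_i|\le$ (something depending on $\epsilon$ and $z$), and for $z\neq0$ the two ``halves'' are genuinely asymmetric, giving a vertex of $\Pi_z$ whose defining faces $F_i,G_i$ can be slid against each other. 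I expect this last step to require a careful case analysis (or an averaging/symmetrization argument using that $\vertex(\Im f)$ is symmetric about $z$ while $\Diamond_n$ is symmetric about $0$), and it is the step where the bound $n\ge 2$ and the full force of being a \emph{vertex} map — not merely a map whose vertex-images are vertices — must be invoked.
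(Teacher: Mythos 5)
Your overall strategy (perturb $z=f(0)$, move the vertices of $\Diamond_n\cap(2z-\Diamond_n)$ along with it, and contradict Lemma \ref{perturbation}) is the same as the paper's, but the proposal has a genuine gap exactly where the paper does its real work. The claim that for an \emph{arbitrary} small motion $z_t$ the point $w_i(t)=\int(F_i)\cap\int(2z_t-G_i)$ ``is a well-defined smooth (affine) function of $t$, staying a vertex of $\Pi_{z_t}$'' is false in general. The condition $\lin(F_i)\cap\lin(G_i)=0$ does not force $\dim F_i+\dim G_i=n$; when the sum of dimensions is smaller (e.g.\ when $f(\pm e_i)$ is a vertex of $\Diamond_n$ lying on the boundary of the translated copy, or any non-transverse incidence), the affine hulls $\Aff(F_i)$ and $2z_t-\Aff(G_i)$ are disjoint for a generic direction of $z_t$, so $w_i(t)$ simply does not exist; and even when it exists as a point one must still check it lies in $\Diamond_n\cap(2z_t-\Diamond_n)$ and is a vertex there. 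The intersection point persists (and moves affinely) only if the direction of motion lies in $\lin(F_i)+\lin(G_i)$ \emph{simultaneously for all $i$}, and producing such a direction is the whole problem. This is precisely where the paper uses $z\neq0$: it takes the radial family $\lambda z$, $\lambda$ near $1$, and proves (Lemmas \ref{vertexunion}, \ref{cones}, \ref{slidingdiamond}) that the normal fan of $\Diamond_n\cap(\lambda z+\Diamond_n)$ is locally constant in $\lambda$ — the key steps being that any vertex of the intersection of the two corner cones comes from faces $F,G$ with $F\cap G\neq\emptyset$ (so $z\in\lin(F)+\lin(G)$ and a homothety/translation argument applies), plus a decomposition of $\Diamond_n$ into semi-open pyramids to handle vertices on the coordinate hyperplanes.

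Your diagnosis of the ``real obstacle'' is also misplaced: there is no degenerate case of an all-constant perturbation to rule out, because once the family $\{f_t\}$ exists with $f_t(0)=z_t$, it is automatically non-constant whenever $z_t$ moves (the midpoint of $f_t(e_i)$ and $f_t(-e_i)$ is $z_t$). So the step you defer with ``I expect this last step to require a careful case analysis'' is not an auxiliary degeneracy argument but the existence statement itself — the analogue of Lemma \ref{slidingdiamond} — and without it the proof is incomplete. (Relatedly, the vertex property of $f$ is used only once, at the very end via the perturbation criterion; the hard part is pure convex geometry of the crosspolytope, not a further exploitation of $f$ being a vertex.)
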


By Theorem \ref{diamondtosimplex}(b) and Corollary \ref{numberdiamondsimplex}(c), the next corollary yields nontrivial estimates for the number of vertex maps between two crosspolytopes of arbitrary dimension.

\begin{corollary}\label{numberdiamonddiamond}
For $m,n\ge2$ we have
\begin{align*}
\#\vertex(\Diamond_m,\Diamond_n)=2^mn^m&+2n+2^{m+1}n(n-1)+16{n\choose4}\sigma(m,3)+\\
&+\sum_{k=4}^{\min(m,n-1)}2^{k+1}{n\choose k+1}\cdot\#\vertex^{(k)}(\Diamond_m,\triangle_k).
\end{align*}
\end{corollary}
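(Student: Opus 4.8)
The plan is to partition $\vertex(\Diamond_m,\Diamond_n)$ according to the location of the image center $f(0)$, count the two parts separately, and then substitute the small-rank values of $\#\vertex^{(k)}(\Diamond_m,\triangle_k)$ already recorded in the proof of Corollary \ref{numberdiamondsimplex}. First I would establish the clean dichotomy $\vertex(\Diamond_m,\Diamond_n)=A\sqcup B$, where $A=\{f\mid f(0)\in\int(\Diamond_n)\}$ and $B=\{f\mid\Im f\subset\partial\Diamond_n\}$. Since $\Im f$ is centrally symmetric about $f(0)$ and contained in $\Diamond_n$, if some $p\in\Im f$ lies in $\int(\Diamond_n)$ then the midpoint $f(0)$ of $p$ and $2f(0)-p\in\Diamond_n$ also lies in $\int(\Diamond_n)$; so $f(0)\in\int(\Diamond_n)$ is equivalent to $\Im f\cap\int(\Diamond_n)\neq\emptyset$. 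If instead $f(0)\in\partial\Diamond_n$, pick the minimal face $G$ with $f(0)\in\int(G)$ and a linear functional $\ell$ with $\ell\le c$ on $\Diamond_n$ and $\ell\equiv c$ on $G$; because $0$ is the barycenter of $\Diamond_m$ one has $\ell(f(e_i))+\ell(f(-e_i))=2\ell(f(0))=2c$ with each summand $\le c$, forcing $\ell(f(v))=c$ for every vertex $v$ of $\Diamond_m$, hence $\Im f\subset G\subset\partial\Diamond_n$. Thus $A$ and $B$ are disjoint and exhaust $\vertex(\Diamond_m,\Diamond_n)$.

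Next I would count $A$. By Theorem \ref{diamondtodiamond} every $f\in A$ satisfies $f(0)=0$, hence is linear; conversely every linear vertex map lies in $A$ because $0\in\int(\Diamond_n)$. For such $f$, Corollary \ref{convexposition1}(b) gives $f(\vertex(\Diamond_m))\subset\vertex(\Diamond_n)$, and conversely every linear map with $f(e_i)\in\{\pm e_1,\dots,\pm e_n\}$ for all $i$ (so automatically $f(-e_i)=-f(e_i)$ and $f(\Diamond_m)\subset\Diamond_n$) is a vertex map by Proposition \ref{calculus1}(a). Since such a linear map is determined by the $m$ independent choices $f(e_i)\in\{\pm e_1,\dots,\pm e_n\}$, we get $\#A=(2n)^m=2^mn^m$, the first term of the formula.

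To count $B$, I would use that for every proper face $G\subset\Diamond_n$, writing $G=F_1\cap\dots\cap F_r$ with $F_j\in\FF(\Diamond_n)$, the subset $\Hom(\Diamond_m,G)=\{f\mid f(v)\in G\text{ for all }v\in\vertex(\Diamond_m)\}$ equals $\bigcap_{v}\bigcap_j H(v,F_j)$, hence is a face of $\Hom(\Diamond_m,\Diamond_n)$ by Theorem \ref{folklore}(b); therefore $\vertex(\Diamond_m,G)\subset\vertex(\Diamond_m,\Diamond_n)$. Conversely, each $f\in B$ belongs to $\vertex(\Diamond_m,G)$ for $G$ the minimal face containing $\Im f$ (Corollary \ref{convexposition1}(a)), and there $\rank f=\dim G$; distinct pairs $(G,f|_G)$ give distinct maps. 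Since every $k$-dimensional proper face of $\Diamond_n$ is affinely a copy of $\triangle_k$ and there are exactly $2^{k+1}\binom{n}{k+1}$ of them, this yields
$$
\#B=\sum_{k=0}^{n-1}2^{k+1}\binom{n}{k+1}\,\#\vertex^{(k)}(\Diamond_m,\triangle_k).
$$

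Finally I would substitute the known values $\#\vertex^{(0)}(\Diamond_m,\triangle_0)=1$, $\#\vertex^{(1)}(\Diamond_m,\triangle_1)=2^m$ (Proposition \ref{calculus1}(c)), $\#\vertex^{(2)}(\Diamond_m,\triangle_2)=0$ (Lemma \ref{diamondembed}(b) together with Theorem \ref{diamondtosimplex}(a)), $\#\vertex^{(3)}(\Diamond_m,\triangle_3)=\sigma(m,3)$ (Theorem \ref{diamondtosimplex}(b,c) and $\beta(3)=1$), and $\#\vertex^{(k)}(\Diamond_m,\triangle_k)=0$ whenever $k>m$. The terms $k=0,1,3$ produce $2n$, $2^{m+1}n(n-1)$, and $16\binom{n}{4}\sigma(m,3)$ respectively, the term $k=2$ vanishes, and the remaining terms run over $4\le k\le\min(m,n-1)$; adding $\#A=2^mn^m$ gives the asserted identity. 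The only steps that are not pure bookkeeping are the two structural observations above — that $f(0)\in\partial\Diamond_n$ forces $\Im f$ into a single proper face (where the barycentric position of $0$ in $\Diamond_m$ is essential) and that $\Hom(\Diamond_m,G)$ is a face of $\Hom(\Diamond_m,\Diamond_n)$ for every face $G$ — so I expect the dichotomy of the first paragraph to be the part requiring the most care; everything genuinely new is already contained in Theorem \ref{diamondtodiamond}.
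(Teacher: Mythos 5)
Your proposal is correct and follows essentially the same route as the paper: split $\vertex(\Diamond_m,\Diamond_n)$ by whether $f(0)$ lies in $\int(\Diamond_n)$, count the interior case as $2^mn^m$ via Theorem \ref{diamondtodiamond}, Corollary \ref{convexposition1}(b) and Proposition \ref{calculus1}(a), reduce the boundary case to $2^{k+1}\binom{n}{k+1}\cdot\#\vertex^{(k)}(\Diamond_m,\triangle_k)$ over the $k$-faces, and substitute the known values for $k\le3$. You merely spell out two steps the paper leaves implicit (the dichotomy $f(0)\in\int(\Diamond_n)$ versus $\Im f\subset\partial\Diamond_n$, and that $\Hom(\Diamond_m,G)$ is a face of $\Hom(\Diamond_m,\Diamond_n)$), while your assertion that $\rank f=\dim G$ for the minimal face $G\supset\Im f$ is exactly the point the paper justifies by Lemma \ref{insidesimplex}.
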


\begin{proof}
By Corollary \ref{convexposition1}(b) and Theorem \ref{diamondtodiamond}, the elements $f\in\vertex(\Diamond_m,\Diamond_n)$ with $f(0)\in\int(\Diamond_n)$ are in bijective correspondence with the maps $\vertex(\Diamond_m)\to\vertex(\Diamond_n)$, mapping antipodes to antipodes. The number of such maps is $2^mn^m$.

By Lemma \ref{insidesimplex}, the number of the vertex maps $f:\Diamond_m\to\Diamond_n$, such that $f(0)$ is in the interior of a $k$-dimensional face of $\Diamond_n$ for some $k<n$, equals $\#\vertex^{(k)}(\Diamond_m,\triangle_k)$ if $m\ge k$ and $0$ otherwise. Since the number of $k$-faces of $\Diamond_n$ is $2^{k+1}{n\choose{k+1}}$ (\cite[Section 7.2]{Coxeter}), we can write
\begin{align*}
\#\vertex(\Diamond_m&,\Diamond_n)=2^mn^m+\sum_{k=0}^{\min(m,n-1)}2^{k+1}{n\choose k+1}\cdot\#\vertex^{(k)}(\Diamond_m,\triangle_k).
\end{align*}

Now the desired expression is obtained from the sum above by explicating the $k$-th summands for $k=0,1,2,3$, just like we did in Corollary \ref{numberdiamondsimplex}(b).
\end{proof}

The rest of the section is a series of lemmas that comprise the proof of Theorem \ref{diamondtodiamond}.

For the coordinate hyperplanes in $\RR^n$ we use the notation
$$
H^{(i)}=\sum_{j\not=i}\RR e_i\subset\RR^n,\qquad\qquad i=1,\ldots,n.
$$

\begin{lemma}\label{vertexunion}
For a point $z\in\RR^n$, satisfying $\dim(\Diamond_n\cap(z+\Diamond_n))=n$, we have
\begin{align*}
\big(\Diamond_n\cap(z+\Diamond_n)\big)&\setminus\big(\int(\Diamond_n)\cup\int(z+\Diamond_n)\big)\subset\\
&\big(\Diamond_n\cap(z+\Diamond_n)\big)\setminus\bigcap_{i=1}^n\big(H^{(i)}\cup\big(z+H^{(i)}\big)\big)\\
\end{align*}
\end{lemma}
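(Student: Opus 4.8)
The plan is to recast the claimed set inclusion as a statement about the finitely many points common to all the pairs $H^{(i)},z+H^{(i)}$, and then settle it with a short $\ell^1$-norm computation. Write $A=\Diamond_n\cap(z+\Diamond_n)$, $B=\int(\Diamond_n)\cup\int(z+\Diamond_n)$, and $C=\bigcap_{i=1}^n\bigl(H^{(i)}\cup(z+H^{(i)})\bigr)$; since both sides of the asserted inclusion are subsets of $A$, passing to complements relative to $A$ shows that $A\setminus B\subseteq A\setminus C$ is \emph{equivalent} to $A\cap C\subseteq B$. As $x\in C$ means precisely that $x_i\in\{0,z_i\}$ for every $i$ (so $C$ consists of at most $2^n$ points), it suffices to prove: every $x\in\Diamond_n\cap(z+\Diamond_n)$ all of whose coordinates satisfy $x_i\in\{0,z_i\}$ lies in $\int(\Diamond_n)$ or in $\int(z+\Diamond_n)$.

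I would first record the standard descriptions $\Diamond_n=\{x:\|x\|_1\le1\}$ and $z+\Diamond_n=\{x:\|x-z\|_1\le1\}$, with the interiors obtained by replacing $\le$ with $<$, where $\|x\|_1=|x_1|+\cdots+|x_n|$. The only place the hypothesis $\dim\bigl(\Diamond_n\cap(z+\Diamond_n)\bigr)=n$ enters is to deduce $\|z\|_1<2$: if instead $\|z\|_1\ge2$, then every $x\in A$ satisfies $2\ge\|x\|_1+\|x-z\|_1\ge\|z\|_1\ge2$ by the triangle inequality, forcing $\|x\|_1=1$; then $A\subseteq\{x:\|x\|_1=1\}$, which is impossible, since a convex set of dimension $n$ in $\RR^n$ has nonempty interior whereas $\{x:\|x\|_1=1\}$ does not.

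Finally, given $x\in A\cap C$, I would evaluate $\|x\|_1+\|x-z\|_1$ coordinate by coordinate: if $x_i=0$ the $i$-th contribution is $|x_i|+|x_i-z_i|=|z_i|$, and if $x_i=z_i$ it is again $|z_i|$, so $\|x\|_1+\|x-z\|_1=\|z\|_1<2$; hence $\|x\|_1<1$ or $\|x-z\|_1<1$, that is, $x\in\int(\Diamond_n)$ or $x\in\int(z+\Diamond_n)$, which is exactly $A\cap C\subseteq B$. I do not anticipate a genuine obstacle; the only points needing care are getting the De Morgan reformulation the right way around (one must not end up proving the reverse inclusion) and noticing that full-dimensionality is essential — when $\dim A<n$ one has $\|z\|_1=2$ and the asserted inclusion fails.
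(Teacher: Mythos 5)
Your proof is correct, and it takes a genuinely different route from the paper. The paper argues by contradiction inside the crosspolytope combinatorics: it takes a point $v_J\in C:=\bigcap_{i}\big(H^{(i)}\cup(z+H^{(i)})\big)$ lying in $\Diamond_n\cap(z+\Diamond_n)$ but in neither interior, localizes it to the sub-crosspolytopes $\Diamond(J)$ and $z+\Diamond(K)$ (where $K$ is the complement of $J$), and deduces that translation by $z$ carries a facet hyperplane of $\Diamond_n$ onto the opposite parallel one, contradicting $\dim\big(\Diamond_n\cap(z+\Diamond_n)\big)=n$. You instead use the $\ell^1$-description $\Diamond_n=\{x:\|x\|_1\le1\}$: full-dimensionality of the intersection is converted once and for all into $\|z\|_1<2$, and then the coordinatewise identity $\|x\|_1+\|x-z\|_1=\|z\|_1$ for every $x$ with $x_i\in\{0,z_i\}$ forces $x$ into $\int(\Diamond_n)$ or $\int(z+\Diamond_n)$. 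Your version is shorter and more transparent about where the hypothesis enters (indeed $\dim=n$ is equivalent to $\|z\|_1<2$), and it proves the slightly stronger statement $C\subset\int(\Diamond_n)\cup\int(z+\Diamond_n)$ without needing $x$ to lie in the intersection at all; the paper's argument, on the other hand, stays in the facet/sub-crosspolytope language ($\Diamond(J)$, opposite facets) that is reused elsewhere in Section 5. One small caveat: your closing aside that the inclusion \emph{fails} whenever $\dim<n$ is not quite right --- e.g.\ for $z=2e_1$ the intersection is the single point $e_1$, which does not lie in $C$, so the inclusion holds trivially; failure occurs for some, but not all, $z$ with $\|z\|_1=2$. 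This does not affect the proof of the lemma itself.
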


\begin{proof}
For simplicity of notation, denote $\Diamond=\Diamond_n\cap(z+\Diamond_n)$.

Consider a subset $J\subset\{1,\ldots,n\}$, such that the point
$$
v_J=\big(\bigcap_J H^{(j)}\big)\ \bigcap\ \big(\bigcap_{\{1,\ldots,n\}\setminus J}\big(z+H^{(k)}\big)\big)\in\RR^n
$$
does not belong to $\int(\Diamond_n)\cup\int(z+\Diamond_n)$. We want to show $v_J\notin\Diamond$.

Let $K=\{1,\ldots,n\}\setminus J$. Observe, $J\not=\emptyset$ and $K\not=\emptyset$ for, otherwise, either $v_J=0\in\int(\Diamond_n)$ or $v_J=z\in\int(z+\Diamond_n)$.

Put $\Diamond(J)=\conv\big(\{\pm e_j\}_J\big)$ and similarly for $\Diamond(K)$ (as in the proof of Theorem \ref{diamondtosimplex}(a)). Assume to the contrary $v_J\in\Diamond$. Then
\begin{align*}
&v_J\in\Diamond_n\ \bigcap\ \big(\bigcap_JH^{(j)}\big)=\Diamond(J),\\
&v_J\in(z+\Diamond_n)\ \bigcap\ \big(\bigcap_K\big(z+H^{(k)}\big)\big)=z+\Diamond(K).
\end{align*}
Since $\int(\Diamond(J)),\int(\Diamond(K))\subset\int(\Diamond_n)$, we conclude
$$
v_J\in\partial\Diamond(J)\ \cap\ \partial\big(z+\Diamond(K))\big)=\partial\Diamond(J)\cap\big(z+\partial\Diamond(K)\big).
$$

Assume
$$
v_J=\conv\big(\{\delta_je_j\}_J\big)\cap\big(z+\conv\big(\{\delta_ke_k\}_K\big)\big)
$$
for some $\delta_1,\ldots,\delta_n\in\{-1,1\}$. Then we have
\begin{align*}
\Aff\big(\{\delta_je_j\}_J,\{-\delta_ke_k\}_K\big)=z+\Aff\big(\{-\delta_je_j\}_J,\{\delta_ke_k\}_K\big),
\end{align*}
i.e., the parallel translation by $z$ moves one facet hyperplane of $\Diamond_n$ to its opposite. But this contradicts the assumption $\dim\Diamond=n$.
\end{proof}

\begin{lemma}\label{cones}
\emph{(a)} Let $C$ and $D$ be cones in $\RR^n$ and $x_1,x_2,x_3$ be three distinct collinear points in $\RR^n$. Assume $x_3\notin[x_1,x_2]$. Then the polyhedra $(x_1+C)\cap(x_3+D)$ and $(x_2+C)\cap(x_3+D)$ are homothetic.

\medskip\noindent\emph{(b)} Let $n\ge 2$ and $z\in\RR^n$. Assume $\dim(\Diamond_n\cap(z+\Diamond_n))=n$. Then there exists a real number $\epsilon>0$ such that for all $\lambda\in(1-\epsilon,1+\epsilon)$ we have the equality of normal fans:
\begin{align*}
\cN\big(\big(e_n+&\RR_+(\Diamond_n-e_n)\big)\bigcap\big(\lambda z-e_n+\RR_+(\Diamond_n+e_n)\big)\big)=\\
&\cN\big(\big(e_n+\RR_+(\Diamond_n-e_n)\big)\bigcap\big(z-e_n+\RR_+(\Diamond_n+e_n)\big)\big).
\end{align*}
\end{lemma}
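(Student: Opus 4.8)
For part (a) I would just write down the homothety. Put $x_3=(1-t)x_1+tx_2$; since $x_1,x_2,x_3$ are distinct and $x_3\notin[x_1,x_2]$ we have $t\notin[0,1]$, so $\rho:=(t-1)/t$ is a positive real number (and $\rho\ne1$). Let $h(y)=x_3+\rho(y-x_3)$ be the homothety with center $x_3$ and ratio $\rho$. Because $\rho>0$ and $C,D$ are cones, $h(x_3+D)=x_3+\rho D=x_3+D$, while a one-line computation gives $h(x_1)=x_2$, hence $h(x_1+C)=x_2+\rho C=x_2+C$. Therefore $h$ maps $(x_1+C)\cap(x_3+D)$ onto $(x_2+C)\cap(x_3+D)$ (the assertion being vacuous if the intersection is empty), which is all (a) requires.

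For part (b), set $C=\RR_+(\Diamond_n-e_n)$ and $D=\RR_+(\Diamond_n+e_n)$ — the corner cones of $\Diamond_n$ at $e_n$ and at $-e_n$ translated to the origin — and $\Pi(\lambda)=(e_n+C)\cap(\lambda z-e_n+D)$, the polyhedron in question. Note $-C=\RR_+(e_n-\Diamond_n)=\RR_+(e_n+\Diamond_n)=D$ (using $\Diamond_n=-\Diamond_n$), that every nonzero vector of $C$ (resp. $D$) has negative (resp. positive) last coordinate, so $C\cap D=\{0\}$ and each $\Pi(\lambda)$ is a polytope. I would first check $\dim\Pi(\lambda)=n$ near $\lambda=1$: since a corner cone contains its polytope we have $\Diamond_n\cap(z+\Diamond_n)\subseteq\Pi(1)$, so the hypothesis gives $\dim\Pi(1)=n$; and an interior point of $\Pi(1)$ is interior to each of the two cones defining it, hence stays in $\lambda z-e_n+D$ for $\lambda$ close to $1$.

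The structural observation is that all facet normals of $C$ and of $D$ — hence of every $\Pi(\lambda)$ — are facet normals of $\Diamond_n$, a fixed finite set unaffected by translating $D$ by $\lambda z$. Thus any two full-dimensional members $\Pi(\lambda),\Pi(\lambda')$ of the same combinatorial type automatically have parallel corresponding faces, so by the normal-fan criterion recalled in Section \ref{Preliminaries} they have equal normal fans; it suffices to prove that the combinatorial type of $\Pi(\lambda)$ is locally constant at $\lambda=1$. Here I would use that for full-dimensional $P,P'$ with $\dim(P\cap P')=n$ the normal cone of a nonempty face $F\cap F'$ is the sum of the normal cone of $F$ in $P$ and that of $F'$ in $P'$: consequently $\cN(\Pi(\lambda))$ is determined by the set of pairs $(G_1,G_2)$, $G_1$ a face of $C$ and $G_2$ a face of $D$, with $(e_n+G_1)\cap(\lambda z-e_n+G_2)\ne\emptyset$, i.e. $2e_n-\lambda z\in G_2-G_1$, the corresponding normal cone being independent of $\lambda$. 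Hence $\cN(\Pi(\lambda))$ can change only when the point $2e_n-\lambda z$, which moves along a line as $\lambda$ varies, crosses the boundary of one of the finitely many closed cones $G_2-G_1$.

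The heart of the matter — and the step I expect to be the main obstacle — is to show the hypothesis $\dim(\Diamond_n\cap(z+\Diamond_n))=n$ keeps $2e_n-z$ off the boundary of every $G_2-G_1$. As in the proof of Lemma \ref{vertexunion}, this hypothesis says precisely that $z$ carries no facet hyperplane of $\Diamond_n$ onto its opposite, i.e. $\langle\delta,z\rangle\ne\pm2$ for all $\delta\in\{\pm1\}^n$. Each $G_2-G_1$ is a cone of the form $\RR_+(e_n+P_0)$ for a subpolytope $P_0\subseteq\Diamond_n$ with $-e_n\in P_0$ (and, as one checks, $e_n\notin P_0$), whose facet-supporting hyperplanes are spanned by $\{\pm1\}$-sign vectors; incidence of $2e_n-z$ with such a hyperplane with nonzero last coordinate reduces to an equation $\langle\delta,z\rangle=\pm2$, excluded by the hypothesis. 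The delicate point is to also dispose of the facets with last coordinate zero (again via Lemma \ref{vertexunion}, or by showing directly from the corner-cone description that such configurations cannot touch $2e_n-z$), carrying the argument through uniformly over all face pairs rather than by case analysis. Once this is done, the line $\lambda\mapsto 2e_n-\lambda z$ meets no such boundary for $\lambda$ in a small interval about $1$, the family of active pairs $(G_1,G_2)$ is constant there, and $\cN(\Pi(\lambda))=\cN(\Pi(1))$ on that interval.
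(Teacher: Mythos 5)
Part (a) is fine and is essentially the paper's own one-line homothety (the paper uses the ratio $\kappa=\frac{x_2-x_3}{x_1-x_3}$ and center $x_3$), so nothing to add there.

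For part (b) your framing is reasonable: the normal cones $N_{G_1}+N_{G_2}$ attached to incident face pairs of the two corner cones do not depend on $\lambda$, so the lemma reduces to showing that the set of incident pairs is locally constant at $\lambda=1$. But that reduction is not the hard part, and the hard part is exactly what you leave open. Concretely, you must show that for every pair of faces whose translates meet in relative interiors at $\lambda=1$, the direction $z$ lies in $\lin(G_1)+\lin(G_2)$; otherwise the pair can drop out (or a new pair can appear) for $\lambda\ne 1$ arbitrarily close to $1$, and the fan changes. Your sketch handles only some configurations (reducing them to equations $\langle\delta,z\rangle=\pm2$ for sign vectors $\delta$ -- and note the hypothesis $\dim(\Diamond_n\cap(z+\Diamond_n))=n$ is the strict system $|\langle\delta,z\rangle|<2$, not merely $\langle\delta,z\rangle\ne\pm2$), and you explicitly defer ``the delicate point'' of the remaining face pairs. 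That deferred point is the entire content of the lemma, so as it stands the proposal does not prove (b).

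For comparison, the paper closes precisely this gap by a short geometric argument: for each vertex $v$ of the intersection other than the two apexes, write $v=\int(F)\cap(z+\int(G))$ for faces $F$, $G$ of the two corner cones; if $F\cap G=\emptyset$, then $F$ and $G$ extend to a pair of facets of $\Diamond_n$ that are centrally symmetric to each other, so translation by $z$ would carry a facet hyperplane of $\Diamond_n$ onto its opposite, contradicting $\dim(\Diamond_n\cap(z+\Diamond_n))=n$. Hence $F\cap G\ne\emptyset$, and taking $w\in F\cap G$ gives $z=(v-w)-\big((v-z)-w\big)\in\lin(F)+\lin(G)$; writing $z=z_1+z_2$ accordingly, the vertex moves affinely to $(\lambda-1)z_1+v$ with unchanged corner cone, and a completeness argument on the corner cones (including the two apexes) yields equality of the normal fans. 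To complete your route you would need an argument of exactly this kind, carried out uniformly over all incident face pairs, not just those whose facet data avoid the ``last coordinate zero'' cases you flag.
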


\bigskip Figure 3 represents the cones $e_n+\RR_+(\Diamond_n-e_n)$ and $\lambda z-e_n+\RR_+(\Diamond_n+e_n)$ and their intersection. Small perturbations $\lambda z$ of $z$ preserve the combinatorial type of the intersection, and thus keep the corresponding normal fans a constant.

\begin{figure}[htb]
\includegraphics[scale=1, trim=40mm 155mm 40mm 60mm, clip]{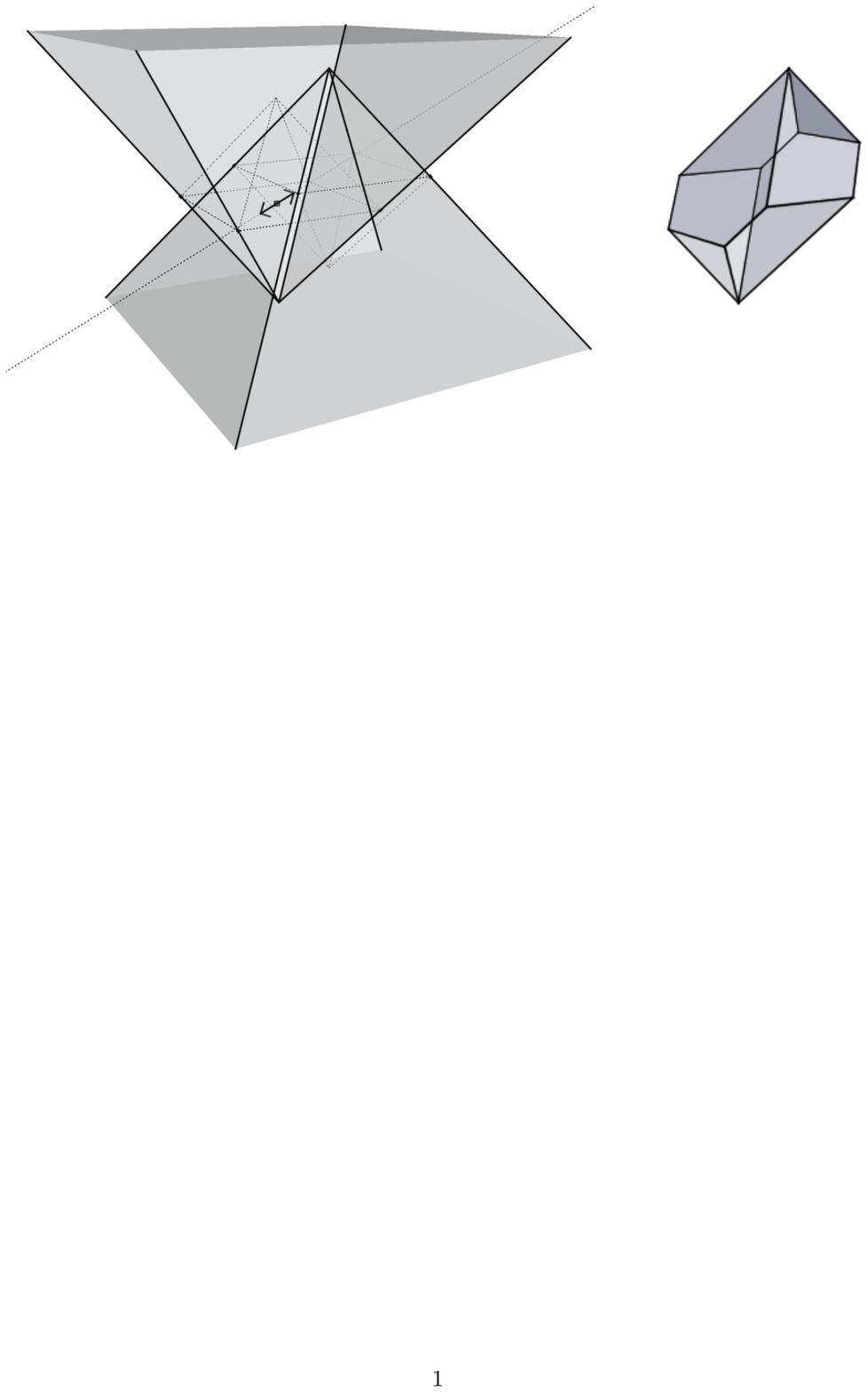}
\caption{}
\end{figure}

\smallskip\noindent\emph{Notice.} In the proof of Theorem \ref{diamondtodiamond} we only need the special case of Lemma \ref{cones}(a) when $C=D$ is a corner cone of $\Diamond_n$. But unlike the part (b), this part can be extended to arbitrary cones $C$ and $D$.

\begin{proof} (a) For $\kappa=\frac{x_2-x_3}{x_1-x_3}$ we have
$$
(x_2+C)\cap(x_3+D)=(1-\kappa)x_3+\kappa\big((x_1+C)\cap(x_3+D)\big).
$$

\medskip\noindent(b) Denote
\begin{align*}
\wedge^{(n)}=e_n+\RR_+(\Diamond_n-e_n),\qquad\vee^{(n)}=z-e_n+\RR_+(\Diamond_n+e_n).
\end{align*}
Pick a vertex
$$
v\in\vertex\big(\wedge^{(n)}\cap\big(z+\vee^{(n)}\big)\big)\notin\{e_n,z-e_n\}.
$$
There are positive dimensional faces $F\subset\wedge^{(n)}$ and $G\subset\vee^{(n)}$, uniquely determined by the condition
\begin{equation}\label{wedgevee}
v=\int(F)\cap(z+\int(G)).
\end{equation}

We claim $F\cap G\not=\emptyset$. In fact, if $F\cap G=\emptyset$ then there are facets $\tilde F\subset\wedge^{(n)}$ and $\tilde G\subset\vee^{(n)}$ which are centrally symmetric w.r.t. $0$ and such that $F\subset\tilde F$, $G\subset\tilde G$, and $\tilde F\cap\tilde G=\emptyset$ -- a general property of the faces of $\Diamond_n$. But this contradicts the condition $\dim(\Diamond_n\cap(z+\Diamond_n))=n$.

 The condition $F\cap G\not=\emptyset$, together with(\ref{wedgevee}), implies $\dim(F\cap G)=0$. (In particular, $F\cap G$ is a vertex of $\Diamond_{n-1}$.) Consequently,
$z\in\lin(F)+\lin(G)$. Assume $z=z_1+z_2$ for some $z_1\in\lin(F)$ and $z_2\in\lin(G)$. Using (\ref{wedgevee}) again, for all $\lambda$, sufficiently close to $1$, we can write
\begin{align*}
\int(F)\cap(\lambda&z+\int(G))=\int(F)\cap(\lambda z_1+\int(G))=\\
&(\lambda-1)z_1+\big(\int(F)\cap(z_1+\int(G))\big)=\\
&\qquad\qquad(\lambda-1)z_1+\big(\int(F)\cap(z+\int(G))\big)=\\
&\qquad\qquad\qquad\qquad(\lambda-1)z_1+v.
\end{align*}

In particular, the facets of $\wedge^{(n)}\cap(z+\vee^{(n)})$, meeting at the vertex $v$, and those of $\wedge^{(n)}\cap(\lambda z+\vee^{(n)})$, meeting at the vertex $(\lambda-1)z_1+v$, differ by the parallel translation by $(\lambda-1)z_1$. So the two corner cones are same.

But the corner cones at the vertices $e_n,\ \lambda z-e_n\in \vertex\big(\wedge^{(n)}\cap\big(\lambda z+\vee^{(n)}\big)\big)$
are also independent of $\lambda$. So, for a sufficiently small $\epsilon>0$, there is an affine function
\begin{align*}
\Theta:(1-\epsilon,1+\epsilon)\to(\RR^n)^N,\quad N=\#\vertex\big(\wedge^{(n)}\cap\big(z+\vee^{(n)}\big)\big),
\end{align*}
satisfying the conditions:

\smallskip\noindent$\centerdot$ $[\Theta(1)]=\vertex\big(\wedge^{(n)}\cap\big(z+\vee^{(n)}\big)\big)$,\ \ $[\Theta(\lambda)]\subset\vertex\big(\wedge^{(n)}\cap\big(\lambda z+\vee^{(n)}\big)\big)$,\ \ $\#[\Theta(1)]=\#[\Theta(\lambda)]$.

\smallskip\noindent$\centerdot$ for every element $[\Theta(1)]$ there is an element of $[\Theta(\lambda)]$ such that the corresponding corner cones are equal,

\smallskip\noindent where $[\Theta(-)]$ refers to the corresponding $N$-element subset of $\RR^n$.

Since $[\Theta(1)]$ is the complete vertex set of the polytope $\wedge^{(n)}\cap\big(z+\vee^{(n)}\big)$, the corner cones of $\wedge^{(n)}\cap\big(\lambda z+\vee^{(n)}\big)$ at the elements of $[\Theta(\lambda)]$ also form the complete set of corner cones of a polytope for every $\lambda\in(1-\epsilon,1+\epsilon)$, i.e.,
$$
[\Theta(\lambda)]=\vertex\big(\wedge^{(n)}\cap\big(\lambda z+\vee^{(n)}\big)\big).
$$
\end{proof}

\begin{lemma}\label{slidingdiamond}
For an element $z\in\RR^n$ with $\dim(\Diamond_n\cap(z+\Diamond_n))=n$, there exists a real number $\epsilon>0$ such that
$\cN\big(\Diamond_n\cap(\lambda z+\Diamond_n)\big)=\cN\big(\Diamond_n\cap(z+\Diamond_n)\big)$ for all $\lambda\in(1-\epsilon,1+\epsilon)$.
\end{lemma}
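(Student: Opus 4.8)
The plan is to prove that the maximal cones of the normal fan $\cN(D_\lambda)$, where $D_\lambda:=\Diamond_n\cap(\lambda z+\Diamond_n)$, do not change as $\lambda$ runs over a small interval about $1$. Since a maximal cone of the normal fan of a full-dimensional polytope is exactly the dual of a tangent cone $\RR_+(D_\lambda-v)$ at a vertex $v$, it suffices to prove that the unordered set of tangent cones of $D_\lambda$ at its vertices is locally constant in $\lambda$. Note first that $\dim D_\lambda=n$ for all $\lambda$ near $1$, since $\int\Diamond_n\cap\int(\lambda z+\Diamond_n)\neq\emptyset$ is an open condition on $\lambda$ holding at $\lambda=1$. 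One may also assume $z\neq0$, the case $z=0$ being trivial.

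I would then classify the vertices of $D_1$. If $v\in\int(z+\Diamond_n)$ then near $v$ one has $D_1=\Diamond_n$, so $v\in\vertex(\Diamond_n)$ and $\RR_+(D_1-v)=\RR_+(\Diamond_n-v)$; as $v\in\int(\lambda z+\Diamond_n)$ for $\lambda$ near $1$, $v$ stays a vertex of $D_\lambda$ with the same tangent cone. Symmetrically, if $v\in\int\Diamond_n$ then $v=z+u$ with $u\in\vertex(\Diamond_n)$, $\RR_+(D_1-v)=\RR_+(\Diamond_n-u)$, and for $\lambda$ near $1$ the point $\lambda z+u$ is a vertex of $D_\lambda$ lying in $\int\Diamond_n$, with the same tangent cone. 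The remaining vertices $v$ lie on $\partial\Diamond_n\cap\partial(z+\Diamond_n)$, and this is the substantive case.

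For such a $v$, let $W_A=\RR_+(\Diamond_n-v)$ and $W_B=\RR_+((z+\Diamond_n)-v)$ be the tangent wedges of $\Diamond_n$ and of $z+\Diamond_n$ along the faces carrying $v$; in a neighbourhood of $v$ one has $D_\lambda=\big(v+W_A\big)\cap\big(v+(\lambda-1)z+W_B\big)$, so the local question is whether $W_A\cap\big((\lambda-1)z+W_B\big)$ keeps, near the origin, a vertex with an unchanging tangent cone. The homothety argument of Lemma~\ref{cones}(a) applies verbatim to the wedges $W_A$ and $W_B$ and shows that this local picture is homothetic for all $\lambda$ on one side of $1$, hence combinatorially constant on $(1-\delta,1)$ and on $(1,1+\delta)$ separately; the real issue is to rule out a jump across $\lambda=1$. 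Here I would invoke Lemma~\ref{vertexunion}: for our $v$ it produces a coordinate $i$ with $v_i\neq0$ and $v_i\neq z_i$, and iterating this allows one to sort the coordinate directions, peel off the lineality directions of the two wedges, and reduce the local intersection to the configuration of Lemma~\ref{cones}(b) in some dimension $n'\le n$ with a translation vector $z'$ satisfying $\dim(\Diamond_{n'}\cap(z'+\Diamond_{n'}))=n'$. Lemma~\ref{cones}(b), being $\BC_{n'}$-equivariant, then supplies $\epsilon_v>0$ and a vertex $v(\lambda)$ of $D_\lambda$ — affine in $\lambda\in(1-\epsilon_v,1+\epsilon_v)$, with $v(1)=v$ — whose tangent cone $\RR_+(D_\lambda-v(\lambda))$ is independent of $\lambda$.

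Finally, with $\epsilon:=\min_v\epsilon_v$ over the finitely many vertices of $D_1$, the three classes attach to every vertex of $D_1$ a vertex of $D_\lambda$ carrying the same tangent cone; thus every maximal cone of $\cN(D_1)$ already appears in $\cN(D_\lambda)$, and since $\cN(D_1)$ is a complete fan its maximal cones cover $\RR^n$, forcing $\cN(D_\lambda)=\cN(D_1)$. The main obstacle is the reduction in the third class: turning the qualitative transversality furnished by Lemma~\ref{vertexunion} — which is precisely what forbids the degenerate situation in which a facet hyperplane of $\Diamond_n$ slides past an entire positive-dimensional face of $z+\Diamond_n$ as $\lambda$ passes $1$ — into a clean identification of the local picture at $v$ with the model of Lemma~\ref{cones}(b), while keeping careful track of how the wedges' lineality subspaces split off.
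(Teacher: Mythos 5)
Your overall architecture is the one the paper uses: classify the vertices of $D_\lambda:=\Diamond_n\cap(\lambda z+\Diamond_n)$, observe that the vertices lying in $\int(\Diamond_n)$ or $\int(z+\Diamond_n)$ persist (possibly translating affinely) with unchanged corner cones, handle the vertices on $\partial\Diamond_n\cap\partial(z+\Diamond_n)$ via Lemma \ref{vertexunion} and Lemma \ref{cones}(b), and conclude from completeness of $\cN(D_1)$ together with the disjointness of interiors of maximal cones of $\cN(D_\lambda)$ -- this last step is exactly what the paper does with its affine family $\Psi$. You are also right that Lemma \ref{cones}(a) only gives homothety separately on $\lambda<1$ and $\lambda>1$, so the crossing of $\lambda=1$ is the real issue. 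But the step you yourself flag as ``the main obstacle'' is precisely where the proof lives, and it is left unresolved; moreover your proposed route through it -- iterating Lemma \ref{vertexunion}, sorting coordinate directions, peeling off the lineality subspaces of the wedges $W_A,W_B$, and passing to a crosspolytope $\Diamond_{n'}$ of smaller dimension with a new vector $z'$ -- is off-track. As it stands, this is a genuine gap: nothing in the proposal actually identifies the local picture at such a vertex with a configuration to which Lemma \ref{cones}(b) applies.

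The correct identification, which is how the paper implements the step, is direct and involves no wedges, no lineality splitting, and no dimension reduction. Lemma \ref{vertexunion} hands you, for each vertex $v$ of $D_1$ outside both interiors, an index $i$ with $v_i\ne 0$ and $v_i\ne z_i$. For a point $x$ with $x_i>0$ one has $x\in\Diamond_n$ if and only if $x\in e_i+\RR_+(\Diamond_n-e_i)$, since only the facets through $e_i$ are active on that open half-space (similarly with $-e_i$ when $x_i<0$), and the same holds for $\lambda z+\Diamond_n$ according to the sign of $x_i-\lambda z_i$. Since $v_i\ne0$ and $v_i-z_i\ne0$, these sign conditions persist on a neighborhood of $v$ and for all $\lambda$ close to $1$; hence near $v$ the set $D_\lambda$ coincides with the intersection of one full (pointed) corner cone of $\Diamond_n$ at $\pm e_i$ with one corner cone of $\lambda z+\Diamond_n$ at $\lambda z\pm e_i$ -- exactly the model of Lemma \ref{cones}(b) (or one of its three sign variants, proved identically) in the same dimension $n$ and with the same $z$. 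That lemma then supplies the affinely moving vertex with constant corner cone, and since the identification is valid on a fixed neighborhood of $v$, this point is a genuine vertex of $D_\lambda$, not merely of the cone intersection. The paper's semi-open pyramids $\triangle^{(i)},\bigtriangledown^{(i)}$ are precisely the bookkeeping device for this local identification: their pairwise intersections with the $\lambda z$-translates cover $D_\lambda$ minus the two interiors, and removing the pyramid bases guarantees that the tracked points are vertices of $D_\lambda$ itself. With that step supplied, the rest of your argument goes through as written.
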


\begin{proof}
Consider the system of semi-open pyramids over $(n-1)$-dimensional crosspolytopes:
\begin{align*}
&\triangle^{(i)}=\conv\big(e_i,\{\pm e_j\}_{j\not=i}\big)\setminus\conv\big(\{\pm e_j\}_{j\not=i}\big),\\
&\bigtriangledown^{(i)}=\conv\big(-e_i,\{\pm e_j\}_{j\not=i}\big)\setminus\conv\big(\{\pm e_j\}_{j\not=i}\big),\\
&\qquad\qquad\qquad\qquad\qquad\qquad\qquad\qquad\quad\quad i=1,\ldots,n
\end{align*}

Since the bottoms of pyramids have been removed, for every index $i=1,\ldots,n$, we have the inclusion:
\begin{equation}\label{vertexunion1}
\begin{aligned}
\vertex\big(&\triangle^{(i)}\cap(z+\triangle^{(i)})\big)\ \cup\ \vertex\big(\triangle^{(i)}\cap(z+\bigtriangledown^{(i)})\big)\ \cup\\
&\vertex\big(\bigtriangledown^{(i)}\cap(z+\triangle^{(i)})\big)\ \cup\ \vertex\big(\bigtriangledown^{(i)}\cap(z+\bigtriangledown^{(i)})\big)\ \subset\ \vertex\big(\Diamond_n\cap(z+\Diamond_n)\big),\\
\end{aligned}
\end{equation}
where $\vertex(-)$ refers to the vertices of the corresponding topological closures, not in the bases of pyramids which have been removed, and the union is disjoint.

We can choose a real number $\delta>0$ such that
$$
\dim\big(\Diamond_n\cap(\lambda z+\Diamond_n)\big)=n,\qquad\lambda\in(1-\delta,1+\delta).
$$
For each such $\lambda$ and $i=1,\ldots,n$, consider the disjoint union
\begin{align*}
\vertex(\triangle^{(i)},&\bigtriangledown^{(i)},\lambda):=\\
&\vertex\big(\triangle^{(i)}\ \cap\ (\lambda z+\triangle^{(i)})\big)\cup\vertex\big(\triangle^{(i)}\ \cap\ (\lambda z+\bigtriangledown^{(i)})\big)\cup\\
&\quad\qquad\quad\vertex\big(\bigtriangledown^{(i)}\ \cap\ (\lambda z+\triangle^{(i)})\big)\ \cup\ \vertex\big(\bigtriangledown^{(i)}\cap(\lambda z+\bigtriangledown^{(i)})\big).
\end{align*}
Thus, the left hand side of (\ref{vertexunion1}) equals $\vertex(\triangle^{(i)},\bigtriangledown^{(i)},1)$.

By Lemma \ref{cones}, there are $0<\delta'\le\delta$ and affine functions
\begin{align*}
\Phi_i:(1-\delta',1+\delta')\to(\RR^n)^{N_i},\quad N_i=\#\vertex(\triangle^{(i)},\bigtriangledown^{(i)},1)&,\quad i=1,\ldots,n,
\end{align*}
such that for every $i$:

\medskip\noindent$\centerdot$ $[\Phi_i(1)]=\vertex(\triangle^{(i)},\bigtriangledown^{(i)},1)$,\ \ $[\Phi_i(\lambda)]\subset\vertex(\triangle^{(i)},\bigtriangledown^{(i)},\lambda)$,\ \ $\#[\Phi_i(1)]=\#[\Phi_i(\lambda)]$,

\smallskip\noindent$\centerdot$ for every element $[\Phi_i(1)]$ there is an element of $[\Phi_i(\lambda)]$ such that the corresponding corner cones are equal,

\medskip\noindent where: $[-]$ has the same meaning as in the proof of Lemma \ref{cones}(b) and `the corner cone at an element of $[\Phi_i(\lambda)]$' means the corresponding corner cone of the uniquely determined set from the following four possibilities:
$$
\big(\triangle^{(i)}\cap(\lambda z+\triangle^{(i)})\big),\quad \big(\triangle^{(i)}\cap(\lambda z+\bigtriangledown^{(i)})\big),\quad \big(\bigtriangledown^{(i)}\cap(\lambda z+\triangle^{(i)})\big),\quad \big(\bigtriangledown^{(i)}\cap(\lambda z+\bigtriangledown^{(i)})\big).
$$

\medskip\noindent\emph{Notice.} We have not excluded the possibility of the strict containment $[\Phi_i(\lambda)]\subsetneq\vertex(\triangle^{(i)},\bigtriangledown^{(i)},\lambda)$ for some $i$ and $\lambda$.

\medskip For every $\lambda\in(1-\delta',1+\delta')$ we have
$$
\begin{aligned}
\bigcup_{i=1}^n\bigg(\big(\triangle^{(i)}&\cap\big(\lambda z+\triangle^{(i)}\big)\big)\ \cup\ \big(\triangle^{(i)}\cap\big(\lambda z+\bigtriangledown^{(i)}\big)\big)\ \cup\\ &\big(\bigtriangledown^{(i)}\cap\big(\lambda z+\triangle^{(i)}\big)\big)\ \cup\ \big(\bigtriangledown^{(i)}\cap\big(\lambda z+\bigtriangledown^{(i)}\big)\big)\bigg)\ =\\
&\qquad\qquad\big(\Diamond_n\cap\big(\lambda z+\Diamond_n\big)\big)\setminus\bigcap_{i=1}^n\big(H^{(i)}\cup\big(\lambda z+H^{(i)}\big)\big),\\
\end{aligned}
$$
where, as in Lemma \ref{vertexunion}, $H^{(i)}\subset\RR^n$ denotes the $i$-th coordinate hyperplane.

\medskip The corner cone of $\Diamond_n\cap(z+\Diamond_n)$ at any vertex from
$$
\vertex\big(\Diamond_n\cap(z+\Diamond_n)\big)\bigcap\big(\int(\Diamond_n)\cup\int(z+\Diamond_n)\big)
$$
is either a corner cone of $\Diamond_n$ or $z+\Diamond_n$ and, therefore, the corresponding corner cone of $\Diamond_n\cap(\lambda z+\Diamond_n)$ is independent of $\lambda$.

Now Lemma \ref{vertexunion} implies the existence of a real number $\epsilon>0$ and an affine map
$$
\Psi:(1-\epsilon,1+\epsilon)\to(\RR^n)^M,\qquad M=\vertex\big(\Diamond_n\cap(z+\Diamond_n)\big),
$$
satisfying the conditions similar to those for the $\Phi_i$. Since $[\Psi(1)]$ is the complete vertex set of $\Diamond_n\cap(z+\Diamond_n)$, the corner cones of $\Diamond_n\cap(\lambda z+\Diamond_n)$ at the vertices from $[\Psi(\lambda)]\subset\vertex\big(\Diamond_n\cap(\lambda z+\Diamond_n)\big)$ also form the complete set of corner cones of a polytope  for every $\lambda\in(1-\epsilon,1+\epsilon)$. That is,
\begin{align*}
[\Psi(\lambda)]=\vertex\big(\Diamond_n\cap(\lambda z+\Diamond_n)\big),\qquad\lambda\in(1-\epsilon,1+\epsilon).
\end{align*}
\end{proof}

\begin{proof}[Ending of the proof of Theorem \ref{diamondtodiamond}]
Assume $f\in\vertex(\Diamond_m,\Diamond_n)$ and $z=f(0)\not=0$. By Corollary \ref{convexposition1}(b), $f(\vertex(\Diamond_m))\subset\vertex\big(\Diamond_n\cap(z+\Diamond_n)\big)$.
By Lemma \ref{slidingdiamond}, there is a real number $\epsilon>0$ and an affine map
$$
(1-\epsilon,1+\epsilon)\to\Hom(\Diamond_m,\Diamond_n),\qquad\lambda\mapsto f_\lambda,
$$
satisfying the conditions:

\smallskip\noindent$\centerdot$ $f_1=f$ and $f_\lambda(0)=\lambda z$,

\smallskip\noindent$\centerdot$ $f_\lambda(\vertex(\Diamond_m))\subset\vertex\big(\Diamond_n\cap(\lambda z+\Diamond)\big)$.

\smallskip But then $\{f_\lambda\}_{(1-\epsilon,1+\epsilon)}$ is an affine perturbation of $f$ (after rescaling $\lambda$ along the affine isomorphism $(1-\epsilon,1+\epsilon)\to(-1,1)$), a contradiction by Lemma \ref{perturbation}.
\end{proof}

\section{Partial result on $\vertex(\Box_m,\Diamond_n)$}\label{Boxtodiamond}

\begin{proposition}\label{boxtodiamond}
For all $m,n\ge2$ we have
\begin{align*}
\#\vertex(\Box_m,\Diamond_n)\ge 2n+2mn(2n-1)+2mn(m-1)(n-1).
\end{align*}
\end{proposition}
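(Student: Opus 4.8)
The plan is to exhibit explicitly a family of vertex maps $\Box_m\to\Diamond_n$ and count them, without worrying about overlaps between the three families (they will be disjoint for dimension reasons). The key tool is Proposition \ref{calculus1}(a),(c): a map is a vertex map as soon as it sends vertices to vertices, and a rank-one map is a vertex map precisely when it is a facet projection onto an edge or a diagonal of the target. Accordingly I would build three families.

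First, the $2n$ constant maps with image a vertex of $\Diamond_n$; these are vertex maps by Theorem \ref{folklore}(d). Second, the rank-one facet projections. For each facet $F\in\FF(\Box_m)$ there is a facet projection $\pi_F:\Box_m\to[-1,1]$, and $\Box_m$ has $2m$ facets; composing $\pi_F$ with an affine isomorphism of $[-1,1]$ onto either an edge or a long diagonal of $\Diamond_n$ gives a rank-one vertex map by Proposition \ref{calculus1}(c). The diagonals of $\Diamond_n$ are the $n$ segments $[-e_i,e_i]$, and the edges are the $2\binom{n}{2}\cdot 2 = 2n(n-1)$ segments $[\pm e_i,\pm e_j]$ with $i\neq j$ — altogether $n + 2n(n-1) = n(2n-1)$ one-dimensional faces-or-diagonals, each realizable with two orientations of $[-1,1]$, i.e. $2n(2n-1)$ maps per facet $F$, but since reversing the orientation of $\pi_F$ is the same as choosing the opposite facet, the cleanest bookkeeping is: $2m$ facets times $n(2n-1)$ segments times $1$ (orientation absorbed), giving $2mn(2n-1)$ rank-one vertex maps. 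Third, rank-two maps obtained by "splitting" the cube along two independent coordinate directions. Pick two distinct coordinate directions $e_k,e_\ell$ of $\Box_m$ (there are $2\binom{m}{2}=m(m-1)$ ordered-then-halved, i.e. I will count $m(m-1)$ here with a factor absorbed), giving two independent facet projections $\pi_k,\pi_\ell:\Box_m\to[-1,1]$; then form $f = (\tau\circ\pi_k) + (\rho\circ\pi_\ell):\Box_m\to\Diamond_n$, where $\tau,\rho$ map $[-1,1]$ isomorphically onto a diagonal $[-e_i,e_i]$ and an edge emanating from one of its endpoints, chosen so that the pointwise sum lands in $\Diamond_n$ and sends every vertex of $\Box_m$ to a vertex of $\Diamond_n$. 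Counting: $n$ choices of diagonal $[-e_i,e_i]$, then $(n-1)$ choices of a second direction $e_j$ with $j\neq i$, with appropriate sign conventions contributing the remaining factor $2$, times the $m(m-1)$ pairs of cube-directions, yields $2mn(m-1)(n-1)$ such maps; each is a vertex map by Proposition \ref{calculus1}(a) since it sends $\vertex(\Box_m)$ into $\vertex(\Diamond_n)$.

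Summing the three disjoint families gives $2n + 2mn(2n-1) + 2mn(m-1)(n-1)$, the claimed lower bound. The maps in the three families are pairwise distinct because they have ranks $0$, $1$, and $2$ respectively, and within each family the geometric data (image vertex; facet of $\Box_m$ together with target edge/diagonal; pair of cube-facets together with the adjacent diagonal–edge pair in $\Diamond_n$) recovers the map.

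The main obstacle is purely combinatorial: getting the sign conventions and orientation choices in the rank-one and rank-two families to match the stated coefficients exactly, i.e. verifying that each listed piece of combinatorial data yields a genuine map into $\Diamond_n$ sending vertices to vertices, and that no data is double-counted. In particular, for the rank-two family one must check that $(\tau\circ\pi_k)(v)+(\rho\circ\pi_\ell)(v)\in\vertex(\Diamond_n)$ for every $v\in\vertex(\Box_m)$ — this forces $\tau$ to hit the diagonal $[-e_i,e_i]$ and $\rho$ to hit an edge $[\,\pm e_i,\pm e_j]$ sharing the endpoint that $\tau$ outputs, so that the two contributions combine to a single vertex $\pm e_j$ or the shared vertex $\pm e_i$; once this adjacency constraint is pinned down the count of admissible $(\tau,\rho)$ is exactly $2n(n-1)$ as needed. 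The rest is routine verification via Proposition \ref{calculus1}.
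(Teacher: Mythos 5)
Your rank-$0$ and rank-$1$ families are correct and are essentially the paper's: the constants give $2n$, and Proposition \ref{calculus1}(c) gives $2m{2n\choose 2}=2mn(2n-1)$ rank-one vertex maps, since every segment spanned by two distinct vertices of $\Diamond_n$ is an edge or a main diagonal and your facet/orientation bookkeeping is sound.

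The rank-$2$ family, however, has a genuine gap: the maps you describe do not exist. If $\tau$ maps $[-1,1]$ affinely onto a full diagonal, say $\tau(t)=te_i$, then for any $\rho$ the pointwise sum $f=\tau\circ\pi_k+\rho\circ\pi_\ell$ satisfies $f|_{\epsilon_k=1}-f|_{\epsilon_k=-1}=2e_i$ at every vertex of $\Box_m$; the only pair of vertices of $\Diamond_n$ differing by $2e_i$ is $\{e_i,-e_i\}$, so sending $\vertex(\Box_m)$ into $\vertex(\Diamond_n)$ would force $\rho\circ\pi_\ell$ to be constant, i.e.\ $\rank f=1$. Concretely, taking $\rho$ onto the edge $[e_i,e_j]$, the cube vertex with $\epsilon_k=1$ and $\rho(\epsilon_\ell)=e_i$ is sent to $2e_i\notin\Diamond_n$, so the adjacency constraint you invoke (``the two contributions combine to a single vertex'') can never be satisfied and Proposition \ref{calculus1}(a) is not applicable to your family. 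What is true is that a rank-two affine map factoring through two cube coordinates and sending vertices to vertices must have as image a parallelogram with vertices among $\pm e_1,\ldots,\pm e_n$, and one checks the only such parallelograms are the coordinate squares $\conv(\pm e_i,\pm e_j)$; the corresponding maps are the compositions $\Box_m\to\Box_2\to\Diamond_2\hookrightarrow\Diamond_n$ of a codimension-two face projection, one of the $8$ affine isomorphisms $\Box_2\to\Diamond_2$, and a coordinate embedding --- equivalently, pointwise sums whose two summands are segments in the directions $\frac12(e_i+e_j)$ and $\frac12(e_i-e_j)$, joining midpoints of opposite edges of that square, not a diagonal plus an edge. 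This is exactly the paper's construction and yields $8{m\choose 2}{n\choose 2}=2mn(m-1)(n-1)$, so your coefficient is right but your family is not; replacing your third family by these maps repairs the argument.
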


\begin{proof}
We have $\#\vertex^{(0)}(\Box_m,\Diamond_n)=\#\vertex(\Diamond_n)=2n$. By Proposition \ref{calculus1}(c), $\#\vertex^{(1)}(\Box_m,\Diamond_n)=2m{2n\choose2}=2mn(2n-1)$.

There are ${m\choose2}$ orthogonal projections of $\rho:\Box_m\to\Box_2$ along the codimension 2 faces of $\Box_m$, there are ${n\choose 2}$ isometric embeddings $\Diamond_2\to\Diamond_n$, and there are $8$ affine isomorphisms $\kappa:\Box_2\to\Diamond_2$. We have $8{m\choose2}{n\choose2}=2mn(m-1)(n-1)$ different rank 2 maps $\pi\kappa\rho:\Box_m\to\Diamond_n$, each mapping $\vertex(\Box_m)$ to $\vertex(\Diamond_n)$. By Proposition \ref{calculus1}(a), all these $\pi\kappa\rho$-s belong to $\vertex^{(2)}(\Box_m,\Diamond_n)$.
\end{proof}

The estimate in Proposition \ref{boxtodiamond} is far from optimal: \textsf{Polymake} computations yield $\#\vertex(\Box_3,\Diamond_4)=27968$, whereas the right hand side of the inequality in the proposition is just $316$. Currently we do not even have a conjectural description of the vertices of $\Hom(\Box_m,\triangle_n))$.

\bigskip\noindent\emph{Acknowledgement.} We thank Brian Cruz for computing $\beta(5)$ and the anonymous referee whose comments helped improving the paper.

\bibliography{references}
\bibliographystyle{plain}

\end{document}